\crefname{hypothesis}{Hypothesis}{Hypotheses}
\title{Fast IMEX Time Integration of Nonlinear Stiff Fractional Differential Equations\thanks{
\funding{This work is supported by NSFC (Grant No. 11971010), AFOSR YIP (award No. FA9550-17-1-0150), MURI/ARO (award No. W911NF-15-1-0562), ARO YIP (award No. W911NF-19-1-0444) and the NSF (award No. DMS-1923201). The work of the first author is supported by the China Scholarship Council under 201806160054.}}}
\author{Yongtao Zhou\thanks{School of Mathematics and Statistics, Huazhong University of Science and Technology, Wuhan 430074, China and Department of Mechanical Engineering, Michigan State University, East Lansing, MI 48824, USA
  (\email{yongtaozh@126.com}).}
\and Jorge L. Suzuki\thanks{Department of Mechanical Engineering, Michigan State University, East Lansing, MI 48824, USA
  (\email{suzukijo@msu.edu}, \email{suzukijo@egr.msu.edu}).}
\and Chengjian Zhang\thanks{School of Mathematics and Statistics, Huazhong University of Science and Technology, Wuhan 430074, China, and Hubei Key Laboratory of Engineering Modeling and Scientific Computing, Huazhong University of Science and Technology, Wuhan 430074, China
  (\email{cjzhang@mail.hust.edu.cn})}
  \and Mohsen Zayernouri\thanks{Department of Mechanical Engineering and Department of Statistics and Probability, Michigan State University, East Lansing, MI 48824, USA
 (\email{zayern@msu.edu}, \email{zayern@egr.msu.edu}), Corresponding Author.}}
\begin{document}

\maketitle

\begin{abstract}
Efficient long-time integration of nonlinear fractional differential equations is significantly challenging due to the integro-differential nature of the fractional operators. In addition, the inherent non-smoothness introduced by the inverse power-law kernels deteriorates the accuracy and efficiency of many existing numerical methods. We develop two efficient first- and second-order implicit-explicit (IMEX) methods for accurate time-integration of stiff/nonlinear fractional differential equations with fractional order $\alpha \in (0,1]$ and prove their convergence and linear stability properties. The developed methods are based on a linear multi-step fractional Adams-Moulton method (FAMM), followed by the extrapolation of the nonlinear force terms. In order to handle the singularities nearby the initial time, we employ Lubich-like corrections to the resulting fractional operators. The obtained linear stability regions of the developed IMEX methods are larger than existing IMEX methods in the literature. Furthermore, the size of the stability regions increase with the decrease of fractional order values, which is suitable for stiff problems. We also rewrite the resulting IMEX methods in the language of nonlinear Toeplitz systems, where we employ a fast inversion scheme to achieve a computational complexity of $\mathcal{O}(N \log N)$, where $N$ denotes the number of time-steps.  Our computational results demonstrate that the developed schemes can achieve global first- and second-order accuracy for highly-oscillatory stiff/nonlinear problems with singularities. 
\end{abstract}

\begin{keywords}
stiff/nonlinear fractional differential equations, IMEX methods, correction terms, convergence, linear stability, Toeplitz matrix
\end{keywords}

\begin{AMS}
26A33, 34A08, 65L05, 65L12, 65L20
\end{AMS}

\section{Introduction}
\label{section1}

Fractional differential equations (FDEs) have been widely applied in a variety of scientific fields, where the observed data presents the trademark of power-laws/heavy-tailed statistics across many length/time scales. Some applications include, \textit{e.g.}, anomalous models for bio-tissues \cite{Magin2010,Naghibolhosseini2015,Naghibolhosseini2018}, food rheology \cite{Chen2013,Faber2017,Jaishankar2013} and earth sciences \cite{Zhangy2017}. Regarding nonlinear FDEs for anomalous transport/materials, we outline fractional Navier-Stokes equations \cite{Zhouy2017}, fractional phase-field equations \cite{Song2016}, complex constitutive laws applied to structural problems undergoing large deformations/strains \cite{Suzuki2016} as well as nonlinear vibrations of beams \cite{Varghaei2019}.

Obtaining closed forms for linear FDEs can be challenging, especially for any general form of $f(t)$. In the few instances when the corresponding solution $u(t)$ is known, it is usually impractical to be numerically evaluated. Furthermore, obtaining analytical solutions becomes impossible in the presence of nonlinearities. Therefore, a series of numerical methods for FDEs were developed since the 80's, with several significant contributions summarized in Figure~\ref{fig:timeline}. In such schemes it is fundamental to incorporate the history effects arising from the fractional operators. The pioneering works are attributed to Lubich \cite{Lubich1983,Lubich1986}, on \textit{discretized fractional calculus} in the sense of fractional multi-step and finite-difference (FD) schemes. Later on, Tang \cite{Tang1993} developed a super-linear convergent FD scheme, followed by a numerical quadrature approach for fractional derivatives introduced by Diethelm \cite{Diethelm1997}. In the 2000's, Diethelm developed a predictor-corrector approach in addition to a fractional Adams method \cite{Diethelm2002,Diethelm2004}. Later on, Lin and Xu \cite{Lin2007} developed a FD discretization with order $2-\alpha$, which was applied to the time-fractional diffusion equation. More recently, Garrappa \cite{Garrappa2015} developed trapezoidal methods for fractional multi-step approaches and Zeng \cite{Zeng2015} developed a second-order scheme for time-fractional diffusion equations. Spectral methods were also developed in the context of FDEs/FPDEs \cite{Zayernouri2014FracDelay,Zayernouri_FODEs_2014,zayernouri2015-Variable,kharazmi2017sem,Samiee2016,samiee2017unified,kharazmi2017FSEM,Rodrigues2016,Suzuki2016Spectral,zhou2019Prec}, and distributed-order differential equations \cite{kharazmi2017petrov,kharazmi2018fractional,samiee2018petrov}. In particular, Zayernouri and Karniadakis \cite{Zayernouri_FODEs_2014} developed an exponentially-accurate spectral element method for FDEs and Lischke \textit{et al.} \cite{Lischke2017} developed a fast, \textit{tunably-accurate} spectral method.

\begin{figure}[t]
    \centering
    \includegraphics[width=\columnwidth]{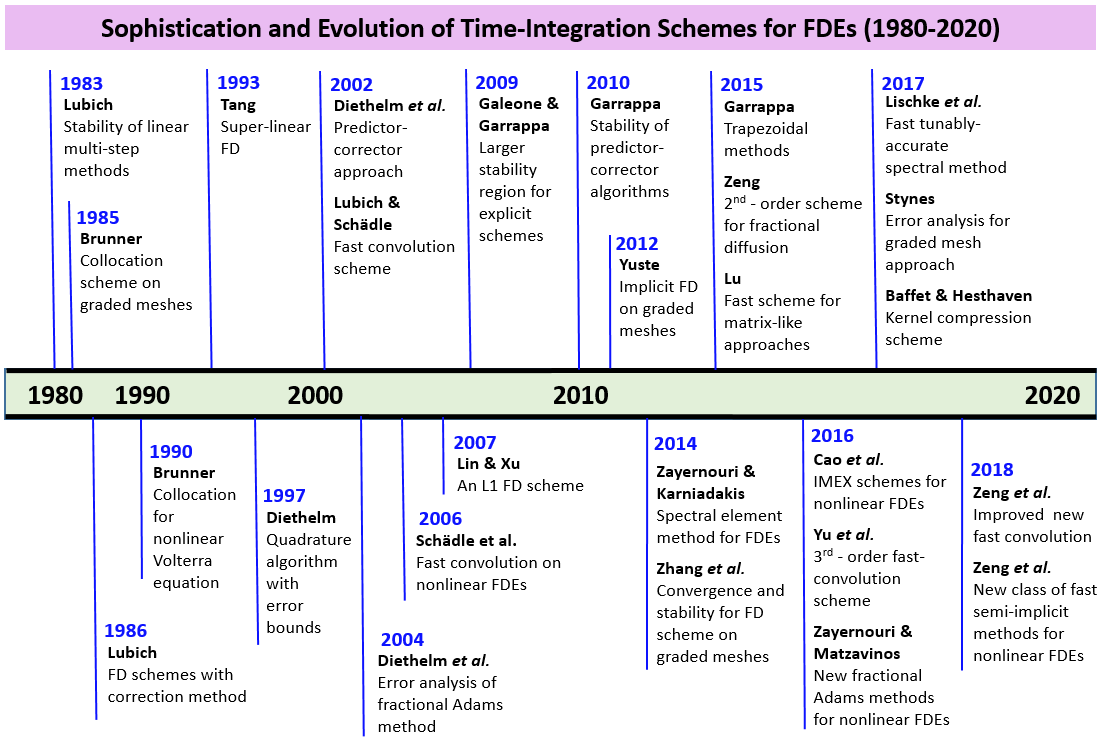}
    \caption{Research timeline on significant and diverse numerical schemes for time-fractional differential equations.\label{fig:timeline}}
\end{figure}

It is known that time-fractional operators possess power-law kernels with a singularity nearby the initial time, which produces non-smooth solutions that deteriorate the accuracy of many existing numerical schemes. In order to handle such problem, Lubich \cite{Lubich1986} introduced the \textit{so-called} correction method, which was later applied to a series of direct/multi-step schemes for linear/nonlinear FDEs \cite{Cao2016,Zeng2017CMAME,Zeng2018,Zeng2018stable}, and also employed in a \textit{self-singularity-capturing} approach by Suzuki and Zayernouri \cite{Suzuki2018SC}. In the aforementioned works, the correct determination of singularity powers leads to global high accuracy of the numerical schemes. The idea of \textit{graded meshes} was also introduced with the same objectives by Brunner \cite{Brunner1985}, who developed a spline collocation scheme for Volterra integro-differential equations, where the graded meshes correspond to non-uniform time-grids which are simple to incorporate in existing FD schemes. Graded meshes were later applied to nonlinear Volterra integral equations \cite{Brunner1990}. An implicit FD approach was developed in the context of graded meshes by Yuste \cite{Yuste2012}. More recently, stability issues of existing/new FD approaches were addressed by Zhang \textit{et al.} \cite{Zhang2014} for the time-fractional diffusion equation, and also by Stynes \cite{Stynes2017} for a reaction-diffusion problem, where, in the latter, an optimal mesh grading parameter was obtained. For a comparison between the performance of Lubich's corrections and graded meshes, we refer the readers to \cite{Zeng2018stable}.

The main computational challenge of direct FD schemes for time-integration of FDEs is the evaluation of the history term, which usually leads to a computational complexity of $\mathcal{O}(N^2)$ and memory storage of $\mathcal{O}(N)$, where $N$ represents the total number of time-steps. To address such issues, fast schemes were developed, starting with the first-order \textit{fast convolution method} by Lubich and Sch\"{a}dle \cite{Lubich2002}, which reduced the computational complexity to $\mathcal{O}(N \log N)$, and memory requirements to $\mathcal{O}(\log N)$. The main idea of the scheme is to approximate the power-law kernel \textit{via} numerical inverse Laplace transforms and split the integral operator (but not the time-grid) into exponentially increasing time steps. Later on, Sch\"{a}dle \textit{et al.} \cite{Schadle2006} extended the developed fast convolution for nonlinear FDEs. A third-order extension was developed by Yu \textit{et al.} \cite{Yu2016JCP} and applied to the time-integration for 3D simulation of a class of time-fractional PDEs. Zeng \textit{et al.} \cite{Zeng2018stable} developed an improved version of the fast-convolution approach, which considers real-valued integration contours with the order $3-\alpha$. Of particular interest, fast matrix-based schemes were also developed, such as the fast-inversion approach by Lu \cite{Lux2015} and the kernel compression method by Baffet and Hesthaven \cite{Baffet2017}. The main idea of such approaches is to represent the time-stepping equation in a global linear system and exploit the resulting Toeplitz-like structure through Fast-Fourier-Transforms (FFTs).

In addition to the aforementioned challenges, dealing with stiff/nonlinear problems further deteriorates the accuracy and might pose stability issues to existing numerical schemes. Fractional linear multi-step approaches become interesting alternatives to handle such issues. Diethelm \cite{Diethelm2002} developed a predictor-corrector scheme and later analyzed the error of a family of fractional Adams-Bashforth/Moulton schemes \cite{Diethelm2004}. Galeone and Garrappa studied the stability of implicit and explicit fractional multi-step methods \cite{Galeone2006,Galeone2009} and proposed new explicit schemes with larger stability regions. In addition, the stability analysis of fractional predictor-corrector schemes was studied by Garrappa \cite{Garrappa2010}. Also in the context fractional multi-step schemes, Zayernouri and Matzavinos \cite{Zayernouri2016MS} developed a family of fractional Adams schemes for high-order explicit/implicit treatment of nonlinear problems, where a particular time-splitting preserved the original structure of integer-order Adams schemes. Larger stability regions can be obtained through semi-implicit schemes, where for instance, Cao \textit{et al.} \cite{Cao2016} developed two IMEX schemes for nonlinear FDEs, utilizing two distinct force extrapolation formulas and also analyzed the stability of the developed schemes. Recently, Zeng \textit{et al.} \cite{Zeng2018} developed a new class of fast, second-order semi-implicit methods for nonlinear FDEs through new fast convolutions. Zhou and Zhang also developed and analyzed the convergence and stability of one-leg approaches \cite{zhou2019Stiff} and a class of boundary value methods and their block version \cite{zhou2019BBM,zhou2019BBVM} for stiff/nonlinear FDEs.

Although a significant amount of relevant works was developed, they usually address the aforementioned singularity/performance/stability issues for stiff/nonlinear problems separately. In this regard, there is still a need for numerical schemes in the context of stiff/nonlinear FDEs that \textbf{I)} efficiently handle the numerical solution with low-regularity for both the solution $u(t)$ and nonlinear term $f(t,\,u(t))$; \textbf{II)} present linear complexity with respect to the number of time-steps $N$; \textbf{III)} have larger stability regions compared to the existing numerical schemes; \textbf{IV)} mimick and generalize the structure of existing integer-order IMEX schemes, widely employed by the scientists and engineers to its fractional-order counterparts. The main contribution of the present work is to develop a class of IMEX methods for accurate time-integration of stiff/nonlinear FDEs. Specifically:
\begin{itemize}
    \item We start with the linear multi-step FAMMs developed by Zayernouri and Matzavinos \cite{Zayernouri2016MS} in the sense of linear/nonlinear fractional Cauchy equations. For the linear problem, two sets of Lubich-like correction terms \cite{Lubich1986} are employed.
    
    \item We develop a class of new first- and second-order IMEX methods with the combination of Zayernouri and Matzavinos \cite{Zayernouri2016MS} FAMMs with two extrapolation methods for the nonlinear term. The obtained methods are denoted by IMEX($p$), which is first-order accurate when $p=0$, and second-order accurate when $p=1$. 
    
    \item The convergence and linear stability of the developed IMEX methods are proved and the corresponding regions of stability are shown to be larger for smaller values of the fractional order $\alpha$.
    
    \item The convolution nature of the fractional operators allows us to represent the corresponding IMEX methods in the language of global-in-time Toeplitz-like nonlinear systems, and employ the fast approximate inversion approach by Lu \textit{et al.} \cite{Lux2015}. Since the Toeplitz system is nonlinear, we utilize a Picard iteration scheme which takes $N^p$ iterations until convergence with respect to a specified tolerance $\epsilon^p$. Under suitable conditions, $N^p$ does not significantly increase with $N$. 
    
    \item The corresponding \textit{history load} for the developed IMEX schemes is given by hypergeometric functions, which are efficiently evaluated through a Gauss-Jacobi quadrature with a fixed number of $Q$ integration points. 
    
    \item The asymptotic computational complexity of the scheme is $\mathcal{O}(N \log N)$, with memory storage of order $\mathcal{O}(N)$.
    
\end{itemize}

This paper is organized as follows: Section~\ref{section2} follows a step-by-step procedure, starting with linear multi-step FAMMs for linear FDEs, up to nonlinear FDEs, when the developed IMEX methods will be introduced. In Section~\ref{section3} we demonstrate the linear stability of the developed IMEX methods. In Section~\ref{section4} we put the corresponding IMEX methods in the language of a global nonlinear system of equations and employ a fast solver. The numerical results for linear/nonlinear/stiff FDEs with discussions are shown in Section~\ref{section5}, followed by the Conclusions in Section~\ref{section6}.

\section{Implicit-Explicit Time-Integration Methods}
\label{section2}

We develop two IMEX methods for efficient time-integration of nonlinear FDEs. In a step-by-step fashion, we start with the numerical solution of a fractional linear Cauchy problem, following the idea of FAMMs proposed by Zayernouri and Matzavions in \cite{Zayernouri2016MS}. To capture the singularity of the solution $u(t)$ of the considered problem, we then develop two sets of appropriate correction terms by using Lubich's approach \cite{Lubich1986} for the resulting fractional operators. As a next step, we introduce a nonlinear forcing term $f(t,u(t))$ and develop two IMEX methods for the solution of the resulting nonlinear Cauchy problem, which also introduces two additional sets of correction weights due to $f(t,u(t))$.

\subsection{Definitions}

We start with some preliminary definitions for fractional calculus (see e.g. \cite{Podlubny1999}). The left-sided Caputo fractional derivative of order $\alpha~(0 < \alpha < 1)$ is defined by
\begin{align}\label{2.1}
{}^C_{t_L} D_t^\alpha u(t) = {}_{t_L} I_t^{1-\alpha}u'(t) = \frac{1}{\Gamma(1-\alpha)} \int_{t_L}^t \frac{u'(v)}{(t-v)^{\alpha}} dv,~~~t > t_L,
\end{align}
where $\Gamma(\cdot)$ denotes the usual gamma function. The operator ${}_{t_L} I_t^{\alpha}$ represents the $\alpha$-th order $(0 <\alpha < 1)$ left-sided fractional Riemann-Liouville (RL) integral operator, defined as
\begin{align}\label{2.2}
{}_{t_L} I_t^\alpha u(t) = \frac{1}{\Gamma(\alpha)} \int_{t_L}^t \frac{u(v)}{(t-v)^{1-\alpha}} dv,~~~t > t_L.
\end{align}
The corresponding inverse operator of \eqref{2.2}, \textit{i.e.}, the left-sided Riemann-Liouville fractional derivative of order $\alpha$ is denoted by
\begin{align}\label{2.3}
{}^{RL}_{t_L} D_t^\alpha u(t) = \frac{d}{dt} \left[ {}_{t_L} I_t^{\alpha} u(t)\right] = \frac{1}{\Gamma(1-\alpha)} \frac{d}{dt} \int_{t_L}^t \frac{u(v)}{(t-v)^{\alpha}} dv,~~~t > t_L.
\end{align}
Moreover, the left-sided Caputo fractional derivative and the left-sided Riemann-Liouville fractional derivative are linked by the following relationship:
\begin{align}\label{2.4}
{}^{RL}_{t_L} D_t^\alpha u(t) = \frac{u(t_L)}{\Gamma(1-\alpha)(t-t_L)^{\alpha}} + {}^C_{t_L} D_t^\alpha u(t).
\end{align}

\subsection{Linear FDEs}
\label{sec:linear}

Consider the numerical solutions of the following linear FDE:
\begin{align}\label{2.5}
{}^C_{0} D_t^\alpha u(t) = \lambda u(t),~~\alpha \in (0,1],~t \in (0,T];~~~u(0)=u_0,
\end{align}
where $\lambda \in \mathbb{C}$, $u_0 \in \mathbb{R}^d$. Now, we adopt the FAMMs developed in \cite{Zayernouri2016MS} to solve \eqref{2.5}. Let $h>0$ be the time-step size with $h =  T/N~(N \in \mathbb{N})$ and $t_k = kh~(0\le k \le N)$. By the definition of the Caputo fractional derivative \eqref{2.1}, we can split the fractional operator in history and local parts:
\begin{align}\label{2.6}
{}^C_{0} D_t^\alpha u(t) = & \frac{1}{\Gamma(1-\alpha)} \int_{0}^{t_k} \frac{u'(v)}{(t-v)^{\alpha}} dv + \frac{1}{\Gamma(1-\alpha)} \int_{t_k}^{t} \frac{u'(v)}{(t-v)^{\alpha}} dv \nonumber\\
:= & H^k(t) + {}^C_{t_k} D_t^\alpha u(t).
\end{align}
Moreover, from \eqref{2.4}, we have
\begin{align}\label{2.7}
{}^C_{t_k} D_t^\alpha u(t) \!=\! {}^C_{t_k} D_t^\alpha [u(t) \!-\! u(t_k) \!+\! u(t_k)] \!=\! {}^C_{t_k} D_t^\alpha [u(t) \!-\! u(t_k)] \!=\! {}^{RL}_{t_k} D_t^\alpha [u(t) \!-\! u(t_k)].
\end{align}
Then, by substituting \eqref{2.6} and \eqref{2.7} into \eqref{2.5}, it holds that
\begin{align}\label{2.8}
{}^{RL}_{t_k} D_t^\alpha [u(t) - u(t_k)] = \lambda u(t) - H^k(t),~~~\alpha \in (0,1],~t \in (0,T].
\end{align}
Applying the inverse operator ${}_{t_k}I_t^\alpha$ on the both sides of \eqref{2.8} and evaluating at $t=t_{k+1}$, we obtain
\begin{align}\label{2.9}
u(t_{k+1})-u(t_k) = \left[ \lambda\, {}_{t_k}I_t^\alpha u(t) - \mathcal{H}^k (t)\right]\Big|_{t=t_{k+1}},
\end{align}
where $\mathcal{H}^k (t)$ denotes the \emph{history load term}, which is given by:
\begin{align}\label{2.10}
\mathcal{H}^k (t) = {}_{t_k}I_t^\alpha(H^k(t)) = \frac{1}{\Gamma(\alpha) \Gamma(1-\alpha)} \int_{t_k}^{t} \frac{1}{(t_{k+1}-v)^{1-\alpha}} \int_{0}^{t_k} \frac{u'(s)}{(v-s)^{\alpha}} ds dv.
\end{align}

We follow the FAMMs from \cite{Zayernouri2016MS} and interpolate $u(t)$ from ${}_{t_k}I_t^\alpha u(t)\Big|_{t=t_{k+1}}$ in an implicit fashion:
\begin{align}\label{2.11}
{}_{t_k}I_t^\alpha u(t) \Big|_{t=t_{k+1}} \approx h^\alpha \sum_{j=0}^p \beta_j^{(p)} u(t_{k+1-j}),~~~p=0,1,
\end{align}
with the following fractional Adams-Moulton coefficients, respectively, for $p=0$ and $p=1$,
\[\beta_0^{(0)} = \frac{1}{\Gamma(\alpha+1)};~~~\beta_0^{(1)} = \frac{1}{\Gamma(\alpha+2)},~~~\beta_1^{(1)} = \frac{\alpha}{\Gamma(\alpha+2)}.\]
Moreover, these coefficients recover the standard Adams-Moulton method's coefficients in the limit case when $\alpha=1$. To compute the history load term $\mathcal{H}^k (t_{k+1})$, on each small interval $[t_{j},t_{j+1}]~(0 \le j \le k-1)$, we linearly interpolate $u(t)$ when $p=0$, as follows:
\[\mit\Pi_{1,j} u(t) =\frac{t-t_{j+1}}{t_{j} - t_{j+1}}u(t_{j}) + \frac{t-t_j}{t_{j+1} - t_j}u(t_{j+1}),\]
therefore, we obtain the following form for the history load:
\begin{align}\label{2.12}
\mathcal{H}^k (t_{k+1}) \approx \mathcal{H}_{0}^{k} (t_{k+1}) = \frac{1}{\Gamma(\alpha) \Gamma(2-\alpha)} \sum_{j=0}^{k-1} \frac{u(t_{j+1}) - u(t_j)}{h} \left( \mathcal{A}_{k,j} - \mathcal{A}_{k,j+1}\right),
\end{align}
where
\begin{align}\label{2.13}
\mathcal{A}_{k,j} = & \int_{t_k}^{t_{k+1}} (t_{k+1}-v)^{\alpha-1} (v-t_j)^{1-\alpha} dv = h\int_{0}^{1} (1-\theta)^{\alpha-1} (k-j+\theta)^{1-\alpha} d\theta \nonumber\\
= & h\left\{ {\begin{array}{*{20}{l}}
\frac{(k-j)^{1-\alpha}}{\alpha}{}_2 F_1 \left(\alpha-1,1;\alpha+1;\frac{1}{j-k}\right),~~~&0 \le j < k,\\[3\jot]
\Gamma(\alpha)\Gamma(2-\alpha),&j=k,
\end{array}} \right.
\end{align}
in which ${}_2 F_1(a,b;c;z)$ denotes the hypergeometric function. Also, when $p=1$, we utilize quadratic interpolation function $\mit\Pi_{2,j} u(t)$ to approximate $u(t)$ on the interval $[t_j,t_{j+1}]~(0 \le j \le k-1)$ as follows:
\begin{align*}
\mit\Pi_{2,j} u(t) = &\frac{(t-t_{j})(t-t_{j+1})}{(t_{j-1}-t_{j})(t_{j-1}-t_{j+1})}u(t_{j-1}) + \frac{(t-t_{j-1})(t-t_{j+1})}{(t_{j}-t_{j-1})(t_{j}-t_{j+1})}u(t_{j}) \\
&+ \frac{(t-t_{j-1})(t-t_{j})}{(t_{j+1}-t_{j-1})(t_{j+1}-t_{j})}u(t_{j+1}),
\end{align*}
and therefore, the history load for the choice of $p=1$ is given by
\begin{align}\label{2.14}
\mathcal{H}^k & (t_{k+1}) \approx \mathcal{H}_{1}^k (t_{k+1}) = \mathcal{H}_{0}^k (t_{k+1}) \nonumber\\
& \!+\! \frac{1}{\Gamma(\alpha) \Gamma(2\!-\!\alpha)} \!\sum_{j=1}^{k-1} \!\frac{u(t_{j\!+\!1}) \!-\! 2u(t_j) \!+\! u(t_{j\!-\!1})}{h^2} \!\left[\! -\frac{h(\mathcal{A}_{k,j} \!+\! \mathcal{A}_{k,j\!+\!1})}{2} \!+\! \frac{\mathcal{B}_{k,j} \!-\! \mathcal{B}_{k,j\!+\!1}}{2-\alpha}\!\right],
\end{align}
where
\begin{align}\label{2.15}
\mathcal{B}_{k,j} = & \int_{t_k}^{t_{k+1}} (t_{k+1}-v)^{\alpha-1} (v-t_j)^{2-\alpha} dv = h^2\int_{0}^{1} (1-\theta)^{\alpha-1} (k-j+\theta)^{2-\alpha} d\theta \nonumber\\
= & h^2 \left\{ {\begin{array}{*{20}{l}}
\frac{(k-j)^{2-\alpha}}{\alpha}{}_2 F_1 \left(\alpha-2,1;\alpha+1;\frac{1}{j-k}\right),~~~&0 \le j < k,\\[3\jot]
\frac{\Gamma(\alpha)\Gamma(3-\alpha)}{2},&j=k.
\end{array}} \right.
\end{align}

Let $u_k$ be the approximate solution of $u(t_k)~(0 \le k \le N)$, and denote
\begin{align}\label{2.15*}
\mathcal{H}_p^{k} = \frac{1}{\Gamma(\alpha) \Gamma(2-\alpha)} \sum_{j=0}^{k} \gamma_{k,j}^{(p)} u_j,~~~p=0,1,
\end{align}
where the coefficients $\gamma_{k,j}^{(p)}$ are presented in Appendix \ref{appendixA}.
Then, we get the FAMMs for \eqref{2.9} have the following discrete form:
\begin{align}\label{2.16}
\frac{u_{k+1} - u_k}{h^\alpha} = \lambda \sum_{j=0}^p \beta_j^{(p)} u_{k+1-j} - \frac{1}{h^\alpha}\mathcal{H}_p^{k},~~~p=0, 1.
\end{align}
In order to lay the analytical basis for the convergence and linear stability analysis of the methods, we introduce several preparatory results through a series of lemmas, with their corresponding proofs given in Appendix \ref{appendix}.

\begin{lemma}\label{lemma2.3}
The coefficients $\big\{ \gamma_{k,j}^{(p)} \big\}$ in \eqref{2.15*} have the following properties for $0 \le j \le k$, $0 \le k \le N$ and $p=0,1$:
\begin{itemize}
\item $\gamma_{k,j}^{(0)} < 0~(0 \le j \le k-1),~0<\gamma_{k,k}^{(0)} \le C_1$;
\vskip 0.1 cm
\item $\gamma_{k,j}^{(1)} < 0~(0 \le j \le k-3),~\left|\gamma_{k,j}^{(1)}\right| \le C_1~(j=k-2,~k-1~\mbox{or}~k)$;
\vskip 0.1 cm
\item $\sum\limits_{j=0}^k \gamma_{k,j}^{(0)} = 0,~\sum\limits_{j=0}^k \gamma_{k,j}^{(1)} = 0$,
\end{itemize}
where $C_1 > 0$ is a constant.
\end{lemma}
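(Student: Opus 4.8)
The plan is to reduce every coefficient $\gamma_{k,j}^{(p)}$ to a finite difference, in the index $j$, of the kernel integrals $\mathcal{A}_{k,j}$ and $\mathcal{B}_{k,j}$, and then to read off its sign from the sign of a single derivative of a power function. I would start from the integral forms in \eqref{2.13} and \eqref{2.15}, writing $\mathcal{A}_{k,j}=h\,g(k-j)$ and $\mathcal{B}_{k,j}=h^{2}\phi(k-j)$ with $g(x)=\int_0^1(1-\theta)^{\alpha-1}(x+\theta)^{1-\alpha}\,d\theta$ and $\phi(x)=\int_0^1(1-\theta)^{\alpha-1}(x+\theta)^{2-\alpha}\,d\theta$, the diagonal rows $j=k$ being the Beta values $g(0)=\Gamma(\alpha)\Gamma(2-\alpha)$ and $\phi(0)=\Gamma(\alpha)\Gamma(3-\alpha)/2$. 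The bracketed factors in \eqref{2.12} and \eqref{2.14} are first and second differences of the nodal values, so a discrete summation by parts (Abel summation) transfers those differences onto the kernels: away from the right endpoint $\gamma_{k,j}^{(0)}$ becomes the centered second difference $g(k-j+1)-2g(k-j)+g(k-j-1)$, and $\gamma_{k,j}^{(1)}$ becomes a fixed linear combination of second and third differences of $g(k-j)$ and $\phi(k-j)$; this matches the explicit expressions in Appendix \ref{appendixA}.

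The two sum rules come for free from this difference structure. Since every bracket in \eqref{2.12} and \eqref{2.14} is a first or second difference of $u_j$, substituting the constant sequence $u_j\equiv1$ annihilates each bracket and forces $\mathcal{H}_p^{k}=0$; comparing with \eqref{2.15*} gives $\sum_{j=0}^{k}\gamma_{k,j}^{(p)}=0$ for $p=0,1$. Equivalently, the coefficient sums telescope.

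For the signs I would invoke the Peano-kernel (cardinal B-spline) representation: for $\psi\in C^{n}$ the unit-step $n$-th forward difference $\Delta^{n}\psi$ equals the integral of $\psi^{(n)}$ against a nonnegative kernel, so $\Delta^{n}\psi$ inherits the sign of $\psi^{(n)}$ whenever the latter is single-signed. This settles $p=0$ completely. Interchanging the difference with the $\theta$-integral, $\gamma_{k,j}^{(0)}$ equals, up to the positive prefactor, the $\theta$-integral of the centered second difference of $(\,\cdot+\theta)^{1-\alpha}$; because $1-\alpha\in(0,1)$ the second derivative $(1-\alpha)(-\alpha)(\,\cdot\,)^{-1-\alpha}$ is strictly negative, so $\gamma_{k,j}^{(0)}<0$ for $1\le j\le k-1$. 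The endpoint $j=0$ is a single first difference $g(k-1)-g(k)<0$ (as $g$ is increasing, $g'>0$), and $\gamma_{k,k}^{(0)}=g(1)-g(0)$ is a fixed positive constant, which simultaneously gives the bound $\gamma_{k,k}^{(0)}\le C_1$.

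The genuinely delicate case, and the step I expect to be the main obstacle, is the interior sign for $p=1$ on $0\le j\le k-3$. The difficulty is twofold: the $g$- and $\phi$-contributions carry power functions of different exponents, $1-\alpha$ and $2-\alpha$, whose third derivatives $\,(1-\alpha)(-\alpha)(-1-\alpha)(\,\cdot\,)^{-2-\alpha}>0\,$ and $\,(2-\alpha)(1-\alpha)(-\alpha)(\,\cdot\,)^{-1-\alpha}<0\,$ have opposite signs, so no single power function controls the combination; moreover a short asymptotic check shows that the leading $m^{1-\alpha}$ terms cancel in $\gamma_{k,j}^{(1)}$ with $m=k-j$, so the sign is decided by a subleading term. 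My plan is to collect the whole expression under one integral $\int_0^1(1-\theta)^{\alpha-1}[\,\cdots\,]\,d\theta$ and prove that the bracketed finite-difference combination is strictly negative pointwise in $\theta\in(0,1)$ for every admissible $m\ge3$ (including the far-history left end $j=0,1$, where the stencil is boundary-adjusted); applying the B-spline representation to each power function reduces this to a sign statement for a fixed combination of $(\,\cdot\,)^{-1-\alpha}$ and $(\,\cdot\,)^{-\alpha}$ kernels, which I would establish by a uniform large-$m$ asymptotic expansion with sign-definite leading coefficient together with monotonicity in $m$, verifying the finitely many small-$m$ values directly. Finally, the three right-endpoint coefficients $j=k-2,k-1,k$ ($m=2,1,0$) are fixed finite combinations of the Beta-type constants $g(0),g(1),g(2),\phi(0),\phi(1),\phi(2)$, independent of $k$, $h$ and $N$; bounding each in absolute value and taking $C_1$ to be the maximum of these (together with $g(1)-g(0)$) yields $|\gamma_{k,j}^{(1)}|\le C_1$ and completes the proof.
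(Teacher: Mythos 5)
Your overall framework is sound and, where it is complete, it coincides with the paper's proof: the sum rules via exactness for the constant sequence are argued identically; your treatment of $p=0$ (second differences of $g$, with concavity of $x^{1-\alpha}$ giving the interior sign, monotonicity of $g$ at $j=0$, and the fixed constant $g(1)-g(0)$ at $j=k$) is a clean variant of the paper's argument, which extracts a fixed $\theta=\tilde{\theta}$ by the first mean value theorem for integrals and then works with the sequences $a_l^{(\alpha)}$, $b_l^{(\alpha)}$, $c_l^{(\alpha)}$ of Lemma \ref{lemmaA.1} instead of keeping the $\theta$-integral as you do; and your observation that the $j=k-2,k-1,k$ coefficients for $p=1$ are fixed, $k$-independent combinations of $g$ and $\phi$ values gives the bound $|\gamma_{k,j}^{(1)}|\le C_1$ just as the paper's bounds $|c_l^{(\alpha)}|\le v_0$ do.

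The gap is exactly where you flagged it, and it is not closed: the interior negativity $\gamma_{k,j}^{(1)}<0$ for $0\le j\le k-3$ is asserted via a program, not proved. In your notation the interior coefficient is $\tfrac{1}{2}\Delta^3 g(m-2)+\tfrac{1}{2-\alpha}\Delta^3\phi(m-2)$ with $m=k-j$; the $g$-part is positive and decays like $m^{-2-\alpha}$, the $\phi$-part is negative and decays like $m^{-1-\alpha}$, so the sign is correct asymptotically, but the lemma needs it for \emph{every} $m\ge 3$ (plus the boundary-adjusted stencils at $j=0,1$), uniformly in $\theta\in[0,1]$ and $\alpha\in(0,1)$. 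Equivalently, in the paper's notation, one must show $c_2^{(\alpha)}>c_3^{(\alpha)}>\cdots>c_{k-1}^{(\alpha)}>0$ for all such $\theta$ and $\alpha$. Your proposed route --- a uniform large-$m$ expansion with sign-definite leading coefficient, plus monotonicity in $m$, plus checking the remaining small $m$ directly --- leaves all three ingredients unestablished: the threshold where the asymptotics take over depends on $\alpha$ (and $\theta$), so the ``finitely many small-$m$ values'' are really a two-parameter family of inequalities over $(\alpha,\theta)\in(0,1]\times[0,1]$, not a finite verification; and the monotonicity in $m$ you invoke is itself a claim of the same difficulty as the one being proved. This positivity/monotonicity of the $c_l^{(\alpha)}$ sequence is the mathematical core of the lemma for $p=1$. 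Notably, the paper does not reprove it from scratch either: it isolates these facts in Lemma \ref{lemmaA.1} and refers their proofs to Lemmas 2.1--2.2 of \cite{Gao2014} (the L1-2 discretization analysis), which is precisely the piece your write-up would need to either cite or reproduce in order to be complete.
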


\begin{lemma}\label{lemma2.1}
Let $u(t) = t^\sigma~(\sigma \ge 0)$. Then there exists a constant $C_2 > 0$ independent of $h$ such that
\begin{align*}
\bigg| {}_{t_k}I_t^\alpha u(t) \Big|_{t=t_{k+1}} - h^\alpha \sum_{j=0}^p \beta_j^{(p)} u(t_{k+1-j}) \bigg| \le C_2 h^{\alpha+p+1} t_{k+1}^{\sigma-p-1}.
\end{align*}
\end{lemma}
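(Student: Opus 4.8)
The plan is to read the right-hand side of \eqref{2.11} as the \emph{exact} Riemann--Liouville integral of the polynomial that interpolates $u$ on the \emph{local} interval $[t_k,t_{k+1}]$, and then to estimate the resulting quadrature remainder. Since ${}_{t_k}I_t^\alpha u(t)\big|_{t=t_{k+1}}=\frac{1}{\Gamma(\alpha)}\int_{t_k}^{t_{k+1}}(t_{k+1}-v)^{\alpha-1}u(v)\,dv$, I would first verify, by the substitution $w=t_{k+1}-v$, that the coefficients $\beta_j^{(p)}$ are precisely those for which $h^\alpha\sum_{j=0}^p\beta_j^{(p)}u(t_{k+1-j})$ equals the RL integral of the degree-$p$ interpolant $\Pi_p u$ of $u$ on $[t_k,t_{k+1}]$ (the constant value at $t_{k+1}$ when $p=0$, and the linear interpolant through $t_k,t_{k+1}$ when $p=1$). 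This identifies the rule as exact for polynomials of degree $\le p$, and yields the remainder representation
\begin{align*}
E_k := {}_{t_k}I_t^\alpha u(t)\Big|_{t=t_{k+1}} - h^\alpha\sum_{j=0}^p\beta_j^{(p)}u(t_{k+1-j}) = \frac{1}{\Gamma(\alpha)}\int_{t_k}^{t_{k+1}}(t_{k+1}-v)^{\alpha-1}\big[u(v)-\Pi_p u(v)\big]\,dv,
\end{align*}
which is the object I would bound in two regimes.

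For $k\ge1$ the function $u(t)=t^\sigma$ is $C^\infty$ on $[t_k,t_{k+1}]$ because $t_k\ge h>0$, so the classical interpolation remainder applies: $u(v)-\Pi_p u(v)=\frac{u^{(p+1)}(\xi_v)}{(p+1)!}\prod_i(v-\tau_i)$ with nodes $\tau_i\in\{t_k,t_{k+1}\}$. The node product is bounded by $h^{p+1}$, and since $u^{(p+1)}(t)=\sigma(\sigma-1)\cdots(\sigma-p)\,t^{\sigma-p-1}$ one gets $\sup_{[t_k,t_{k+1}]}|u^{(p+1)}|\le C\,t_{k+1}^{\sigma-p-1}$; the only care here is the sign of $\sigma-p-1$, which I would dispatch using $t_k\le t_{k+1}\le 2t_k$ to absorb $t_k^{\sigma-p-1}$ into a bounded multiple of $t_{k+1}^{\sigma-p-1}$. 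Pulling these estimates out of the integral and using $\int_{t_k}^{t_{k+1}}(t_{k+1}-v)^{\alpha-1}\,dv=h^\alpha/\alpha$ gives $|E_k|\le C_2\,h^{\alpha+p+1}\,t_{k+1}^{\sigma-p-1}$, exactly as claimed.

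The main obstacle is the first interval $k=0$, where $u(t)=t^\sigma$ need not be $C^{p+1}$ at the origin (indeed $t^{\sigma-p-1}$ is singular when $\sigma<p+1$), so the remainder formula above is unavailable. Here I would instead evaluate $E_0$ in closed form, using the exact monomial identity ${}_0I_t^\alpha t^\sigma=\frac{\Gamma(\sigma+1)}{\Gamma(\sigma+\alpha+1)}t^{\sigma+\alpha}$ together with the explicit quadrature values $u(t_1)=h^\sigma$ and, for $\sigma>0$, $u(t_0)=0$ (the case $\sigma=0$, and more generally any integer $\sigma\in\{0,\dots,p\}$, makes the interpolation exact, so $E_0=0$). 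This produces $E_0=h^{\sigma+\alpha}\big[\tfrac{\Gamma(\sigma+1)}{\Gamma(\sigma+\alpha+1)}-\beta_0^{(p)}\big]$, a fixed constant times $h^{\sigma+\alpha}=h^{\alpha+p+1}t_1^{\sigma-p-1}$, which again matches the desired bound. Taking $C_2$ as the maximum of the constants arising in the two regimes completes the argument.
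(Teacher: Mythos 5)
Your proposal is correct, and its core is the same as the paper's proof: both identify $h^\alpha\sum_{j=0}^p\beta_j^{(p)}u(t_{k+1-j})$ as the exact Riemann--Liouville integral over $[t_k,t_{k+1}]$ of the degree-$p$ interpolant of $u$ at the nodes $t_{k+1-j}$, write the error as $\frac{1}{\Gamma(\alpha)}\int_{t_k}^{t_{k+1}}(t_{k+1}-v)^{\alpha-1}\bigl[u(v)-\Pi_p u(v)\bigr]dv$, and invoke the Lagrange interpolation remainder together with $\int_{t_k}^{t_{k+1}}(t_{k+1}-v)^{\alpha-1}dv=h^\alpha/\alpha$. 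Where you genuinely depart from the paper is in the treatment of the first interval $k=0$: the paper applies the mean-value form $u^{(p+1)}(\varsigma)$, $\varsigma\in(t_k,t_{k+1})$, uniformly in $k$ and then bounds $|u^{(p+1)}(\varsigma)|\le C\,t_{k+1}^{\sigma-p-1}$, a step that is not justified on $[0,h]$ when $\sigma<p+1$ is non-integer, since $u^{(p+1)}(\varsigma)=\sigma(\sigma-1)\cdots(\sigma-p)\varsigma^{\sigma-p-1}$ blows up as $\varsigma\to 0$ and cannot be dominated by $t_1^{\sigma-p-1}$. Your closed-form evaluation $E_0=h^{\sigma+\alpha}\bigl[\tfrac{\Gamma(\sigma+1)}{\Gamma(\sigma+\alpha+1)}-\beta_0^{(p)}\bigr]=\mathcal{O}\bigl(h^{\alpha+p+1}t_1^{\sigma-p-1}\bigr)$ patches exactly this gap, and your bound $\sup_{[t_k,t_{k+1}]}|u^{(p+1)}|\le C\,t_{k+1}^{\sigma-p-1}$ via $t_{k+1}\le 2t_k$ (for $k\ge 1$) is also cleaner than the paper's pulling of a single $v$-dependent point $\varsigma$ out of the integral. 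In short, your argument buys rigor at the initial, singular interval at the cost of one explicit monomial computation, while the paper's proof is shorter but silently assumes smoothness it does not have near $t=0$.
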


\begin{lemma}\label{lemma2.2}
Let $u(t) = t^\sigma~(\sigma \ge 0)$. Then there exists a constant $C_3 > 0$ independent of $h$ such that
\begin{align*}
\left|\mathcal{H}^k (t_{k+1}) - \mathcal{H}_{p}^k (t_{k+1})\right| \le C_3 \left(h^{\sigma+\alpha+1} t_{k + 1}^{-\alpha-1} + h^{\alpha+p+1} t_{k+1}^{\sigma-\alpha-p-1} + h^{p+1} \right).
\end{align*}
\end{lemma}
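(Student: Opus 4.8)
The plan is to interpret the quadrature error as a weighted double integral of the pointwise interpolation error and then estimate it region by region. First I would recall that $\mathcal{H}_p^k(t_{k+1})$ is obtained from the exact history load \eqref{2.10} by replacing $u$ on the history window $[0,t_k]$ by the piecewise-polynomial interpolant used in \eqref{2.12} and \eqref{2.14} (piecewise linear for $p=0$; piecewise quadratic, with a linear piece on $[0,h]$, for $p=1$); denote this interpolant by $\Pi_p u$ and set $e = u - \Pi_p u$. Substituting $(\Pi_p u)'$ for $u'$ in the inner integral of \eqref{2.10} gives
\begin{equation*}
\mathcal{H}^k(t_{k+1}) - \mathcal{H}_p^k(t_{k+1}) = \frac{1}{\Gamma(\alpha)\Gamma(1-\alpha)} \int_{t_k}^{t_{k+1}} (t_{k+1}-v)^{\alpha-1} \int_0^{t_k} \frac{e'(s)}{(v-s)^{\alpha}} \, ds \, dv .
\end{equation*}

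The key step is an integration by parts in $s$. Since $\Pi_p u$ matches $u$ at every grid node, $e$ is continuous on $[0,t_k]$ and vanishes at all $t_j$, so integrating by parts on each subinterval makes every boundary contribution telescope to zero, leaving
\begin{equation*}
\int_0^{t_k} \frac{e'(s)}{(v-s)^{\alpha}} \, ds = -\alpha \int_0^{t_k} \frac{e(s)}{(v-s)^{\alpha+1}} \, ds .
\end{equation*}
This trades the unbounded factor $u' \sim s^{\sigma-1}$ for the node-vanishing interpolation error $e$, at the price of the more singular kernel $(v-s)^{-\alpha-1}$; the boundary terms at $s=0$ and $s=t_k$ are legitimately zero for every $\sigma \ge 0$.

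Next I would bound $|e|$ by classical interpolation estimates, $|e(s)| \le C h^{p+2} |u^{(p+2)}(\xi_s)| \sim h^{p+2} s^{\sigma-p-2}$, valid on subintervals bounded away from the origin, and split $\int_0^{t_k} = \int_0^{t_{p+1}} + \int_{t_{p+1}}^{t_k}$. On the near-origin piece $[0,t_{p+1}]$, where $s^{\sigma-p-2}$ fails to be integrable for small $\sigma$, I would instead use the safe bound $|e(s)| \le C \max_{[0,t_{p+1}]} |u| \le C h^{\sigma}$, then evaluate elementary integrals with the help of $\int_{t_k}^{t_{k+1}} (t_{k+1}-v)^{\alpha-1} dv = h^\alpha/\alpha$ and the mean-value estimate $(v-t_{p+1})^{-\alpha} - v^{-\alpha} \le C h (v-t_{p+1})^{-\alpha-1}$; this produces the term $h^{\sigma+\alpha+1} t_{k+1}^{-\alpha-1}$. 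On the bulk piece I would insert $|e(s)| \le C h^{p+2} s^{\sigma-p-2}$ and compare the subinterval sum with $\int s^{\sigma-p-2}(v-s)^{-\alpha-1} ds$, treating separately the subintervals adjacent to $s=t_k$, where the kernel is most singular and the exact identity $\int_{t_k}^{t_{k+1}} (t_{k+1}-v)^{\alpha-1}(v-t_k)^{-\alpha} dv = \Gamma(\alpha)\Gamma(1-\alpha)$ contributes at $\mathcal{O}(1)$; together these are controlled by $h^{\alpha+p+1} t_{k+1}^{\sigma-\alpha-p-1}$. The remaining term $h^{p+1}$ absorbs the direct treatment of the first few steps and the reduced-order (linear) piece on $[0,h]$ in the $p=1$ scheme.

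The main obstacle is the competition between the two singularities -- $s^{\sigma-p-2}$ as $s \to 0^+$ and $(v-s)^{-\alpha-1}$ as $s \to t_k^-$ with $v \to t_k^+$ -- which dictates both the domain decomposition and the bookkeeping of which region dominates for each range of $\sigma$ relative to $p+1$ and $p+2$. A secondary technicality is that the mean-value estimates above degenerate for small $k$ (when $v - t_{p+1}$ may vanish), so the first few time steps must be checked directly; there $t_{k+1} \sim h$ and one verifies that the claimed bound reduces to $\mathcal{O}(h^{\min(\sigma,\,p+1)})$, which is readily matched.
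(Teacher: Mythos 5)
Your proposal is correct, and it takes a genuinely different route from the paper's proof. Both arguments start from the same representation of the error as a double integral of the interpolation-error derivative against the kernel $(t_{k+1}-v)^{\alpha-1}(v-s)^{-\alpha}$, and both use integration by parts to tame the singularity of $u'(s)=\sigma s^{\sigma-1}$ at the origin. The paper, however, integrates by parts only on the first subinterval $[0,t_1]$ and keeps the error in derivative form elsewhere: it splits the history window into three zones $[0,t_1]$, $[t_1,t_{\hat k}]$ and $[t_{\hat k},t_k]$ with an intermediate index $\hat k$ specified only as ``suitable''; the middle zone is estimated via the derivative-form interpolation error $\frac{2s-t_j-t_{j+1}}{2}u''(\xi_j)$, producing the $h^{\alpha+p+1}t_{k+1}^{\sigma-\alpha-p-1}$ term, and it is the tail zone $[t_{\hat k},t_k]$ that generates the $h^{p+1}$ term. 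You instead integrate by parts globally --- legitimate precisely because the error $e$ vanishes at every node, so all boundary terms telescope --- converting everything to function values of $e$ against the supercritical kernel $(v-s)^{-\alpha-1}$; this permits a simpler two-zone split at $t_{p+1}$, with the sup bound $|e|\le C h^{\sigma}$ near the origin and the pointwise bound $C h^{p+2}s^{\sigma-p-2}$ in the bulk, the endpoint singularity at $s=t_k$ being neutralized by the node-vanishing of $e$ together with the beta identity you quote. What each buys: your route is more unified (one integration by parts, no floating index $\hat k$, and $p=0,1$ handled on the same footing), and it is in fact slightly sharper, since your endpoint contribution $h^{p+2}t_k^{\sigma-p-2}$ is already dominated by $h^{\alpha+p+1}t_{k+1}^{\sigma-\alpha-p-1}$, so away from the first few steps the $h^{p+1}$ term is never needed, whereas in the paper it is intrinsic to the tail estimate; the paper's route, in exchange, never has to control the stronger singularity $(v-s)^{-\alpha-1}$ away from the origin. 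Two small bookkeeping remarks on your sketch: when $\sigma<p+1$, the portion of your bulk integral near its lower limit $t_{p+1}$ is controlled by the first term $h^{\sigma+\alpha+1}t_{k+1}^{-\alpha-1}$ rather than by the second one as written (immaterial, since both appear in the final bound), and the borderline case $\sigma=p+1$ produces a logarithmic factor that must be absorbed using the boundedness of $(h/t_k)\log(t_k/h)$.
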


\subsection{Correction Terms}

It is well-known that the solutions of \eqref{2.5} usually exhibit weak singularity at the initial time. Hence, the optimal convergence rates of the above discussed numerical methods cannot be achieved (see Lemma \ref{lemma2.1} and \ref{lemma2.2}). To improve the accuracy near the initial time, we follow Lubich's idea (cf. \cite{Lubich1986}) by adding correction terms to the resulting fractional operators of \eqref{2.9}. The correction scheme assumes that the solution $u(t)$ of \eqref{2.5} has the form (\textit{see e.g. \cite{Cao2016,Diethelm2004,Zeng2017CMAME} for more discussions on the regularity of FDEs}):
\begin{align}\label{2.17}
u(t) - u(0) = \sum_{r=1}^{m+1} d_r t^{\sigma_r} + \zeta(t) t^{\sigma_{m +2}},~~~0 < \sigma_r < \sigma_{r+1},
\end{align}
where $d_r \in \mathbb{R}$ are some constants, $m$ is a positive integer and $\zeta(t)$ is a uniformly continuous function for $t \in [0,T]$. The term $\left\{\sigma_r \right\}$ represents positive correction powers. We will now introduce the correction approach for each term of the right-hand-side of \eqref{2.9}.

\textbf{I)} We start with the term ${}_{t_k}I_t^\alpha u(t) \Big|_{t=t_{k+1}}$. It follows from Lemma \ref{lemma2.1} that
\begin{align}\label{2.18}
{}_{t_k}I_t^\alpha u(t) \Big|_{t=t_{k+1}} = & {}_{t_k}I_t^\alpha \left(u(t) - u(0)\right) \Big|_{t=t_{k+1}} + {}_{t_k}I_t^\alpha u(0) \Big|_{t=t_{k+1}} \nonumber\\
= & h^\alpha \sum_{j=0}^p \beta_j^{(p)} \left(u(t_{k+1-j}) - u_0\right) + h^\alpha \sum_{j=1}^{m_u} W_{k,j}^{(\alpha,\sigma,p)} \left(u(t_{j}) - u_0\right) \nonumber\\
& + \frac{h^\alpha}{\Gamma(\alpha + 1)} u_0 + R_{k+1}^u \nonumber\\
= &  h^\alpha \sum_{j=0}^p \beta_j^{(p)} u(t_{k+1-j}) + h^\alpha \sum_{j=1}^{m_u} W_{k,j}^{(\alpha,\sigma,p)} \left(u(t_{j}) - u_0\right) + R_{k+1}^u \nonumber\\
 := & h^\alpha I_{h,m_u}^{\alpha,\sigma,p} u(t_{k+1}) + R_{k+1}^u,
\end{align}
with $R_{k+1}^u = \mathcal{O}(h^{\alpha+p+1} t_{k+1}^{\sigma_{m_u+1}-p-1})$. The correction weights $\left\{W_{k,j}^{(\alpha,\sigma,p)}\right\}$ are chosen such that \eqref{2.18} is exact for $u(t) = t^{\sigma_r}~(1 \le r \le m_u)$, and therefore are obtained through the following linear system of order $m_u \times m_u$ to be solved for $k = 0,1,\ldots,N-1$ time steps:
\begin{align}\label{2.19}
\sum_{j=1}^{m_u} W_{0,j}^{(\alpha,\sigma,p)} j^{\sigma_r} = \frac{\Gamma(\sigma_r+1)}{\Gamma(\sigma_r+\alpha+1)} - \sum_{j=0}^p \beta_j^{(p)} (1-j)^{\sigma_r},~~~1 \le r \le m_u,
\end{align}
\begin{align}\label{2.20}
\sum_{j=1}^{m_u} W_{k,j}^{(\alpha,\sigma,p)} j^{\sigma_r} =& \frac{k^{\sigma_r}}{\Gamma(\alpha+1)} {}_2F_1\left(-\sigma_r,1;\alpha+1;-\frac{1}{k}\right) - \sum_{j=0}^p \beta_j^{(p)} (k+1-j)^{\sigma_r}, \nonumber\\
&1 \le k \le N-1,~~~1 \le r \le m_u.
\end{align}

\textbf{II)} For the history load $\mathcal{H}^k (t_{k+1})$, we introduce the correction terms as follows. Substituting \eqref{2.17} into \eqref{2.10} and using Lemma \ref{lemma2.2} yield
\begin{align}\label{2.21}
\mathcal{H}^k (t_{k+1}) = & \frac{1}{\Gamma(\alpha) \Gamma(1-\alpha)} \int_{t_k}^{t_{k+1}} \frac{1}{(t_{k+1}-v)^{1-\alpha}} \int_{0}^{t_k} \frac{\left[u(s) - u(0)\right]'}{(v-s)^{\alpha}} ds dv \nonumber\\
& + \frac{1}{\Gamma(\alpha) \Gamma(1-\alpha)} \int_{t_k}^{t_{k+1}} \frac{1}{(t_{k+1}-v)^{1-\alpha}} \int_{0}^{t_k} \frac{u'(0)}{(v-s)^{\alpha}} ds dv \nonumber\\
= &\frac{1}{\Gamma(\alpha) \Gamma(2-\alpha)} \sum_{j=0}^{k} \gamma_{k,j}^{(p)} u(t_j) + \sum_{j=1}^{\tilde{m}_u} \tilde{W}_{k,j}^{(\alpha,\sigma,p)} \left(u(t_{j}) - u_0\right) + \tilde{R}_{k+1}^u \nonumber\\
:= & \mathcal{H}_{\tilde{m}_u}^{\alpha,\sigma,p} u(t_{k+1}) + \tilde{R}_{k+1}^u,
\end{align}
with $\tilde{R}_{k+1}^u = \mathcal{O} (h^{\sigma_{\tilde{m}_u+1}+\alpha+1} t_{k + 1}^{-\alpha-1}) + \mathcal{O} (h^{\alpha+p+1} t_{k+1}^{\sigma_{\tilde{m}_u+1}-\alpha-p-1}) + \mathcal{O}(h^{p+1})$. The history load correction weights $\left\{\tilde{W}_{k,j}^{(\alpha,\sigma,p)}\right\}$ are chosen such that \eqref{2.21} is exact for $u(t) = t^{\sigma_r}~(1 \le r \le m_{\tilde{u}})$. However, we remark that it is very difficult to obtain the analytical solution of $\mathcal{H}^k (t_{k+1})$, given $u(t) = t^{\sigma_r}$. Fortunately, we know from \cite{Deng2007} that \eqref{2.9} is also equivalent to
\begin{align}\label{2.22}
u(t_{k+1}) =& u(t_k) + \frac{\lambda}{\Gamma(\alpha)} \int_{t_k}^{t_{k+1}} (t_{k+1} - v)^{\alpha - 1} u(v) dv \nonumber\\
& + \frac{\lambda}{\Gamma(\alpha)} \int_{0}^{t_{k}} \left[(t_{k+1} - v)^{\alpha - 1} - (t_{k} - v)^{\alpha - 1}\right] u(v) dv.
\end{align}
Comparing \eqref{2.9} with \eqref{2.22} and using \eqref{2.5}, we can obtain
\begin{align}\label{2.23}
\mathcal{H}^k (t_{k+1}) =& - \frac{\lambda}{\Gamma(\alpha)} \int_{0}^{t_{k}} \left[(t_{k+1} - v)^{\alpha - 1} - (t_{k} - v)^{\alpha - 1}\right] u(v) dv \nonumber\\
=& - \frac{1}{\Gamma(\alpha)} \int_{0}^{t_{k}} \left[(t_{k+1} - v)^{\alpha - 1} - (t_{k} - v)^{\alpha - 1}\right] {}_0^C D_{t}^\alpha u(v) dv.
\end{align}
Therefore, we have the following linear system of size $\tilde{m}_u \times \tilde{m}_u$ to be solved for $k=1,2,\ldots,N-1$ time steps:
\begin{align}\label{2.24}
\sum_{j=1}^{\tilde{m}_u} \tilde{W}_{k,j}^{(\alpha,\sigma,p)} j^{\sigma_r} = & k^{\sigma_r} - \frac{\Gamma(\sigma_r+1)}{\Gamma(\alpha) \Gamma(\sigma_r-\alpha+1)} (k+1)^{\sigma_r} B\left(\frac{k}{k+1};\sigma_r-\alpha+1,\alpha\right) \nonumber\\
& - \frac{1}{\Gamma(\alpha) \Gamma(2-\alpha)} \sum_{j=0}^k \gamma_{k,j}^{(p)} j^{\sigma_r},~~~1 \le k \le N-1,~~~1 \le r \le \tilde{m}_u,
\end{align}
where $B(z;a,b)$ denotes the incomplete beta function, which is defined by
\begin{align*}
B(z;a,b) = \int_0^z v^{a-1} (1-v)^{b-1} dv.
\end{align*}

Substituting \eqref{2.18} and \eqref{2.21} into \eqref{2.9} yields
\begin{align}\label{2.25}
u(t_{k+1}) = u(t_k) \!+\! \lambda h^\alpha I_{h,m_u}^{\alpha,\sigma,p} u(t_{k+1}) \!-\! \mathcal{H}_{\tilde{m}_u}^{\alpha,\sigma,p} u(t_{k+1}) \!+\! R_{k+1}^u \!+\! \tilde{R}_{k+1}^u,~~~p=0, 1.
\end{align}
Dropping the truncation errors $R_{k+1}^u$ and $\tilde{R}_{k+1}^u$ in \eqref{2.25} and replacing $u(t_k)$ with approximate solution $u_k$, we obtain the following FAMMs with correction terms for solving \eqref{2.5}:
\begin{equation}\label{2.26}
\dfrac{u_{k+1} - u_k}{h^\alpha} = \lambda I_{h,m_u}^{\alpha,\sigma,p} u_{k+1} - \dfrac{1}{h^\alpha} \mathcal{H}_{\tilde{m}_u}^{\alpha,\sigma,p} u_{k+1},~~~p=0, 1,
\end{equation}
with $I_{h,m_u}^{\alpha,\sigma,p} u_{k+1}$ and $\mathcal{H}_{\tilde{m}_u}^{\alpha,\sigma,p} u_{k+1}$ given, respectively, by:
\begin{align}\label{2.27}
I_{h,m_u}^{\alpha,\sigma,p} u_{k+1} = \sum_{j=0}^p \beta_j^{(p)} u_{k+1-j} + \sum_{j=1}^{m_u} W_{k,j}^{(\alpha,\sigma,p)} \left(u_{j} - u_0\right),
\end{align}
and
\begin{align}\label{2.28}
\mathcal{H}_{\tilde{m}_u}^{\alpha,\sigma,p} u_{k+1} = \frac{1}{\Gamma(\alpha) \Gamma(2-\alpha)} \sum_{j=0}^{k} \gamma_{k,j}^{(p)} u_j + \sum_{j=1}^{\tilde{m}_u} \tilde{W}_{k,j}^{(\alpha,\sigma,p)} \left(u_{j} - u_0\right).
\end{align}

\subsection{Nonlinear FDEs}
\label{sec:nonlinear}

Having defined the discretization and corrections for the linear case, we now consider the numerical solutions of the following nonlinear FDE:
\begin{align}\label{3.1}
{}^C_{0} D_t^\alpha u(t) = \lambda u(t) + f(t,u),~~\alpha \in (0,1],~t \in (0,T];~~~u(0)=u_0,
\end{align}
where the nonlinear function $f: (0,T] \times \mathbb{R}^d \to \mathbb{R}^d$ satisfies the Lipschitz condition with constant $L > 0$:
\begin{align}\label{3.2}
\left\| f(t,u) - f(t,\hat{u}) \right\|_\infty \le L \left\| u - \hat{u} \right\|_\infty,~~~t \in (0,T],~u,\hat{u} \in \mathbb{R}^d,
\end{align}
where $\|\cdot\|_\infty$ denotes the usual maximum norm on $\mathbb{R}^d$. Under these assumptions, it has been proved by Diethelm and Ford \cite[Theorem 2.1 and 2.2]{Diethelm2002} that problem \eqref{3.1} has a unique solution on the interval $(0,T]$. 

By the same token, we adopt the FAMMs developed in \cite{Zayernouri2016MS} to solve \eqref{3.1}, and in a similar fashion as Section \ref{sec:linear}, but with the addition of the nonlinear term $f(t, u)$, we have:
\begin{align}\label{3.3}
u(t_{k+1})-u(t_k) = \left[ \lambda\, {}_{t_k}I_t^\alpha u(t) + {}_{t_k}I_t^\alpha f(t,u) -  \mathcal{H}^k (t)\right]\Big|_{t=t_{k+1}}.
\end{align}
Therefore, the FAMMs for \eqref{3.3} is given by:
\begin{align}\label{3.4}
\frac{u_{k+1} - u_k}{h^\alpha} = \sum_{j=0}^p \beta_j^{(p)} \left(\lambda u_{k+1-j} + f_{k+1-j} \right) - \frac{1}{h^\alpha}\mathcal{H}_{p}^k,~~~p=0, 1,
\end{align}
where $f_{k+1-j} = f(t_{k+1-j},u_{k+1-j})$. 

\subsubsection{Corrections Terms for $f(t,u)$}

The regularity of $f(t,u)$ is related to the regularity of $u(t)$. If $u(t)$ satisfies \eqref{2.17}, we know from \eqref{3.1} that
\begin{align}\label{3.5}
f(t,u) - f(0,u(0)) = & -\lambda \sum_{r=1}^{m+1} d_r t^{\sigma_r} + \sum_{r=1}^{m+1} d_r \frac{\Gamma(\sigma_r+1)}{\Gamma(\sigma_r-\alpha+1)} t^{\sigma_r - \alpha} + \ldots \nonumber\\
:=& \sum_{r=1}^{l+1} h_r t^{\delta_r} + \hat{\zeta}(t) t^{\delta_{l+2}},~~~\delta_r < \delta_{r+1},
\end{align}
where $\hat{\zeta}(t)$ is a uniformly continuous function for $t \in [0,T]$ and $\delta_r \in \left\{ \sigma_l\right\} \cup \left\{ \sigma_l - \alpha\right\}$. Similar to \eqref{2.18} and by using Lemma \ref{lemma2.1}, we have
\begin{align}\label{3.6}
{}_{t_k}I_t^\alpha f(t,u) \Big|_{t=t_{k+1}} 
=& h^\alpha \sum_{j=0}^p \beta_j^{(p)} f(t_{k+1-j},u(t_{k+1-j})) \nonumber\\
&+ h^\alpha \sum_{j=1}^{m_f} W_{k,j}^{(\alpha,\delta,p)} \left(f(t_j,u(t_{j})) - f(0,u_0)\right) + R_{k+1}^f \nonumber\\
:= & h^\alpha I_{h,m_f}^{\alpha,\delta,p} f(t_{k+1},u(t_{k+1})) + R_{k+1}^f,
\end{align}
where $R_{k+1}^f = \mathcal{O}(h^{\alpha+p+1} t_{k+1}^{\delta_{m_f+1}-p-1})$, and $\left\{W_{k,j}^{(\alpha,\delta,p)}\right\}$ with $k = 0,1,\ldots,N-1$ and $j=1,2,\ldots,m_f$ are given by:
\begin{align}\label{eq:corr_force1}
\sum_{j=1}^{m_f} W_{0,j}^{(\alpha,\delta,p)} j^{\delta_r} = \frac{\Gamma(\delta_r+1)}{\Gamma(\delta_r+\alpha+1)} - \sum_{j=0}^p \beta_j^{(p)} (1-j)^{\delta_r},~~~1 \le r \le m_f,
\end{align}
\begin{align}\label{eq:corr_force2}
\sum_{j=1}^{m_f} W_{k,j}^{(\alpha,\delta,p)} j^{\delta_r} =& \frac{k^{\delta_r}}{\Gamma(\alpha+1)} {}_2F_1\left(-\delta_r,1;\alpha+1;-\frac{1}{k}\right) - \sum_{j=0}^p \beta_j^{(p)} (k+1-j)^{\delta_r}, \nonumber\\
&1 \le k \le N-1,~~~1 \le r \le m_f.
\end{align}

Inserting \eqref{2.18}, \eqref{2.21} and \eqref{3.6} into \eqref{3.3} yields
\begin{align}\label{3.7}
u(t_{k+1}) = & u(t_k) + \lambda h^\alpha I_{h,m_u}^{\alpha,\sigma,p} u(t_{k+1}) + h^\alpha I_{h,m_f}^{\alpha,\delta,p} f(t_{k+1},u(t_{k+1})) \nonumber\\
& - \mathcal{H}_{\tilde{m}_u}^{\alpha,\sigma,p} u(t_{k+1}) + R_{k+1}^u + R_{k+1}^f + \tilde{R}_{k+1}^u,~~~p=0, 1.
\end{align}
Therefore, we obtain the following FAMMs with correction terms for solving \eqref{3.1}:
\begin{align}\label{3.8}
\dfrac{u_{k+1} - u_k}{h^\alpha} = \lambda I_{h,m_u}^{\alpha,\sigma,p} u_{k+1} + 
I_{h,m_f}^{\alpha,\delta,p} f_{k+1} - 
\dfrac{1}{h^\alpha}\mathcal{H}_{\tilde{m}_u}^{\alpha,\sigma,p} u_{k+1},~~~p=0, 
1,
\end{align}
with $I_{h,m_u}^{\alpha,\sigma,p} u_{k+1}$, $\mathcal{H}_{\tilde{m}_u}^{\alpha,\sigma,p} u_{k+1}$ and $I_{h,m_f}^{\alpha,\delta,p} f_{k+1}$ given, respectively, by \eqref{2.27}, \eqref{2.28} and
\begin{align}\label{3.8*}
I_{h,m_f}^{\alpha,\delta,p} f_{k+1} = \sum_{j=0}^p \beta_j^{(p)} f_{k+1-j} + \sum_{j=1}^{m_f} W_{k,j}^{(\alpha,\delta,p)} \left(f_j - f_0\right).
\end{align}

In order to obtain the IMEX methods, we follow the idea from Cao et al. \cite{Cao2016}, and employ an extrapolation to linearize the nonlinear force term $f(t_{k+1},u(t_{k+1}))$ in \eqref{3.7}, which is given by:
\begin{align}\label{3.9}
f(t_{k+1},u(t_{k+1})) = E_k^{p} f(t_k,u(t_k)) \!+\! \sum_{j=1}^{\tilde{m}_f} W_{k,j}^{(\delta,p)} (f(t_j,u(t_j)) - f(0,u_0)) + \tilde{R}_{k+1}^f,
\end{align}
where 
\begin{align}\label{3.10}
E_k^p f(t_k,u(t_k))= \left\{ {\begin{array}{*{20}{l}}
f(t_{k},u(t_k)),&p=0,\\[3\jot]
2f(t_{k},u(t_k)) - f(t_{k-1},u(t_{k-1})),~~~&p=1,
\end{array}} \right.
\end{align}
and $\tilde{R}_k^f = \mathcal{O}(h^{p+1} t_{k+1}^{\delta_{\tilde{m}_f + 1}-p-1})$. In addition, the $\tilde{m}_f \times \tilde{m}_f$ linear system for correction weights is given by:
\begin{align}\label{3.11}
\sum_{j=1}^{\tilde{m}_f} W_{k,j}^{(\delta,0)} j^{\delta_r} = (k+1)^{\delta_r} - k^{\delta_r},~~~1 \le r \le \tilde{m}_f,
\end{align}
\begin{align}\label{3.12}
\sum_{j=1}^{\tilde{m}_f} W_{k,j}^{(\delta,1)} j^{\delta_r} = (k+1)^{\delta_r} - 2k^{\delta_r} + (k-1)^{\delta_r},~~~1 \le r \le \tilde{m}_f.
\end{align}

Inserting \eqref{3.9} into \eqref{3.7} yields
\begin{align}\label{3.13}
u(t_{k+1}) = & u(t_k) + \lambda h^\alpha I_{h,m_u}^{\alpha,\sigma,p} u(t_{k+1}) + h^\alpha I_{h,m_f}^{\alpha,\delta,p} f(t_{k+1},u(t_{k+1})) \nonumber\\
& + h^\alpha \beta_0^{(p)} \bigg[-f(t_{k+1},u(t_{k+1})) + E_k^p f(t_{k},u(t_k)) \nonumber\\
&+ \sum_{j=1}^{\tilde{m}_f} W_{k,j}^{(\delta,p)} (f(t_j,u(t_j)) - f(0,u_0)) \bigg] - \mathcal{H}_{\tilde{m}_u}^{\alpha,\sigma,p} u(t_{k+1}) \nonumber\\
& + R_{k+1}^u + R_{k+1}^f + \tilde{R}_{k+1}^u + h^\alpha \beta_0^{(p)} \tilde{R}_{k+1}^f,~~~p=0, 1.
\end{align}
Finally, we obtain the IMEX($p$) methods in the following form:
\begin{align}\label{3.14}
\dfrac{u_{k+1} - u_k}{h^\alpha} =& \lambda I_{h,m_u}^{\alpha,\sigma,p} u_{k+1} + I_{h,m_f}^{\alpha,\delta,p} f_{k+1} - \dfrac{1}{h^\alpha}\mathcal{H}_{\tilde{m}_u}^{\alpha,\sigma,p} u_{k+1} \nonumber\\
& + \beta_0^{(p)} \left[-f_{k+1} + E_k^p f_{k} + \sum_{j=1}^{\tilde{m}_f} W_{k,j}^{(\delta,p)} (f_j - f_0) \right],~~~p=0, 1,
\end{align}
with $I_{h,m_u}^{\alpha,\sigma,p} u_{k+1}$, $\mathcal{H}_{\tilde{m}_u}^{\alpha,\sigma,p} u_{k+1}$, $I_{h,m_f}^{\alpha,\delta,p} f_{k+1}$ and $E_k^p f_{k}$ given, respectively, by \eqref{2.27}, \eqref{2.28}, \eqref{3.8*} and
\begin{align}\label{3.10*}
E_k^p f_k= \left\{ {\begin{array}{*{20}{l}}
f_{k},&p=0,\\[3\jot]
2f_{k} - f_{k-1},~~~&p=1.
\end{array}} \right.
\end{align}

\begin{remark}
If $\alpha = 1$, the history load term for \eqref{3.1} is zero and we don't need to add the correction terms to solve \eqref{3.1}. Moreover, \eqref{3.14} recovers the standard IMEX methods.
\end{remark}

In the following, we will present the convergence results for the IMEX($p$) methods \eqref{3.14}. For this purpose, we first introduce some preparatory results. Both of the proofs will be given in Appendix \ref{appendix}.

\begin{lemma}\label{lemma2.4}
The correction weights $W_{k,j}^{(\alpha,\sigma,p)}$, $\tilde{W}_{k,j}^{(\alpha,\sigma,p)}$, $W_{k,j}^{(\alpha,\delta,p)}$ and $W_{k,j}^{(\delta,p)}$ in \eqref{2.18}, \eqref{2.21}, \eqref{3.6} and \eqref{3.9}, respectively, satisfy
\begin{align*}
\left|W_{k,j}^{(\alpha,\sigma,p)} \right| = \mathcal{O}((k+1)^{\sigma_{m_u}-p-1}),~~~\left|W_{k,j}^{(\alpha,\delta,p)}\right| = \mathcal{O}((k+1)^{\delta_{m_f}-p-1}),
\end{align*}
\[\left|\tilde{W}_{k,j}^{(\alpha,\sigma,p)}\right| = \mathcal{O} ((k+1)^{-\alpha-1}) + \mathcal{O}((k+1)^{\sigma_{\tilde{m}_u}-p-1}),~~~\left| W_{k,j}^{(\delta,p)} \right| = \mathcal{O}((k+1)^{\delta_{\tilde{m}_f} - p - 1}).\]
\end{lemma}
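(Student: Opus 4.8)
The plan is to treat all four weight families uniformly by exploiting that each of them is the solution of a linear system whose coefficient matrix is a \emph{fixed} generalized Vandermonde matrix, independent of both $k$ and $h$. Indeed, collecting the systems \eqref{2.20}, \eqref{eq:corr_force2}, \eqref{2.24} and \eqref{3.11}--\eqref{3.12}, the unknown vector $\mathbf{w}_k = (W_{k,1},\dots,W_{k,m})^{\top}$ satisfies $M\,\mathbf{w}_k = \mathbf{b}(k)$, where $M_{rj}=j^{\sigma_r}$ (respectively $M_{rj}=j^{\delta_r}$) for $1\le r,j\le m$, with $m\in\{m_u,\tilde m_u,m_f,\tilde m_f\}$. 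Since the exponents are strictly ordered and the nodes $1,\dots,m$ are distinct and positive, $M$ is nonsingular and $\|M^{-1}\|_\infty$ is a finite constant depending only on the exponent set, $\alpha$ and $p$, but not on $k$ or $h$. Consequently
\begin{align*}
\max_{1\le j\le m}\bigl|W_{k,j}\bigr| \;\le\; \|M^{-1}\|_\infty\,\max_{1\le r\le m}\bigl|b_r(k)\bigr|,
\end{align*}
and the whole lemma reduces to estimating the right-hand-side entries $b_r(k)$ uniformly in $k$ and $h$.

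Next I would identify each $b_r(k)$ as a suitably rescaled local truncation error and invoke the preparatory lemmas. For $W_{k,j}^{(\alpha,\sigma,p)}$ and $W_{k,j}^{(\alpha,\delta,p)}$, comparing \eqref{2.18} (which is exact at the monomials by the defining conditions) with \eqref{2.20} and \eqref{eq:corr_force2} shows that $b_r(k)=h^{-\alpha-\sigma_r}\bigl[{}_{t_k}I_t^\alpha t^{\sigma_r}\big|_{t_{k+1}}-h^\alpha\sum_{j=0}^p\beta_j^{(p)}t_{k+1-j}^{\sigma_r}\bigr]$, that is, $h^{-\alpha-\sigma_r}$ times exactly the quantity bounded in Lemma~\ref{lemma2.1} with $\sigma=\sigma_r$. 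Since $t_{k+1}^{\sigma_r-p-1}=h^{\sigma_r-p-1}(k+1)^{\sigma_r-p-1}$, the factors of $h$ cancel identically and Lemma~\ref{lemma2.1} yields $|b_r(k)|\le C_2(k+1)^{\sigma_r-p-1}$ (analogously with $\delta_r$). Taking the maximum over $r$ and using $\sigma_r\le\sigma_{m_u}$ together with $k+1\ge 2$ gives $|W_{k,j}^{(\alpha,\sigma,p)}|=\mathcal{O}((k+1)^{\sigma_{m_u}-p-1})$ and likewise $|W_{k,j}^{(\alpha,\delta,p)}|=\mathcal{O}((k+1)^{\delta_{m_f}-p-1})$; the special initial system \eqref{2.19} is a single $k=0$ instance that only contributes a constant, consistent with these bounds.

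For $\tilde W_{k,j}^{(\alpha,\sigma,p)}$ the same reduction applies, with $\mathbf{b}(k)$ now read off from \eqref{2.24}. The crucial observation is that the right-hand side of \eqref{2.24} is expressed \emph{purely} in terms of $k$ (through $k^{\sigma_r}$, the incomplete-beta term, and $\sum_j\gamma_{k,j}^{(p)}j^{\sigma_r}$) and is $h$-independent; by \eqref{2.21}--\eqref{2.23} it equals $h^{-\sigma_r}$ times the exact history-load error $\mathcal{H}^k(t_{k+1})-\mathcal{H}_p^k(t_{k+1})$ for $u(t)=t^{\sigma_r}$. Rescaling Lemma~\ref{lemma2.2} with $\sigma=\sigma_r$ by $h^{-\sigma_r}$ turns its first two terms into $(k+1)^{-\alpha-1}$ and $(k+1)^{\sigma_r-\alpha-p-1}$, while the catch-all $\mathcal{O}(h^{p+1})$ term cannot survive precisely because \eqref{2.24} carries no $h$. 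Hence $|\tilde b_r(k)|=\mathcal{O}((k+1)^{-\alpha-1})+\mathcal{O}((k+1)^{\sigma_r-p-1})$, and maximizing over $r$ (using $-\alpha\le 0$) gives $|\tilde W_{k,j}^{(\alpha,\sigma,p)}|=\mathcal{O}((k+1)^{-\alpha-1})+\mathcal{O}((k+1)^{\sigma_{\tilde m_u}-p-1})$. Finally, for $W_{k,j}^{(\delta,p)}$ the entries of $\mathbf{b}(k)$ in \eqref{3.11}--\eqref{3.12} are just the first forward difference $(k+1)^{\delta_r}-k^{\delta_r}$ ($p=0$) and the central second difference $(k+1)^{\delta_r}-2k^{\delta_r}+(k-1)^{\delta_r}$ ($p=1$) of the monomial $s\mapsto s^{\delta_r}$; a Taylor/mean-value estimate gives $\mathcal{O}((k+1)^{\delta_r-1})$ and $\mathcal{O}((k+1)^{\delta_r-2})$, i.e.\ $\mathcal{O}((k+1)^{\delta_r-p-1})$, so the maximum over $r$ produces $\mathcal{O}((k+1)^{\delta_{\tilde m_f}-p-1})$.

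The main obstacle is twofold. First, the reduction hinges on a \emph{uniform} (i.e.\ $k$- and $h$-independent) bound on $\|M^{-1}\|_\infty$; establishing nonsingularity of the generalized Vandermonde matrix with real non-integer exponents — and, if explicit constants are desired, controlling its conditioning — requires the theory of Chebyshev/Descartes systems rather than the classical polynomial Vandermonde determinant. Second, the $\tilde W$ estimate is the delicate one: one must verify that rescaling the general-purpose bound of Lemma~\ref{lemma2.2} by $h^{-\sigma_r}$ indeed produces \emph{clean} powers of $(k+1)$ — namely the history-decay rate $(k+1)^{-\alpha-1}$ and the quadrature rate $(k+1)^{\sigma_r-p-1}$, with no residual $h$-dependence — which is cleanest to confirm by working directly with the $h$-free expression \eqref{2.24} and the large-$k$ asymptotics of the incomplete beta function and of $\sum_j\gamma_{k,j}^{(p)}j^{\sigma_r}$ (the latter controlled via Lemma~\ref{lemma2.3}).
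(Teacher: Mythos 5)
Your proposal is correct and follows essentially the same route as the paper's proof: identify the right-hand sides of the defining systems \eqref{2.20}, \eqref{eq:corr_force2}, \eqref{2.24} and \eqref{3.11}--\eqref{3.12} as $h$-rescaled truncation errors, bound them via Lemma~\ref{lemma2.1}, Lemma~\ref{lemma2.2} and direct difference estimates for $W_{k,j}^{(\delta,p)}$, and observe that the powers of $h$ cancel to leave clean powers of $(k+1)$. In fact you are more explicit than the paper on the two points it glosses over, namely passing from bounds on the sums $\sum_{j} W_{k,j}\, j^{\sigma_r}$ to bounds on the individual weights via the fixed nonsingular generalized Vandermonde matrix, and eliminating the residual $\mathcal{O}(h^{p+1})$ term of Lemma~\ref{lemma2.2} by exploiting the $h$-independence of \eqref{2.24}.
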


\begin{theorem}\label{theorem:convergence}
Suppose that the Lipschitz condition \eqref{3.2} holds. If $\sigma_{\tilde{m}_u} \le p+1,~\sigma_{m_u},\delta_{m_f},\delta_{\tilde{m}_f} \le p+\alpha+1$, then, for the IMEX($p$) methods \eqref{3.14}, there exists a constant $C_4 > 0$ independent of $h$ such that
\begin{align}\label{3.15}
\max_{0 \le k \le N} \left\| u(t_k) - u_k\right\|_{\infty} \le C_4 \left(\sum_{j=1}^M \left\| u(t_j) - u_j\right\|_{\infty} + h^q \right),
\end{align}
where $M = \max\left\{m_u, m_f, \tilde{m}_u, \tilde{m}_f \right\}$ and 
\[q = \min\left\{p+1, \sigma_{\tilde{m}_u+1} + p, \sigma_{m_u+1}+\alpha+p, \delta_{m_f+1}+\alpha+p, \delta_{\tilde{m}_f+1}+\alpha+p \right\}.\]
\end{theorem}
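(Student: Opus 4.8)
The plan is to control the global error $e_k:=u(t_k)-u_k$ by deriving a closed recursion for it and terminating the argument with a discrete fractional Gronwall inequality built on the preparatory lemmas. The first step is to subtract the scheme \eqref{3.14} from the exact relation \eqref{3.13}. Exploiting the linearity of the operators $I_{h,m_u}^{\alpha,\sigma,p}$, $I_{h,m_f}^{\alpha,\delta,p}$ and $\mathcal{H}_{\tilde m_u}^{\alpha,\sigma,p}$, together with the crucial structural fact that the implicit contribution $\beta_0^{(p)}f_{k+1}$ hidden inside $I_{h,m_f}^{\alpha,\delta,p}f_{k+1}$ is exactly cancelled by the $-\beta_0^{(p)}f_{k+1}$ introduced by the extrapolation step \eqref{3.9} (so that $f$ is treated fully explicitly), I would arrive at an error identity in which the only term carrying $e_{k+1}$ is $\lambda h^\alpha\beta_0^{(p)}e_{k+1}$ from the linear part. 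Writing $G_k$ for the collection of all $f$-differences — which, thanks to this cancellation, involves only $e_0,\dots,e_k$ — and $T_{k+1}:=R_{k+1}^u+R_{k+1}^f+\tilde R_{k+1}^u+h^\alpha\beta_0^{(p)}\tilde R_{k+1}^f$ for the total local truncation error, the identity reads $(1-\lambda h^\alpha\beta_0^{(p)})e_{k+1}=e_k+\lambda h^\alpha\big[I_{h,m_u}^{\alpha,\sigma,p}e_{k+1}-\beta_0^{(p)}e_{k+1}\big]-\mathcal{H}_{\tilde m_u}^{\alpha,\sigma,p}e_{k+1}+h^\alpha G_k+T_{k+1}$, where $\mathcal{H}_{\tilde m_u}^{\alpha,\sigma,p}e_{k+1}$ uses only $e_0,\dots,e_k$ as in \eqref{2.28}.

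Next I would estimate each non-implicit piece. The Lipschitz condition \eqref{3.2} converts every entry of $G_k$ into a multiple of some $\|e_j\|_\infty$ with $j\le k$, the explicit-correction weights $W_{k,j}^{(\alpha,\delta,p)}$ and $W_{k,j}^{(\delta,p)}$ being controlled by Lemma \ref{lemma2.4}. The hypotheses of Theorem \ref{theorem:convergence} enter precisely here: since the $W^{(\alpha,\sigma,p)}$- and $W^{(\alpha,\delta,p)}$-corrections appear premultiplied by $h^\alpha$, Lemma \ref{lemma2.4} gives $h^\alpha|W_{k,j}^{(\alpha,\sigma,p)}|=\mathcal{O}\big(h^{\alpha+p+1-\sigma_{m_u}}t_{k+1}^{\sigma_{m_u}-p-1}\big)$, which is uniformly bounded exactly when $\sigma_{m_u}\le p+\alpha+1$ (and analogously for $\delta_{m_f},\delta_{\tilde m_f}$), whereas the history corrections $\tilde W_{k,j}^{(\alpha,\sigma,p)}$ carry no $h^\alpha$ factor and stay bounded only under $\sigma_{\tilde m_u}\le p+1$. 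Because all of these corrections act on the starting errors $e_1,\dots,e_{M}$ with $M=\max\{m_u,m_f,\tilde m_u,\tilde m_f\}$, their total contribution is absorbed into the term $\sum_{j=1}^{M}\|e_j\|_\infty$ on the right-hand side of \eqref{3.15}. Finally, Lemma \ref{lemma2.3} supplies the sign pattern $\gamma_{k,j}^{(p)}<0$ for $j<k$, the bounded positive diagonal $\gamma_{k,k}^{(p)}$, and the identity $\sum_j\gamma_{k,j}^{(p)}=0$; since these force $\sum_{j<k}(-\gamma_{k,j}^{(p)})=\gamma_{k,k}^{(p)}\le C_1$, the total memory weight in $-\mathcal{H}_{\tilde m_u}^{\alpha,\sigma,p}e_{k+1}$ is bounded uniformly in $k$, which is the structural input the Gronwall step requires.

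After dividing by $1-\lambda h^\alpha\beta_0^{(p)}$ — invertible and bounded below as soon as $h$ is small enough that $|\lambda|h^\alpha\beta_0^{(p)}\le\tfrac12$ — these estimates reduce the identity to a discrete inequality bounding $\|e_{k+1}\|_\infty$ by $(1+Ch^\alpha)\|e_k\|_\infty$, a memory sum $C\sum_{j<k}|\gamma_{k,j}^{(p)}|\,\|e_j\|_\infty$, an averaged term $Ch^\alpha\sum_{j\le k}\|e_j\|_\infty$, the starting-error term $C\sum_{j=1}^{M}\|e_j\|_\infty$, and $\|T_{k+1}\|_\infty$. I expect this Gronwall step to be the main obstacle: the memory weights $\gamma_{k,j}^{(p)}$ are genuinely two-index (hypergeometric) rather than of pure convolution type, so I cannot directly quote a generating-function estimate; instead I would verify the positivity and monotonicity hypotheses of a Dixon--McKee-type discrete fractional Gronwall inequality using the boundedness and sum-to-zero properties from Lemma \ref{lemma2.3}, thereby producing a constant $C_4$ independent of $h$.

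The last step is to accumulate the truncation error. Each of $R_{k+1}^u$, $R_{k+1}^f$, $\tilde R_{k+1}^u$ and $h^\alpha\beta_0^{(p)}\tilde R_{k+1}^f$ has the form $h^{a}t_{k+1}^{b}$ with a possibly negative time-exponent $b$, so $\|T_{k+1}\|_\infty$ is singular near $t=0$; summing these against the Gronwall kernel and using $\sum_j h\,t_j^{b}\sim\int_0^{T}t^{b}\,dt$ for $b>-1$ converts each local exponent into a global rate — namely $h^{p+1}$ from the smooth remainder in $\tilde R_{k+1}^u$, together with $h^{\sigma_{\tilde m_u+1}+p}$, $h^{\sigma_{m_u+1}+\alpha+p}$, $h^{\delta_{m_f+1}+\alpha+p}$ and $h^{\delta_{\tilde m_f+1}+\alpha+p}$ from the remaining singular pieces recorded in Lemmas \ref{lemma2.1}--\ref{lemma2.2} and in \eqref{3.9}. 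Taking the minimum of these exponents yields the stated $h^q$, and combining it with the absorbed starting-error contribution $\sum_{j=1}^{M}\|u(t_j)-u_j\|_\infty$ delivers the bound \eqref{3.15}, completing the argument.
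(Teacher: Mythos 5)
Your outline reproduces the paper's own proof essentially step for step: subtract \eqref{3.14} from \eqref{3.13} (the cancellation of the implicit $\beta_0^{(p)}f_{k+1}$ that you highlight is exactly why the paper's error inequality \eqref{3.16} carries the unknown $e_{k+1}$ only through the $\lambda$-term), convert all $f$-differences into errors via the Lipschitz condition \eqref{3.2}, bound the four families of correction weights by Lemma \ref{lemma2.4} under precisely the stated restrictions (the $h^\alpha$-premultiplied families needing $\sigma_{m_u},\delta_{m_f},\delta_{\tilde m_f}\le p+\alpha+1$ and the history family needing $\sigma_{\tilde m_u}\le p+1$, exactly as in the paper), absorb those contributions into $\sum_{j=1}^{M}\|e_j\|_\infty$, control the memory weights through Lemma \ref{lemma2.3} (the paper uses $\sum_{j=0}^{k}|\gamma_{k,j}^{(0)}|=2\gamma_{k,k}^{(0)}\le 2C_1$, which is your sum-to-zero/sign argument), divide by $1-|\lambda|\beta_0^{(p)}h^\alpha$, and finish with a discrete Gronwall inequality. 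Two remarks on the Gronwall step: your worry that the weights are ``genuinely two-index'' is unfounded, since $\mathcal{A}_{k+1,j+1}=\mathcal{A}_{k,j}$ makes $\gamma_{k,j}^{(p)}$ depend only on $k-j$ away from the boundary indices; and the paper does not invoke a Dixon--McKee fractional inequality but the standard discrete Gronwall inequality of Heywood--Rannacher, feeding it only the uniform bound on the total memory weight --- the same structural input you identified (note also that in \eqref{3.16} the paper keeps the memory sum with a negative sign coming from the sign pattern of $\gamma_{k,j}^{(p)}$, whereas you bound it additively in norm, which is the more conservative reading).

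The one place where your proposal genuinely deviates and would fail as written is the accumulation of the truncation error. You propose to sum the local errors against the step size, $\sum_j h\,t_j^{b}\sim\int_0^T t^b\,dt$, ``for $b>-1$''; but the dominant singular piece of the history-load error, $\tilde R^u_{k+1}=\mathcal{O}\bigl(h^{\sigma_{\tilde m_u+1}+\alpha+1}\,t_{k+1}^{-\alpha-1}\bigr)+\cdots$, has time-exponent $b=-\alpha-1<-1$, so the integral-comparison step you rely on simply does not apply to that term. The paper's proof avoids summation altogether: it bounds each of $R_{k+1}^u$, $R_{k+1}^f$, $\tilde R_{k+1}^u$ and $h^\alpha\beta_0^{(p)}\tilde R_{k+1}^f$ \emph{pointwise}, using $t_{k+1}\ge h$ to trade negative time-exponents for powers of $h$, which yields the uniform per-step estimate $R_{k+1}\le v_9 h^{q_0}$ with $q_0=\min\{1,\sigma_{\tilde m_u+1},\sigma_{m_u+1}+\alpha,\delta_{m_f+1}+\alpha,\delta_{\tilde m_f+1}+\alpha\}$ (and the analogous $q_1$ for $p=1$); the Gronwall form then requires only this per-step bound, not the sum over steps. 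If you replace your last paragraph by this worst-case pointwise estimate, your argument coincides with the paper's proof.
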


\section{Linear Stability of IMEX($p$) Methods}
\label{section3}

In this section, we investigate the linear stability of the proposed IMEX($p$) methods \eqref{3.14} by considering the following usual scalar test equation
\begin{align}\label{4.1}
{}^C_{0} D_t^\alpha u(t) = \lambda u(t) + \rho u(t),~~\alpha \in (0,1],~t \ge 0,~\lambda, \rho \in \mathbb{C};~~~u(0)=u_0.
\end{align}
For this, the following result from \cite{Lubich19861} is useful to determine the stability regions of the obtained numerical schemes.

\begin{lemma}(cf. \cite{Lubich19861})\label{lemma4.1}
Assume that the sequence $\left\{g_k \right\}$ is convergent and that the quadrature weights $w_k (k \ge 1)$ satisfy
\begin{align*}
w_k = \frac{k^{\alpha - 1}}{\Gamma(\alpha+1)} + v_k,~\mbox{where}~\sum_{k=1}^\infty |v_k| < \infty,
\end{align*}
then the stability region of the convolution quadrature $u_k = g_k + \hat{h}\sum\limits_{j=0}^k w_{k-j} u_j$ is
\[S = \left\{ \hat{h} \in \mathbb{C}: 1 - \hat{h}w^\alpha (\xi) \ne 0,~|\xi| \le 1\right\},~\mbox{where}~w^\alpha(\xi) = \sum_{j=0}^\infty w_j \xi^j,\]
where $\hat{h} = h^\alpha (\lambda+\rho)$ or $\hat{h}$ is some function of $h^\alpha (\lambda+\rho)$.
\end{lemma}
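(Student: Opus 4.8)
The plan is to pass to generating functions and reduce the stability question to the location of the zeros of the symbol $1-\hat{h}\,w^\alpha(\xi)$. Introducing the formal power series $U(\xi)=\sum_{k\ge 0}u_k\xi^k$ and $G(\xi)=\sum_{k\ge 0}g_k\xi^k$, the Cauchy product shows that the discrete convolution $\sum_{j=0}^k w_{k-j}u_j$ has generating function $w^\alpha(\xi)U(\xi)$, so the recursion $u_k=g_k+\hat{h}\sum_{j=0}^k w_{k-j}u_j$ collapses to the single algebraic identity
\[
\bigl(1-\hat{h}\,w^\alpha(\xi)\bigr)U(\xi)=G(\xi),\qquad U(\xi)=\frac{G(\xi)}{1-\hat{h}\,w^\alpha(\xi)}.
\]
Stability --- boundedness of $\{u_k\}$ for every convergent input $\{g_k\}$ --- is then equivalent to the resolvent $R(\xi):=\bigl(1-\hat{h}\,w^\alpha(\xi)\bigr)^{-1}$ having no poles in the closed unit disk, which is precisely the defining condition $1-\hat{h}\,w^\alpha(\xi)\ne 0$ for $|\xi|\le 1$ of the set $S$. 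The remaining work is to justify that this zero-freeness really is equivalent to a well-behaved inverse transform.

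Next I would record the analytic structure of the symbol forced by the hypothesis $w_k=k^{\alpha-1}/\Gamma(\alpha+1)+v_k$ with $\sum_k|v_k|<\infty$. Splitting accordingly,
\[
w^\alpha(\xi)=\frac{1}{\Gamma(\alpha+1)}\sum_{k\ge 1}k^{\alpha-1}\xi^k+V(\xi),\qquad V(\xi)=\sum_{k\ge 1}v_k\xi^k,
\]
the perturbation $V$ lies in the Wiener algebra and is therefore continuous on $|\xi|\le 1$, while the leading sum is the polylogarithm $\mathrm{Li}_{1-\alpha}(\xi)$, analytic on $|\xi|<1$ with boundary asymptotics $\mathrm{Li}_{1-\alpha}(\xi)\sim\Gamma(\alpha)(1-\xi)^{-\alpha}$ as $\xi\to 1$. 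Hence $w^\alpha(\xi)\sim\alpha^{-1}(1-\xi)^{-\alpha}\to\infty$ at $\xi=1$, so for $\hat{h}\ne 0$ the point $\xi=1$ can never be a zero of $1-\hat{h}\,w^\alpha$; indeed $R$ extends continuously there with $R(1)=0$, and the only admissible boundary zeros are those with $|\xi|=1$, $\xi\ne 1$, where $w^\alpha$ is continuous.

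For the sufficiency direction I would expand $R(\xi)=\sum_{k\ge 0}r_k\xi^k$ and show that, when $1-\hat{h}\,w^\alpha$ is zero-free on $|\xi|\le 1$, the local behaviour $R(\xi)\sim -\alpha\,\hat{h}^{-1}(1-\xi)^{\alpha}$ near $\xi=1$ yields the decay $r_k=O(k^{-1-\alpha})$, so that $\{r_k\}\in\ell^1$; by Young's inequality $u_k=\sum_{j=0}^k r_{k-j}g_j$ is then bounded --- in fact convergent --- for every convergent $\{g_k\}$. Conversely, a zero $\xi_0$ with $|\xi_0|\le 1$ (necessarily $\xi_0\ne 1$) makes $R$ singular at $\xi_0$: taking a constant sequence $g_k\equiv c$, so that $G(\xi)=c/(1-\xi)$, the quotient $U(\xi)=c/\bigl((1-\xi)(1-\hat{h}\,w^\alpha(\xi))\bigr)$ inherits that singularity and therefore has unbounded coefficients $u_k$, which gives necessity and hence the exact identification of the stability region with $S$.

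The main obstacle is the inversion near $\xi=1$. Because of the integrable fractional singularity $(1-\xi)^{-\alpha}$, the symbol is \emph{not} an element of the Wiener algebra, so the classical Wiener--Lévy theorem does not apply directly to conclude that the reciprocal of a non-vanishing symbol lies in the same algebra. The crux is the refined convolution-stability theory of Lubich, which treats symbols of the form ``Wiener-algebra part plus a fixed fractional singularity at a single boundary point'' and certifies that zero-freeness of $1-\hat{h}\,w^\alpha$ on $|\xi|\le 1$ still guarantees a resolvent with summable, algebraically decaying coefficients. I would invoke that machinery (the cited \cite{Lubich19861}) for this step rather than reprove the fractional analogue of the Wiener lemma from scratch; all the surrounding reductions above are then routine generating-function manipulations.
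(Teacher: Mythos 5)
The paper never proves this lemma: it is stated with ``cf.~\cite{Lubich19861}'' and is simply quoted from Lubich's stability analysis, with no argument in the appendix. So there is no internal proof to compare against, and your proposal is in effect a reconstruction of the argument in the cited reference. Your scaffolding is correct and matches that source's strategy: the generating-function identity $(1-\hat{h}\,w^\alpha(\xi))U(\xi)=G(\xi)$, the decomposition $w^\alpha(\xi)=\Gamma(\alpha+1)^{-1}\mathrm{Li}_{1-\alpha}(\xi)+V(\xi)$ with $V$ in the Wiener algebra, the boundary asymptotics $w^\alpha(\xi)\sim\alpha^{-1}(1-\xi)^{-\alpha}$ (so $\xi=1$ is never a zero of the symbol and $R(1)=0$), and—most importantly—the correct identification of the crux, namely that the symbol is not in the Wiener algebra, so the classical Wiener--L\'evy theorem cannot be used to invert it and Lubich's refined ``Wiener algebra plus fractional boundary singularity'' theory is needed. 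Deferring that step to \cite{Lubich19861} is no weaker than what the paper does, since the paper defers the entire lemma.

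There is, however, a genuine gap in your necessity direction for boundary zeros. If $1-\hat{h}\,w^\alpha(\xi_0)=0$ with $|\xi_0|=1$, $\xi_0\ne 1$, you take $g_k\equiv c$ and claim that $U(\xi)=c/\bigl((1-\xi)(1-\hat{h}\,w^\alpha(\xi))\bigr)$ ``inherits that singularity and therefore has unbounded coefficients.'' That inference is false in general: a power series can blow up at a boundary point while its coefficients remain bounded, the standard counterexample being $\sum_{k\ge 0}\xi^k=1/(1-\xi)$ itself. Boundedness $|u_k|\le M$ only gives $|U(\xi)|\le M/(1-|\xi|)$, so mere divergence of $U$ near $\xi_0$ produces no contradiction; you would need the blow-up along the radius to $\xi_0$ to be strictly faster than $(1-|\xi|)^{-1}$, and the vanishing order of $1-\hat{h}\,w^\alpha$ at a boundary zero is not controlled by anything in your argument. (For zeros in the open disk your reasoning is fine, since a pole of $R$ inside forces geometric growth of its coefficients.) The boundary case requires either quantitative information on the zero or a resonance/uniform-boundedness argument, and this is precisely part of what the cited Lubich paper supplies; as written, your converse has a hole there.
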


We first consider the linear stability of the IMEX(0) for the test equation \eqref{4.1}, it holds that
\begin{align}\label{4.2}
u_{k+1} =& u_k + \frac{\lambda h^\alpha}{\Gamma(\alpha + 1)} u_{k+1} +  h^\alpha\left( \lambda\sum_{j=1}^{m_u} W_{k,j}^{(\alpha,\sigma,0)} + \rho \sum_{j=1}^{m_f} W_{k,j}^{(\alpha,\delta,0)} \right) (u_{j} - u_0) \nonumber\\
& + \frac{\rho h^\alpha}{\Gamma(\alpha+1)} \left[u_{k} + \sum_{j=1}^{\tilde{m}_f} W_{k,j}^{(\delta,0)} (u_j - u_0) \right] - \frac{1}{\Gamma(\alpha) \Gamma(2-\alpha)} \sum_{j=0}^{k} \gamma_{k,j}^{(0)} u_j \nonumber\\
& - \sum_{j=1}^{\tilde{m}_u} \tilde{W}_{k,j}^{(\alpha,\sigma,0)} (u_{j} - u_0) \nonumber\\
= & u_k \!+\! \frac{h^\alpha}{\Gamma(\alpha \!+\! 1)} (\lambda u_{k+1} \!+\! \rho u_{k}) \!-\! \frac{1}{\Gamma(\alpha) \Gamma(2\!-\!\alpha)} \sum_{j=0}^{k} \gamma_{k,j}^{(0)} u_j \!+\! \sum_{j=1}^M W_{k,j} (u_{j} \!-\! u_0),
\end{align}
where
\begin{align*}
\sum_{j=1}^M W_{k,j} (u_{j} - u_0) = &  \lambda h^\alpha \sum_{j=1}^{m_u} W_{k,j}^{(\alpha,\sigma,0)} (u_{j} - u_0) + \rho h^\alpha \sum_{j=1}^{m_f} W_{k,j}^{(\alpha,\delta,0)} (u_{j} - u_0) \nonumber\\
& + \frac{\rho h^\alpha}{\Gamma(\alpha+1)} \sum_{j=1}^{\tilde{m}_f} W_{k,j}^{(\delta,0)} (u_j - u_0) - \sum_{j=1}^{\tilde{m}_u} \tilde{W}_{k,j}^{(\alpha,\sigma,0)} (u_{j} - u_0).
\end{align*}
Since $\sum\limits_{j=1}^M W_{k,j} (u_{j} - u_0)$ does not affect the stability analysis, so we don't give the exact expression of $W_{k,j}$. Denote $U(\xi) = \sum\limits_{k=0}^\infty u_k \xi^k,~|\xi| \le 1$. Then it follows from \eqref{4.2} that
\begin{align*}
\sum_{k=0}^\infty u_{k+1} \xi^k =& \sum_{k=0}^\infty u_k \xi^k + \frac{\lambda h^\alpha}{\Gamma(\alpha + 1)} \sum_{k=0}^\infty u_{k+1} \xi^k + \frac{\rho h^\alpha}{\Gamma(\alpha + 1)} \sum_{k=0}^\infty u_{k} \xi^k \nonumber\\
&- \frac{1}{\Gamma(\alpha)\Gamma(2-\alpha)} \sum_{k=0}^\infty \left(\sum_{j=0}^{k} \gamma_{k,j}^{(0)} u_{j} \right) \xi^k + \sum_{k=0}^\infty \sum_{j=1}^M W_{k,j} (u_j - u_0) \xi^k,
\end{align*}
which leads to
\begin{align}\label{4.3}
\frac{1}{\xi} (U(\xi) &- u_0) = U(\xi) + \frac{\lambda h^\alpha}{\Gamma(\alpha + 1) \xi} \left(U(\xi) - u_0\right) + \frac{\rho h^\alpha}{\Gamma(\alpha + 1)}U(\xi) \nonumber\\
& - \frac{1}{\Gamma(\alpha)\Gamma(2-\alpha)} \gamma_{k}^{(0)}(\xi) U(\xi) + \sum_{k=1}^\infty \sum_{j=1}^M W_{k,j} (u_j - u_0) \xi^k,
\end{align}
where $\gamma_{k}^{(0)}(\xi) = \sum\limits_{j=0}^\infty \gamma^{(0)}_{k,j} \xi^j$. We simplify \eqref{4.3} as
\begin{align*}
&\left[1-\xi-\frac{\lambda h^\alpha}{\Gamma(\alpha + 1)} - \frac{\rho h^\alpha}{\Gamma(\alpha + 1)} \xi + \frac{1}{\Gamma(\alpha)\Gamma(2-\alpha)} \gamma_{k}^{(0)}(\xi) \xi \right] U(\xi) = H(\xi),
\end{align*}
where
\begin{align*}
H(\xi) = &\sum_{k=0}^\infty H_k \xi^k = \left[1- \frac{\lambda h^\alpha}{\Gamma(\alpha + 1)} \right] u_0 + \sum_{k=1}^\infty \sum_{j=1}^m W_{k,j} (u_j - u_0) \xi^{k+1}.
\end{align*}
By using Lemma \ref{lemma2.4}, when 
\[\sigma_{m_u},\delta_{m_f},\delta_{\tilde{m}_f} < 1,~~~\sigma_{\tilde{m}_u} < \alpha +1,\] 
we can obtain that $\left\{H_k \right\}$ is a convergent sequence. Moreover, we know from Lemma \ref{lemma2.3} that $\sum\limits_{j=0}^\infty \left|\gamma_{k,j}^{(0)}\right| < \infty$. Then it follows from Lemma \ref{lemma4.1} that method IMEX(0) is stable if 
\begin{align*}
1-\xi-\frac{\lambda h^\alpha}{\Gamma(\alpha + 1)} - \frac{\rho h^\alpha}{\Gamma(\alpha + 1)} \xi + \frac{1}{\Gamma(\alpha)\Gamma(2-\alpha)} \gamma_{k}^{(0)}(\xi) \xi \ne 0,~~~\forall |\xi| \le 1.
\end{align*}
Similarly, we can obtain the stability region of the method IMEX(1). Then we have the following theorem.

\begin{theorem}\label{theorem:stability}
Let $\rho = \kappa \lambda$ and $\hat{h} = \lambda h^\alpha$. Then for $\sigma_{m_u},\delta_{m_f},\delta_{\tilde{m}_f} < p+1,~\sigma_{\tilde{m}_u} < \alpha + p + 1$, we have the stability region of IMEX(0):
\begin{align*}
S_{0} = \mathbb{C} \setminus \left\{ \hat{h} : \hat{h} = \frac{\Gamma(\alpha+1)}{1 + \kappa\xi} \left(1 - \xi + \frac{\gamma_{k}^{(0)}(\xi) \xi}{\Gamma(\alpha)\Gamma(2-\alpha)} \right),~|\xi| \le 1\right\}.
\end{align*}
and the stability region of IMEX(1):
\begin{align*}
S_{1} = \mathbb{C} \setminus \left\{ \hat{h} : \hat{h} = \frac{\Gamma(\alpha+2)}{(1\!+\!\kappa)(\alpha\xi \!+\! 1) \!-\! \kappa (\xi\!-\!1)^2} \left(1 - \xi + \frac{\gamma_{k}^{(1)}(\xi) \xi}{\Gamma(\alpha)\Gamma(2-\alpha)} \right),~|\xi| \le 1\right\},
\end{align*}
where $\gamma_{k}^{(1)}(\xi) = \sum\limits_{j=0}^\infty \gamma^{(1)}_{k,j} \xi^j$.
\end{theorem}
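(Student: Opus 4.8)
The plan is to verify, for each method, the hypotheses of the Lubich criterion in Lemma \ref{lemma4.1}: I would recast the IMEX scheme applied to the test equation \eqref{4.1} as a convolution quadrature $u_k = g_k + \hat{h}\sum_j w_{k-j}u_j$ with a \emph{convergent} inhomogeneous sequence $\{H_k\}$, read off the symbol from the generating-function identity, and then solve the characteristic equation on the boundary $|\xi|\le 1$ to obtain the excluded parametrized curve whose complement is the stability region.

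For IMEX(0) almost all of the work is already done in the derivation preceding the theorem, which reduces stability to the nonvanishing of $1-\xi-\frac{\lambda h^\alpha}{\Gamma(\alpha+1)}-\frac{\rho h^\alpha}{\Gamma(\alpha+1)}\xi+\frac{\gamma_k^{(0)}(\xi)\xi}{\Gamma(\alpha)\Gamma(2-\alpha)}$ for all $|\xi|\le 1$. I would substitute $\rho=\kappa\lambda$ and $\hat{h}=\lambda h^\alpha$, collect the two middle terms as $\frac{\hat{h}}{\Gamma(\alpha+1)}(1+\kappa\xi)$, and solve the boundary equation for $\hat{h}$; the result is exactly the curve defining $S_0$, whose complement in $\mathbb{C}$ is the claimed region. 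The genuinely new computation is IMEX(1), which I would carry out by mirroring the IMEX(0) argument with $p=1$. Writing \eqref{3.14} for $f(t,u)=\rho u$ with $\beta_0^{(1)}=1/\Gamma(\alpha+2)$, $\beta_1^{(1)}=\alpha/\Gamma(\alpha+2)$ and $E_k^1 f_k=2f_k-f_{k-1}$, the decisive first observation is that the implicit force contribution $\beta_0^{(1)}\rho u_{k+1}$ coming from $I_{h,m_f}^{\alpha,\delta,1}f_{k+1}$ is cancelled precisely by the $-\beta_0^{(1)}f_{k+1}$ term in the bracket of \eqref{3.14}; this cancellation is what makes the force explicit, leaving only the lagged values $u_k$ and $u_{k-1}$ with coefficients $(2\beta_0^{(1)}+\beta_1^{(1)})\rho$ and $-\beta_0^{(1)}\rho$. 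Introducing $U(\xi)=\sum_{k\ge 0}u_k\xi^k$ and using $\sum_k u_{k+1}\xi^k=\xi^{-1}(U-u_0)$ together with $\sum_k u_{k-1}\xi^k=\xi U$, I would collect the coefficient of $U(\xi)$, multiply by $\xi$, and set $\rho=\kappa\lambda$, $\hat{h}=\lambda h^\alpha$.

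Solving the resulting boundary equation for $\hat{h}$ produces the denominator $\beta_0^{(1)}+\beta_1^{(1)}\xi+\kappa(2\beta_0^{(1)}+\beta_1^{(1)})\xi-\kappa\beta_0^{(1)}\xi^2$; inserting the numerical values and checking the algebraic identity $1+\alpha\xi+\kappa(2+\alpha)\xi-\kappa\xi^2=(1+\kappa)(\alpha\xi+1)-\kappa(\xi-1)^2$ yields exactly the denominator in $S_1$, while the numerator $1-\xi+\gamma_k^{(1)}(\xi)\xi/[\Gamma(\alpha)\Gamma(2-\alpha)]$ transfers verbatim. To license Lemma \ref{lemma4.1} in both cases I would confirm that $\{H_k\}$, assembled from the four families of correction sums, is convergent: Lemma \ref{lemma2.4} supplies $|W_{k,j}^{(\alpha,\sigma,p)}|=\mathcal{O}((k+1)^{\sigma_{m_u}-p-1})$ and the analogous decay for the remaining weights, so under the hypotheses $\sigma_{m_u},\delta_{m_f},\delta_{\tilde m_f}<p+1$ and $\sigma_{\tilde m_u}<\alpha+p+1$ every correction contribution is summable and $\{H_k\}$ converges, while Lemma \ref{lemma2.3} gives $\sum_j|\gamma_{k,j}^{(p)}|<\infty$ so that the symbol $\gamma_k^{(p)}(\xi)$ is well defined and continuous on $|\xi|\le 1$. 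The main obstacle is bookkeeping rather than conceptual: faithfully tracking all correction families through the generating-function step, securing the $u_{k+1}$-cancellation in IMEX(1), and carrying out the error-prone factorization of the denominator. With these in place, Lemma \ref{lemma4.1} identifies each stability region as the complement of the zero set of its symbol, which completes the proof.
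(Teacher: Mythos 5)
Your proposal is correct and follows essentially the same route as the paper: the generating-function reduction to Lemma \ref{lemma4.1}, with Lemma \ref{lemma2.4} guaranteeing convergence of the inhomogeneous sequence $\{H_k\}$ and Lemma \ref{lemma2.3} giving $\sum_j |\gamma_{k,j}^{(p)}| < \infty$, then solving the boundary equation for $\hat{h}$. The one place you go beyond the paper's text is a virtue rather than a deviation: the paper dispatches IMEX(1) with ``similarly,'' whereas you actually carry out that case --- the cancellation of $\beta_0^{(1)} f_{k+1}$ against the bracket term, the resulting explicit coefficients $(2\beta_0^{(1)}+\beta_1^{(1)})\rho$ and $-\beta_0^{(1)}\rho$ on $u_k$ and $u_{k-1}$, and the factorization $1+\alpha\xi+\kappa(2+\alpha)\xi-\kappa\xi^2=(1+\kappa)(\alpha\xi+1)-\kappa(\xi-1)^2$ --- all of which check out and reproduce the stated denominator of $S_1$.
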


In Figure \ref{fig:1} (a)-(c), we plot the stability regions of the method IMEX(0) with $\alpha = 0.2,0.5,0.8$ and $\rho = 0.5 \lambda$, respectively. We also plot the stability regions of the method IMEX(1) with $\alpha = 0.2,0.5,0.8$ and $\rho = 0.5 \lambda$ in Figure \ref{fig:1} (d). As the functions $\gamma_{k}^{(p)}(\xi) = \sum\limits_{j=0}^\infty \gamma_{k,j}^{(p)} \xi^j$ is not explicitly known, so in all these figures we take $k=10^5$.

\begin{figure}[!htbp]
\subfigure[]{ \centering
\includegraphics[width=0.48\textwidth,height=0.38\textwidth]{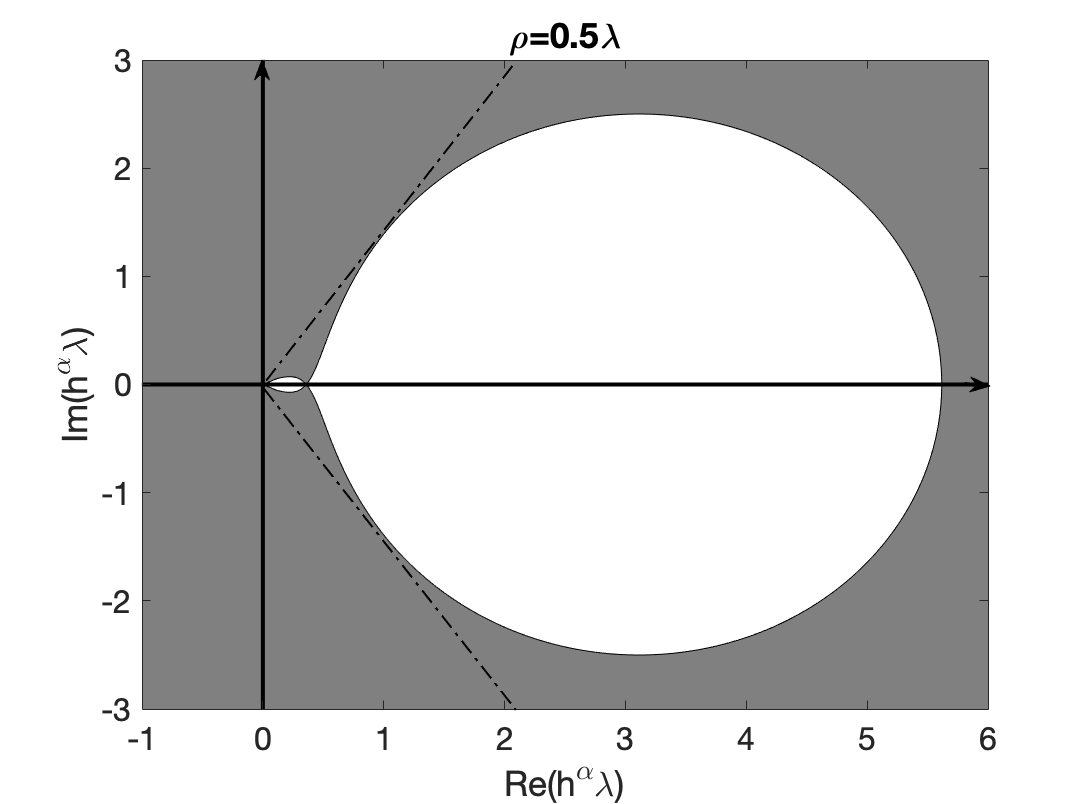}}
\subfigure[]{ \centering
\includegraphics[width=0.48\textwidth,height=0.38\textwidth]{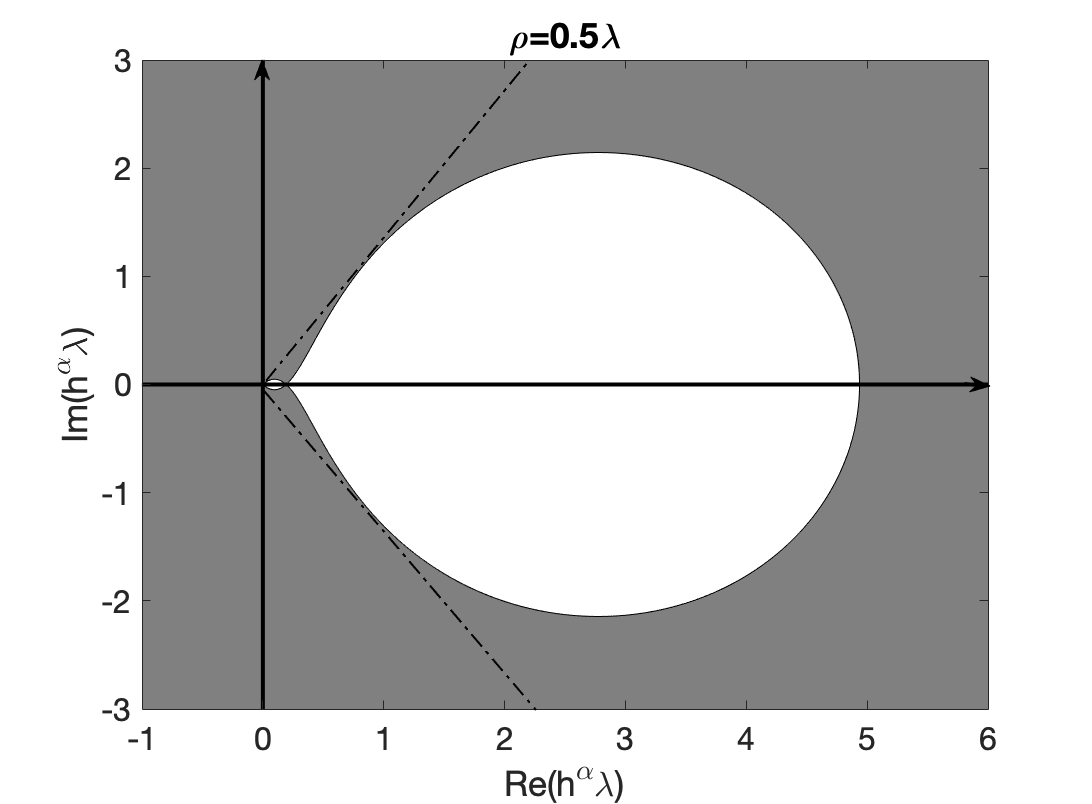}}
\subfigure[]{ \centering
\includegraphics[width=0.48\textwidth,height=0.38\textwidth]{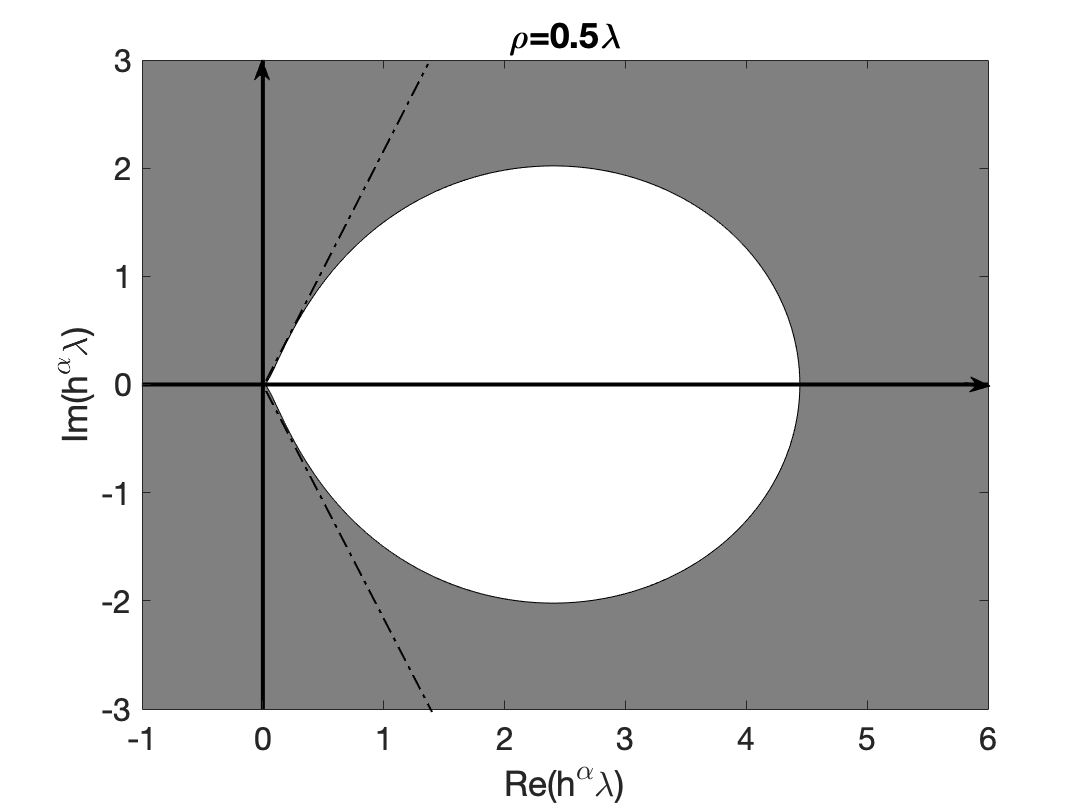}}
\subfigure[]{ \centering
\includegraphics[width=0.48\textwidth,height=0.38\textwidth]{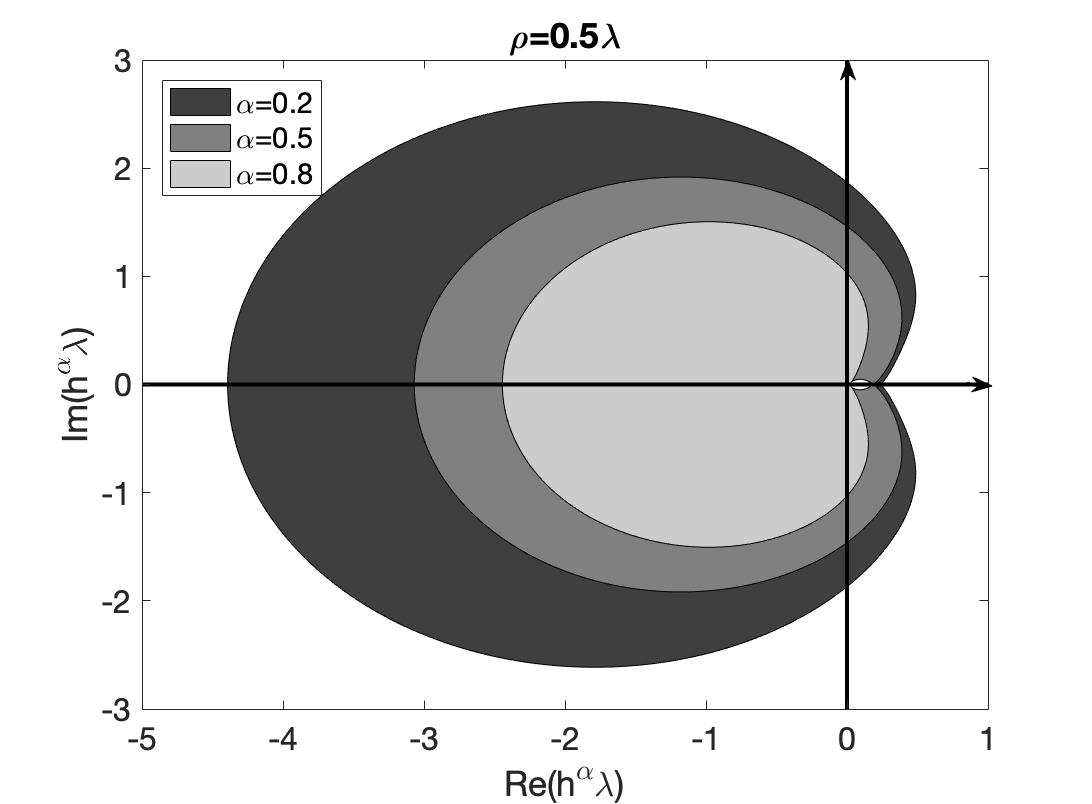}}
\caption{(a) Stability region of IMEX(0) with $\alpha = 0.2$ and $\rho = 0.5 \lambda$; (b) Stability region of IMEX(0) with $\alpha = 0.5$ and $\rho = 0.5 \lambda$; (c) Stability region of IMEX(0) with $\alpha = 0.8$ and $\rho = 0.5 \lambda$; (d) Stability region of the IMEX(1) with different $\alpha$ and $\rho = 0.5 \lambda$.}
\label{fig:1}
\end{figure}

\section{The Fast Implementation of IMEX($p$) Methods}
\label{section4}

The step-by-step numerical solution of \eqref{3.14} for $N$ time-steps requires $\mathcal{O}(N^2)$ evaluations of the time-dependent coefficients given by the hypergeometric functions, making the scheme expensive. Hence, we rewrite \eqref{3.14} as the matrices form, where the corresponding convolution matrices of coefficients have the Toeplitz structure and thus we leverage the use of FFTs to obtain the solution of the problem with complexity $\mathcal{O}(N \log N)$. For simplicity and objectivity, we demonstrate the procedure only for the IMEX(0) scheme, for which we start by introducing the notations 
\[U=(u_{1},u_{2},\ldots,u_N)^T,~~~F(U)=(f_{1},f_{2},\ldots,f_N)^T,\]
where $U$ denotes the unknown solution vector. Then IMEX(0) can be written in a compact form:
\begin{align}\label{5.1}
(A\otimes I_d - \lambda B \otimes I_d)U = h^\alpha (B \otimes I_d + C \otimes I_d) F(U) + D,
\end{align}
where $\otimes$ denotes the Kronecker product and $I_d$ represents the $d \times d$ identity matrix. Here the value of $d$ can represents, for instance, the number of equations for a system of nonlinear FDEs. Furthermore, we have:
\begin{align}\label{eq:A}
A= \left[\! {\begin{array}{@{}ccccc@{}}
{1}&{}&{}&{}&{}\\[2\jot]
{-1 \!+\! \dfrac{\gamma_{1,1}^{(0)}}{\Gamma(\alpha) \Gamma(2\!-\!\alpha)}} & {1} & {} & {} & {}\\[2\jot]
{\dfrac{\gamma_{2,1}^{(0)}}{\Gamma(\alpha) \Gamma(2\!-\!\alpha)}} & {-1 \!+\! \dfrac{\gamma_{2,2}^{(0)}}{\Gamma(\alpha) \Gamma(2\!-\!\alpha)}} & {1} & {} & {}\\[2\jot]
\vdots & \vdots & \ddots &\ddots &{}\\[2\jot]
{\dfrac{\gamma_{N-1,1}^{(0)}}{\Gamma(\alpha) \Gamma(2\!-\!\alpha)}} & {\dfrac{\gamma_{N-1,2}^{(0)}}{\Gamma(\alpha) \Gamma(2\!-\!\alpha)}} & {\cdots} & {-1 \!+\! \dfrac{\gamma_{N-1,N-1}^{(0)}}{\Gamma(\alpha) \Gamma(2\!-\!\alpha)}} & {1}
\end{array}} \right] \in \mathbb{R}^{N{\times}N},
\end{align}
\begin{align}\label{eq:BC}
B = \frac{1}{\Gamma(\alpha+1)}I_{N},
~~~C = \frac{1}{\Gamma(\alpha+1)}\left[ {\begin{array}{@{}ccccc@{}}
{-1} & {} & {} & {} & {}\\
{1} & {-1} & {} & {} & {}\\
{0} & {1} & {-1} & {} & {}\\
\vdots & \ddots & \ddots & \ddots & {}\\
{0} & {\cdots} & {0} & {1} & {-1}
\end{array}} \right] \in \mathbb{R}^{N{\times}N},
\end{align}
and $D$ is a vector related to the corrections and solutions for the initial steps, in the following way:
\begin{align}\label{eq:Dvector}
    D = & \lambda h^\alpha (W_{m_u} \otimes I_d) U_{m_u} + h^\alpha (W_{m_f} \otimes I_d) F(U_{m_f}) - (W_{\tilde{m}_u} \otimes I_d) U_{\tilde{m}_u} \nonumber\\
    & + \frac{h^\alpha}{\Gamma(\alpha+1)} (W_{\tilde{m}_f} \otimes I_d ) F(U_{\tilde{m}_f}) + (a_0\otimes I_d) u_0 + h^\alpha (b_0\otimes I_d) f_0,
\end{align}
where the correction weights $W_{\hat{M}}~(\hat{M} = m_u, m_f, \tilde{m}_u, \tilde{m}_f)$ are denoted by the $N\times \hat{M}$ matrices with the element $W_{k,j}^{(\alpha,\sigma,0)}, W_{k,j}^{(\alpha,\delta,0)}, \tilde{W}_{k,j}^{(\alpha,\sigma,0)}$ and $\tilde{W}_{k,j}^{(\delta,0)}$, respectively, for $k=0,1,\ldots,N-1$ and $j=1, 2,\ldots,\hat{M}$. We also have,
\begin{equation*}
    a_0 = \left(1,\frac{\gamma_{1,0}^{(0)}}{\Gamma(\alpha) \Gamma(2-\alpha)},\frac{\gamma_{2,0}^{(0)}}{\Gamma(\alpha) \Gamma(2-\alpha)},\ldots,\frac{\gamma_{N-1,0}^{(0)}}{\Gamma(\alpha) \Gamma(2-\alpha)} \right)^T \in \mathbb{R}^N,
\end{equation*}
\begin{equation*}
    b_0 = \left(\frac{1}{\Gamma(\alpha+1)},0,\ldots,0\right)^T \in \mathbb{R}^N,~~~U_{\hat{M}} = \left( u_1 - u_0 ,u_2 - u_0, \ldots, u_{\hat{M}} - u_0\right)^T.
\end{equation*}
From \eqref{2.13}, we observe that $\mathcal{A}_{k+1,j+1} = \mathcal{A}_{k,j}$, and therefore $A$ is a lower-triangular Toeplitz matrix.

In what follows, we will analyze the unique solvability of the IMEX(0) scheme. For this purpose, we introduce the mapping $\Phi_h: \mathbb{R}^{Nd} \to \mathbb{R}^{Nd}$ as follows:
\[\Phi_h(X) := h^\alpha \left[(A - \lambda B)^{-1} (B + C) \otimes I_d \right] F(X) + \left[(A - \lambda B)^{-1} \otimes I_d \right] D,\]
and therefore, we have the following result.

\begin{theorem}
Suppose that Lipschitz condition \eqref{3.2} holds and
\begin{align}\label{5.2}
h^\alpha L \|(A - \lambda B)^{-1} (B + C)\|_\infty < 1. 
\end{align}
Then the method IMEX(0) has a unique solution $U \in \mathbb{R}^{Nd}$.
\end{theorem}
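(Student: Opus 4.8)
The plan is to recognize that the nonlinear system \eqref{5.1} is exactly the fixed-point equation $U = \Phi_h(U)$, and then to establish existence and uniqueness by the Banach contraction mapping principle on $\mathbb{R}^{Nd}$ equipped with the maximum norm $\|\cdot\|_\infty$. Before invoking the fixed-point theorem, I would first confirm that $\Phi_h$ is well-defined, which reduces to checking that $A - \lambda B$ is invertible. Since $A$ in \eqref{eq:A} is lower-triangular Toeplitz with unit diagonal and $B = \frac{1}{\Gamma(\alpha+1)} I_N$ from \eqref{eq:BC}, the matrix $A - \lambda B$ is again lower-triangular with constant diagonal entries $1 - \lambda/\Gamma(\alpha+1)$; these are nonzero for the relevant range of $\lambda$ (which is tacitly required for \eqref{5.1} and the condition \eqref{5.2} to be meaningful), so $(A-\lambda B)^{-1}$ exists and $\Phi_h$ maps $\mathbb{R}^{Nd}$ into itself.

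The core of the argument is the contraction estimate. For arbitrary $X, Y \in \mathbb{R}^{Nd}$, subtracting the two definitions of $\Phi_h$ makes the correction/initial-data vector $D$ cancel, leaving
\begin{align*}
\Phi_h(X) - \Phi_h(Y) = h^\alpha \left[(A - \lambda B)^{-1}(B+C) \otimes I_d\right] \left(F(X) - F(Y)\right).
\end{align*}
Taking $\|\cdot\|_\infty$ and using the submultiplicativity of the induced matrix norm together with the identity $\|M \otimes I_d\|_\infty = \|M\|_\infty$ — valid because each row of $M \otimes I_d$ carries the same absolute row sum as the corresponding row of $M$ — I would bound
\begin{align*}
\|\Phi_h(X) - \Phi_h(Y)\|_\infty \le h^\alpha \,\|(A-\lambda B)^{-1}(B+C)\|_\infty \, \|F(X) - F(Y)\|_\infty.
\end{align*}
Next I would invoke the Lipschitz condition \eqref{3.2}: applied entrywise to the stacked components of $F(X)$ and $F(Y)$, it gives $\|F(X) - F(Y)\|_\infty \le L \|X - Y\|_\infty$. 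Combining the two estimates yields
\begin{align*}
\|\Phi_h(X) - \Phi_h(Y)\|_\infty \le h^\alpha L \,\|(A-\lambda B)^{-1}(B+C)\|_\infty \, \|X - Y\|_\infty.
\end{align*}

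Under hypothesis \eqref{5.2} the factor $h^\alpha L \|(A-\lambda B)^{-1}(B+C)\|_\infty$ is strictly less than $1$, so $\Phi_h$ is a contraction; the Banach fixed-point theorem then delivers a unique fixed point $U \in \mathbb{R}^{Nd}$, which is precisely the unique solution of the IMEX(0) scheme. The argument is essentially a standard contraction-mapping proof, and there is no deep obstacle; the only mildly delicate points are justifying the Kronecker norm identity $\|M \otimes I_d\|_\infty = \|M\|_\infty$ and verifying that the scalar-valued Lipschitz bound \eqref{3.2} lifts to the stacked vector field $F$. Once these bookkeeping facts are in place, the invertibility of $A - \lambda B$ and the contraction step follow immediately.
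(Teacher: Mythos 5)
Your proposal is correct and follows essentially the same route as the paper: both establish the contraction estimate $\|\Phi_h(X)-\Phi_h(\hat X)\|_\infty \le h^\alpha L\, \|(A-\lambda B)^{-1}(B+C)\|_\infty\, \|X-\hat X\|_\infty$ via the Lipschitz condition \eqref{3.2} and then invoke the Banach fixed-point theorem on the complete space $(\mathbb{R}^{Nd}, \|\cdot\|_\infty)$. Your added remarks on the invertibility of $A-\lambda B$ and the Kronecker norm identity $\|M\otimes I_d\|_\infty = \|M\|_\infty$ are sound bookkeeping details that the paper leaves implicit.
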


\begin{proof}
Let $X = \left(x_{1}^T,x_{2}^T,\ldots,x_N^T\right)^T,~\hat{X} = \left(\hat{x}_{1}^T,\hat{x}_{2}^T,\ldots,\hat{x}_N^T\right)^T$ be two arbitrary vectors in $\mathbb{R}^{Nd}$. It follows from the Lipschitz condition \eqref{3.2} that
\begin{align*}
\left\|\Phi_h(X) - \Phi_h(\hat{X}) \right\|_\infty \le & h^\alpha \left\|(A - \lambda B)^{-1} (B + C)\right\|_\infty \left\|F(X) - F(\hat{X})\right\|_\infty \nonumber\\
\le & h^\alpha L \left\|(A - \lambda B)^{-1} (B + C)\right\|_\infty \left\|X - \hat{X}\right\|_\infty.
\end{align*}
If condition \eqref{5.2} holds, we know that $\Phi_h(X)$ is a contraction mapping with contraction factor $h^\alpha L \|(A - \lambda B)^{-1} (B + C)\|_\infty$. Moreover, it is well known that space $\mathbb{R}^{Nd}$ with norm $\|\cdot\|_\infty$ is complete. Hence, according to the Banach contraction mapping principle (see e.g. \cite{Agarwal2001}), mapping $\Phi_h(X)$ has a unique fixed point in $\mathbb{R}^{Nd}$. Namely, the method IMEX(0) has a unique solution $U \in \mathbb{R}^{Nd}$.
~~~\end{proof}

\begin{remark}
It should be pointed that, the coefficient matrices $A$ for IMEX(1) is not the Toeplitz matrices, but we can choose the first column $a_1$ of $A$ as 
\[\hat{a}_1 = \left(1,-1 \!+\! \dfrac{\gamma_{2,2}^{(1)}}{\Gamma(\alpha) \Gamma(2\!-\!\alpha)},\dfrac{\gamma_{3,2}^{(1)}}{\Gamma(\alpha) \Gamma(2\!-\!\alpha)},\ldots,\dfrac{\gamma_{N,2}^{(1)}}{\Gamma(\alpha) \Gamma(2\!-\!\alpha)} \right)^T \in \mathbb{R}^N.\]
Then $A$ will be a Toeplitz matrix. Moreover, if we do this, the corresponding vector $D$ for IMEX(1) will be change by adding the term $\left[(\hat{a}_1 - a_1) \otimes I_d \right] U$.
\end{remark}

\subsection{Fast Approximate Inversion Scheme}
\label{Sec:FastInversion}

In order to obtain a fast solution to the Toeplitz system \eqref{5.1}, we employ the scheme developed in \cite{Lux2015}, which approximates the lower-triangular Toeplitz matrix $K^{-1}$. In particular, we have $K = (A\otimes I_d - \lambda B \otimes I_d)$ for the method IMEX(0) in \eqref{5.1}. The first step involves approximating the matrix $K$ by the following block $\epsilon$-circulant matrix:
\begin{equation}\label{eq:block_circulant}
    K_\epsilon = 
    \begin{bmatrix}
        K_0     & \epsilon K_{N-1} & \dots            & \epsilon K_2  & \epsilon K_1     \\
        K_1     & K_{0}            & \epsilon K_{N-1} & \dots         & \epsilon K_2     \\
        \vdots  & K_1              & K_0              & \ddots        & \vdots           \\
        K_{N-2} & \dots            & \ddots           & \ddots        & \epsilon K_{N-1} \\
        K_{N-1} & K_{N-2}          & \dots            & K_1           & K_0
    \end{bmatrix},
\end{equation}
with $\epsilon > 0$. It is reported by Lu \textit{et al.} \cite{Lux2015} that the accuracy of the fast inversion is $\mathcal{O}(\epsilon)$, where mathematically $\epsilon$ can be taken arbitrarily small. However, for double-precision arithmetic, $\epsilon$ cannot be set too small due to rounding errors. Numerical experiments demonstrated the smallest practical value to be $\epsilon = 5\times10^{-9}$. It is also shown in \cite{Lux2015} that $K^{-1}_\epsilon$ is also a block $\epsilon$-circulant matrix, and therefore the solution to system \eqref{5.1} can be written in the following way:
\begin{equation*}
    U  \approx K^{-1}_\epsilon R,
\end{equation*}
where $R$ denote the right-hand side of \eqref{5.1}. 

Let $D_{\varrho} = \mathrm{diag}(1,\varrho,\dots,\varrho^{N-1})$, with $\varrho = \epsilon^{1/N}$ be a diagonal matrix and $F_N$ be a $N\times N$ Fourier matrix. We then have the following spectral decomposition:
\begin{equation*}
    K^{-1}_\epsilon = \left[(D^{-1}_\varrho F^*_N) \otimes I_d\right] \mathrm{diag}\left(\Lambda^{-1}_0, \Lambda^{-1}_1, \dots, \Lambda^{-1}_{N-1} \right) \left[(F_N D_\varrho) \otimes I_d\right],
\end{equation*}
with 
\begin{equation}\label{5.3}
    \begin{bmatrix} \Lambda_0 \\ \Lambda_1 \\ \vdots \\ \Lambda_{N-1} \end{bmatrix} = 
    \left[ (\sqrt{N} F_{N} D_\varrho) \otimes I_d \right]
    \begin{bmatrix} K_0 \\ K_1 \\ \vdots \\ K_{N-1} \end{bmatrix}.
\end{equation}
Finally, the approximate solution for $U$ becomes:
\begin{equation}\label{5.4}
    U \approx \left[(D^{-1}_\varrho F^*_{N}) \otimes I_d\right] \mathrm{diag}\left(\Lambda^{-1}_0, \Lambda^{-1}_1, \dots, \Lambda^{-1}_{N-1} \right) \left[(F_{N} D_\varrho) \otimes I_d\right] R,
\end{equation}
where in practical implementations, we replace the Fourier matrices $F_{N}$ in \eqref{5.3} and \eqref{5.4} with FFT operations in order to achieve a computational complexity of $\mathcal{O}(N\log N)$. The other operations to form $R$ do not require FFTs, since matrices $B$ and $C$ are sparse, lower-Toeplitz nature, and the vector $D$ is formed through the multiplication of tall matrices with small vectors.

\begin{remark}
Note that \eqref{5.4} is a nonlinear system, therefore iterative solver should be applied to solve this problem. As is known, the Newton iteration method may be the most popular solver for a general system of nonlinear equations $G(x) = 0$. However, the disadvantages of the Newton iteration method is that, at each iteration step, it requires the explicit form of the $N \times N$ Jacobian matrix $G'(x^{(n)})$, where $x^{(n)}$ denotes the $n$th-approximation to $x$. So the computation of the Newton iteration method could be much more expensive. In order to overcome this disadvantage, Picard iteration method has been used to solve the system \eqref{5.4}, where, for a given iteration $n$, we have:
\begin{equation}
    U^{(n+1)} = K^{-1}_\epsilon R(U^{(n)}),
\end{equation}
until $|| U^{(n+1)} - U^{(n)} || > \epsilon^p$, where $U^{(n)}$ denotes the $n$th-approximation to $U$ and $\epsilon^p$ represents the tolerance of the Picard iteration scheme.
\end{remark}

\subsection{Fast Computation of Hypergeometric Functions}

Accurate and efficient computations of the Gauss hypergeometric function ${}_2 F_1 (a,b;c;z)$ is also fundamental to the developed scheme. From \cite{Pearson2009}, we know that there is no simple answer for this problem, and different methods are optimal for different parameter regimes. When $\Re(c) > \Re(a) > 0$ or $\Re(c) > \Re(b) > 0$, the Gauss-Jacobi quadrature method is effective. As stated in \cite{Abramowitz1965}, when $|\arg(1-z)|<\pi$, we have
\begin{align}\label{5.5}
{}_2 F_1 (a,b;c;z) = \frac{\Gamma(c)}{\Gamma(b)\Gamma(c-b)} \int_0^1 (1-zt)^{-a} w_{b,c}(t) dt,~~~\Re(c) > \Re(b) > 0,
\end{align}
where $w_{b,c}(t) = (1-t)^{c-b-1} t^{b-1}$. The parameters $a$ and $b$ in \eqref{5.5} can be interchanged due to the basic power series definition of the hypergeometric function. Transforming $t \to \frac{\tilde{t} + 1}{2}$, we can obtain that
\begin{align*}
\int_0^1 (1-zt)^{-a} w_{b,c}(t) d\tilde{t} =& \frac{1}{2^{c-1}} \int_{-1}^1 \left(1-\frac{1}{2} z -\frac{1}{2} z\tilde{t}  \right)^{-a} \left(1-\tilde{t} \right)^{c-b-1} \left(1+\tilde{t} \right)^{b-1} dt \\
=& \sum_{j=1}^{Q} w_j^{GJ} \left(1-\frac{1}{2} z -\frac{1}{2} zt_j^{GJ}  \right)^{-a} + E_{Q} (a,b,z),
\end{align*}
where $t_j^{GJ}$ and $w_j^{GJ}$ are the Gauss-Jacobi nodes and weights on $[-1,1]$, and $Q$ is the number of mesh points. Error bounds for this method are discussed in \cite{Gautschi2002}.

\subsection{Algorithm of IMEX($p$) Methods}

We present the main stages of the developed fast IMEX($p$) methods for efficient time-integration of nonlinear FDEs in Algorithm \ref{alg:1}. The algorithm particularly described the IMEX(0) approach, but the main steps remain the same for IMEX(1), with slight modifications regarding the number of sets of correction weights and forms for the nonlinear system matrices. The operators $\mathcal{F}(\cdot)$ and $\mathcal{F}^{-1}(\cdot)$ represent, respectively, the forward and inverse Fast Fourier Transforms.

\renewcommand\thealgorithm{I}
\begin{algorithm}[!htbp]
	\caption{Fast IMEX(0) Integration Scheme for Nonlinear FDEs.}
	\label{alg:1}
	\begin{algorithmic}[1]
	    \STATE{\textbf{Known database:}}
		\STATE{Initial parameters $h$, $N$, $\epsilon$, $\epsilon^p$, $\lambda$, number of correction terms and powers $\sigma, \delta$.}
		\STATE{Compute the first column $a_1$ of $A$ defined in \eqref{eq:A}. Compute $B$ and $C$ defined in \eqref{eq:BC} with sparse allocation.}
		\STATE{Compute corrections using \eqref{2.19}-\eqref{2.20}, \eqref{2.24}, \eqref{eq:corr_force1}-\eqref{eq:corr_force2} and \eqref{3.11} (or \eqref{3.12}) for all $N$ time-steps.}
		\STATE{Compute $D_\varrho$. Assemble the first column $K_{\epsilon,1}$ of $K_\epsilon$ in \eqref{eq:block_circulant}.}
		\STATE{Compute $\Lambda = \mathcal{F}((D_\varrho \otimes I_d) K_{\epsilon,1})$ to obtain \eqref{5.3}.}
		\STATE{\textbf{Picard Iteration:} Initial guess $U^{(0)}$.}
		\WHILE{$e > \epsilon^p$}
		    \STATE{Compute $D(U^{(n)})$ using \eqref{eq:Dvector}. Compute $F(U^{(n)})$.}
		    \STATE{Compute $R(U^{(n)})$ using \eqref{5.1}}
		    \STATE{Compute $r_\epsilon = \mathcal{F}\left((D_\varrho \otimes I_d) R(U^{(n)})\right)$}
		    \STATE{Solve $\tilde{r}_\epsilon = \Lambda^{-1} r_\epsilon$}
		    \STATE{Compute the updated solution vector $U^{(n+1)} = D^{-1}_\varrho \mathcal{F}^{-1}(\tilde{r}_\epsilon)$.}
		    \STATE{Compute $e = ||U^{(n+1)} - U^{(n)}||$.}
		    \STATE{$n = n+1$}		        
		\ENDWHILE
		\RETURN{$U^{(n+1)}$}
	\end{algorithmic}
\end{algorithm}

\begin{remark}\label{rem:FAM}
For the FAMMs \eqref{2.16}, \eqref{3.4} and the FAMMs with correction terms \eqref{2.26}, \eqref{3.8}, we also can use the fast implementation proposed in this section to construct the corresponding fast methods.
\end{remark}

\section{Numerical Tests}
\label{section5}

We present several numerical examples to verify our theoretical analysis presented in the previous sections. 
In all presented numerical examples, we utilize a numerical tolerance $\epsilon = 5 \times 10^{-9}$ for the fast inversion step. For all hypergeometric functions involved in the evaluation of correction weights and history load term, we utilize $Q = 200$ Gauss-Jacobi quadrature points. One exception is the incomplete beta function evaluated for the history load correction in \eqref{2.24}. For this case, the argument $\frac{k}{k+1}$ approaches 1 as $N$ increases, and a numerical quadrature becomes a poor choice due to singularities. In that sense we evaluate the incomplete beta function using the native MATLAB implementation. Furthermore, given $\Omega = (0,\,T]$, we utilize the following quantities:
\[\mbox{err}_N(h) = \frac{\left\| u(t_N) - u_N\right\|_{\infty}}{\left\| u(t_N)\right\|_{\infty}},~~~\mbox{err}(h) = \frac{\max\limits_{0 \le k \le N} \left\| u(t_k) - u_k\right\|_{\infty}}{\max\limits_{0 \le k \le N} \left\| u(t_k)\right\|_{\infty}},\]
\[\mbox{Order}_1 = \log_2 \left[\frac{\mbox{err}_N(h)}{\mbox{err}_N(h/2)}\right],~~~\mbox{Order}_2 = \log_2 \left[\frac{\mbox{err}(h)}{\mbox{err}(h/2)} \right]\]
to denote the error at the endpoint $T$, the global error on the solution interval $\Omega$ and convergence order of the used method at the endpoint $T$ and on the solution interval $\Omega$, respectively. The developed framework was implemented in MATLAB R2019a and was run in a desktop computer with Intel Core i7-6700 CPU with 3.40 GHz, 16 GB RAM and Ubuntu 18.04.2 LTS operating system. 

\begin{example}\label{Sec:Ex1}
Linear FDE (see e.g. \cite{Zayernouri2016MS}):
\begin{equation}\label{eq:linear_convergence}
    {}^C_{0} D_t^\alpha u(t) = f(t),~~\alpha \in (0,1],~t \in (0,1];~~~u(0) = 0.
\end{equation}
The exact solution of \eqref{eq:linear_convergence} is $u(t) = t^{p + \alpha}$ for $p=1,2$. Therefore the corresponding force term is $f(t) = \frac{\Gamma(\alpha + p + 1)}{\Gamma(p + 1)} t^p$. Recalling Remark \ref{rem:FAM}, we can employ the fast inversion scheme directly to the FAMM \eqref{2.16} in order to obtain a fast FAMM. Therefore, in this example we compare the performance between the fast and original FAMMs \eqref{2.16}, where we verify the computational complexity and accuracy of both original and fast schemes. 

Table \ref{tab:Conv_AM_ZM} presents the obtained results for the implemented FAMMs and at the endpoint $T=1$. Similar to the results in \cite{Zayernouri2016MS}, we observe that the convergence order is independent of the fractional order $\alpha$, preserving the accuracy of the integer-order methods. The computational times for the original and fast FAMMs are illustrated in Figure \ref{fig:CompTime_ZM}. We observe the computational complexity of $\mathcal{O}(N \log N)$ for the developed fast FAMMs. Since no break-even point is observed between both methods, the fast method is more computationally efficient regardless of the value of $N$. 
\begin{table}[!htbp]\label{tab:Conv_AM_ZM}
    \footnotesize
	\centering
	\caption{The errors at the endpoint and convergence orders of the FAMMs for \eqref{eq:linear_convergence} with $p = 0$ (upper table) and $p = 1$ (lower table).}
	\begin{tabular}{@{}lllllllll@{}}
	    \multicolumn{9}{c}{\textbf{The fast FAMM with $p = 0$}.} \\
		\toprule
		{}&\multicolumn{2}{c}{$\alpha=0.1$}&{}&\multicolumn{2}{c}{$\alpha=0.5$}&{}&\multicolumn{2}{c}{$\alpha=0.9$} \\
		\cline{2-3}\cline{5-6}\cline{8-9}
		$h$  & $\mbox{err}_N(h)$ & $\mbox{Order}_1$ & {} & $\mbox{err}_N(h)$ & $\mbox{Order}_1$ & {} & $\mbox{err}_N(h)$ & $\mbox{Order}_1$ \\ \midrule
		$2^{-3}$  & 5.2795e--03 &      -- & {} &  9.7878e--03 &     --  & {} & 1.0461e--03 &      -- \\
        $2^{-4}$  & 2.6031e--03 &  1.0202 & {} &  5.0535e--03 &  0.9537 & {} & 6.0576e--04 &  0.7883 \\
        $2^{-5}$  & 1.2912e--03 &  1.0115 & {} &  2.5658e--03 &  0.9779 & {} & 3.3733e--04 &  0.8446 \\
        $2^{-6}$  & 6.4272e--04 &  1.0064 & {} &  1.2925e--03 &  0.9892 & {} & 1.8357e--04 &  0.8778 \\
        $2^{-7}$  & 3.2058e--04 &  1.0035 & {} &  6.4866e--04 &  0.9947 & {} & 9.8414e--05 &  0.8994 \\
        \bottomrule
    \end{tabular}

    \vspace*{0.3cm}
    
	\begin{tabular}{@{}lllllllll@{}}
	    \multicolumn{9}{c}{\textbf{The fast FAMM with $p = 1$}.} \\
		\toprule
		{}&\multicolumn{2}{c}{$\alpha=0.1$}&{}&\multicolumn{2}{c}{$\alpha=0.5$}&{}&\multicolumn{2}{c}{$\alpha=0.9$} \\
		\cline{2-3}\cline{5-6}\cline{8-9}
		$h$  & $\mbox{err}_N(h)$ & $\mbox{Order}_1$ & {} & $\mbox{err}_N(h)$ & $\mbox{Order}_1$ & {} & $\mbox{err}_N(h)$ & $\mbox{Order}_1$ \\ \midrule
		$2^{-3}$   & 2.1934e--05 &      -- &  {} & 4.0975e--04 &      -- & {} & 4.9840e--04 &      -- \\
        $2^{-4}$   & 5.5334e--06 &  1.9869 &  {} & 9.6888e--05 &  2.0804 & {} & 1.2210e--04 &  2.0292 \\
        $2^{-5}$   & 1.4102e--06 &  1.9723 &  {} & 2.3574e--05 &  2.0391 & {} & 2.9761e--05 &  2.0366 \\
        $2^{-6}$   & 3.5857e--07 &  1.9756 &  {} & 5.8150e--06 &  2.0193 & {} & 7.2452e--06 &  2.0383 \\
        $2^{-7}$   & 8.9586e--08 &  2.0009 &  {} & 1.4452e--06 &  2.0085 & {} & 1.7663e--06 &  2.0363 \\
        \bottomrule
    \end{tabular}
\end{table}
\begin{figure}[!htbp]
    \centering
    \includegraphics[width=0.5\textwidth]{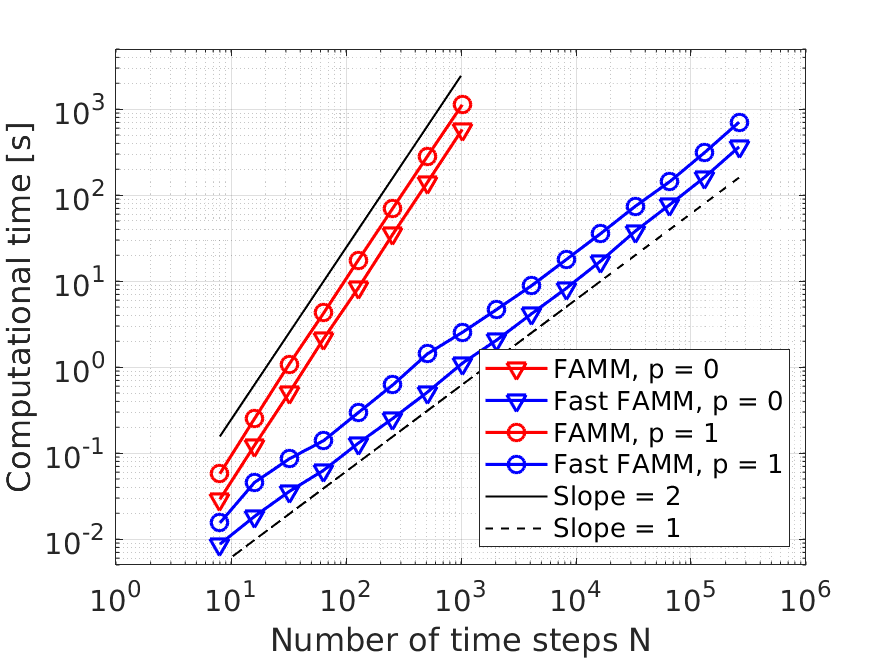}
    \caption{Computational time \textit{versus} number of time steps $N$ for the original (red curves) and the developed, fast (blue curves) FAMMs. \label{fig:CompTime_ZM}}
\end{figure}
\end{example}

\begin{example}\label{Sec:Ex2}
Stiff FDE (see e.g. \cite{Cao2016}):
\begin{equation} \label{eq:SFDE}
    {}^C_{0} D_t^\alpha u (t) = P u(t) + S u(t) + g(t),~~\alpha \in (0,1],~t \in (0,10];~~~u(0) = [1,\,1,\,1]^T,
\end{equation}
where
\begin{equation*}
    P = 
    \begin{bmatrix*}[r]
        -1      &  0      &  0.001 \\
        -0.0005 & -0.0008 & -0.0002 \\
        0.001   &  0      & -0.001
    \end{bmatrix*}, \quad
    S = 
    \begin{bmatrix*}[r]
        -0.006  &  0      &  0.002 \\
        -0.001  & -0.002  &  0     \\
         0      & -0.005  & -0.008
    \end{bmatrix*},
\end{equation*}
\begin{equation*}
    g(t) = \left( {\begin{array}{@{}c@{}}
        a_1 \Gamma_1 t^{\sigma_1 - \alpha} + a_2 \Gamma_2 t^{\sigma_2 - \alpha} \\[1\jot]
        a_3 \Gamma_3 t^{\sigma_3 - \alpha} + a_4 \Gamma_4 t^{\sigma_4 - \alpha} \\[1\jot]
        a_5 \Gamma_5 t^{\sigma_5 - \alpha} + a_6 \Gamma_6 t^{\sigma_6 - \alpha} \\
        \end{array}} \right)
     - \left(P + S\right)
     \left( {\begin{array}{@{}c@{}}
        a_1 t^{\sigma_1} + a_2 t^{\sigma_2} + 1 \\[1\jot]
        a_3 t^{\sigma_3} + a_4 t^{\sigma_4} + 1 \\[1\jot]
        a_5 t^{\sigma_5} + a_6 t^{\sigma_6} + 1 \\
     \end{array}} \right),
\end{equation*}
with $\Gamma_k = \frac{\Gamma(\sigma_k + 1)}{\Gamma(\sigma_k + 1 - \beta)}$, $1 \le k \le 6$. Therefore, the exact solution for the stiff FDE \eqref{eq:SFDE} is given by:
\begin{equation*}
    u(t) = \left( a_1 t^{\sigma_1} + a_2 t^{\sigma_2} + 1,~a_3 t^{\sigma_3} + a_4 t^{\sigma_4} + 1,~a_5 t^{\sigma_5} + a_6 t^{\sigma_6} + 1 \right)^T,
\end{equation*}
where, as in \cite{Cao2016}, we consider $\sigma_1 = \alpha$, $\sigma_2 = 2\alpha$, $\sigma_3 = 1+\alpha$, $\sigma_4 = 5 \alpha$, $\sigma_5 = 2$, $\sigma_6 = 2 + \alpha$, and $a_1 = 0.5$, $a_2 = 0.8$, and $a_3 = a_4 = a_5 = a_6 = 1$. For the numerical solution of \eqref{eq:SFDE}, we take $f(t,u) = S u(t) + g(t)$ and employ the IMEX($p$) scheme with $\alpha = 0.3$, utilizing a Picard iteration tolerance of $\epsilon^p = 5 \times 10^{-7}$. We remark that the coefficients of $P$ and $S$ are taken as small enough values in order to satisfy the Lipschitz condition for the Picard iteration scheme, and nevertheless, the choice of such values still makes \eqref{eq:SFDE} stiff. The obtained results are presented in Table \ref{tab:Conv_Stiff}, where we obtain first-order convergence for the IMEX(0) method without using correction terms. On the other hand, for IMEX(1), we obtain second-order convergence when using $M=3$ correction terms with correction powers $\sigma = \lbrace \alpha,\,2\alpha,\,1+\alpha \rbrace$ and $\delta = \lbrace 2\alpha,\,1+\alpha,\,5\alpha \rbrace$.

\begin{table}[!htbp]\label{tab:Conv_Stiff}
    \footnotesize
	\centering
	\caption{The global errors and convergence orders of the methods IMEX($p$) for \eqref{eq:SFDE} with $p = 0$ (upper table) and $p = 1$ (lower table) and $\alpha = 0.3$.}
	\begin{tabular}{@{}llllll@{}}
	    \multicolumn{6}{c}{\textbf{IMEX(0)}} \\
		\toprule
		{}&\multicolumn{2}{c}{$M=0$}&{}&\multicolumn{2}{c}{$M=1$} \\
		\cline{2-3}\cline{5-6}
		$h$  & $\mbox{err}(h)$ & $\mbox{Order}_2$ & {} & $\mbox{err}(h)$ & $\mbox{Order}_2$ \\ \midrule
		$2^{-3}$  & 1.9340e--02 & --      & {} & 1.9271e--02 &   --  \\
        $2^{-4}$  & 9.7036e--03 &  0.9950 & {} & 9.6793e--03 &  0.9935   \\
        $2^{-5}$  & 4.8614e--03 &  0.9972 & {} & 4.8518e--03 &  0.9964   \\
        $2^{-6}$  & 2.4334e--03 &  0.9984 & {} & 2.4294e--03 &  0.9979   \\
        $2^{-7}$  & 1.2175e--03 &  0.9991 & {} & 1.2157e--03 &  0.9988   \\
        \bottomrule
    \end{tabular}

    \vspace*{0.3cm}
    \setlength{\tabcolsep}{1.3mm}{
	\begin{tabular}{@{}llllllllllll@{}}
		\multicolumn{12}{c}{\textbf{IMEX(1)}} \\
		\toprule
		{}&\multicolumn{2}{c}{$M=0$}&{}&\multicolumn{2}{c}{$M=1$}&{}&\multicolumn{2}{c}{$M=2$}&{}&\multicolumn{2}{c}{$M=3$} \\
		\cline{2-3}\cline{5-6}\cline{8-9}\cline{11-12}
		$h$  & $\mbox{err}(h)$ & $\mbox{Order}_2$ & {} & $\mbox{err}(h)$ & $\mbox{Order}_2$ & {} & $\mbox{err}(h)$ & $\mbox{Order}_2$ & {} & $\mbox{err}(h)$ & $\mbox{Order}_2$ \\ \midrule
		$2^{-3}$  & 6.5717e--04 &   --   & {} & 4.3931e--04 & --      & {} & 2.4945e--04 & --     & {} & 2.1176e--04 & -- \\
        $2^{-4}$  & 3.8296e--04 & 0.7791 & {} & 1.5083e--04 & 1.5424  & {} & 6.2702e--05 & 1.9922 & {} & 5.6167e--05 & 1.9146 \\
        $2^{-5}$  & 2.3992e--04 & 0.6746 & {} & 5.5600e--05 & 1.4397  & {} & 1.5730e--05 & 1.9950 & {} & 1.4606e--05 & 1.9431 \\
        $2^{-6}$  & 1.5407e--04 & 0.6390 & {} & 2.1541e--05 & 1.3680  & {} & 4.0658e--06 & 1.9519 & {} & 3.7593e--06 & 1.9581 \\
        $2^{-7}$  & 9.9312e--05 & 0.6335 & {} & 8.5481e--06 & 1.3334  & {} & 1.7074e--06 & 1.2517 & {} & 9.6316e--07 & 1.9646 \\
        \bottomrule
    \end{tabular}}
\end{table}
\end{example}

\begin{example}\label{Sec:Ex3}
Nonlinear FDE (cf. \cite{Cao2016,Zeng2018}):
\begin{equation}\label{eq:NLFDE}
    {}^C_{0} D_t^\alpha u(t) = \lambda u(t) + f(t,u(t)),~~\alpha \in (0,1],~t \in \Omega;~~~u(0) = u_0.
\end{equation}
We consider the following cases for \eqref{eq:NLFDE}:
\begin{itemize}
    \item \textbf{Case I)} Let $\lambda = -0.2$, $f(t,u(t)) = 0$ and $u_0 = 1$. The corresponding analytical solution is given by $u(t) = E_{\alpha}(-0.2t^\alpha)$, where $E_{\alpha}(t)$ represents the Mittag-Leffler function (cf. \cite{Mainardi2010}).
    
    \item \textbf{Case II)} Let $\lambda = -1$, $f(t,u(t)) =  -0.1 u^2 + g(t)$, $u_0 = 1$ and choose $g(t)$ such that the exact solution of \eqref{eq:NLFDE} is given by $u(t) = 1 + t + t^2/2 + t^3/3 +4^4/4$.
    
    \item \textbf{Case III)} Let $\lambda = -1$, $f(t,u(t)) = 0.01 u\left(1 - u^2\right) + 2 \cos(2\pi t)$ and $u_0 = 1$.
\end{itemize}

We start with \textbf{Case I)}, for which we consider $\Omega = (0, 40]$ and a tolerance $\epsilon^p = 10^{-7}$ for the Picard iteration, with varying number of correction terms $M$ and $\alpha = 0.4$. The obtained results are presented in Table \ref{tab:Ex2_CaseI_FAMMI} for the methods IMEX($p$), where the CPU time (CPU) measured in seconds represent the running time of the methods. We observe the linear convergence rate when using $M = 2$ correction terms for IMEX(0). The convergence rates also improve for IMEX(1), however, we attain the accuracy limit of the scheme when using $M=4$. Such accuracy limit is determined by the value of $\epsilon = 5\times 10^{-9}$ utilized for the fast inversion approach discussed in Section \ref{Sec:FastInversion}.

\begin{table}[!htbp]\label{tab:Ex2_CaseI_FAMMI}
\footnotesize
\caption{The global errors and convergence orders of the methods IMEX($p$) for solving Case I) with $\alpha = 0.4$, varying $h$ and correction terms $M$ with corresponding powers $\sigma_k = \delta_k = k \alpha$.}
\centering
\begin{tabular}{lllllllll}
\toprule
\multirow{2}{*}{$h$} & \multirow{2}{*}{$M$}  & \multicolumn{3}{c}{IMEX(0)} & \multirow{2}{*}{$M$} & \multicolumn{3}{c}{IMEX(1)} \\ \cmidrule(lr){3-5} \cmidrule(l){7-9}   &        & $\mbox{err}(h)$ & $\mbox{Order}_2$      &  $\mbox{CPU}$ &                  & $\mbox{err}(h)$ & $\mbox{Order}_2$   & $\mbox{CPU}$    \\ \midrule
 $2^{-2}$ &  \multirow{5}{*}{0}  & 7.6111e--03 & -- & 0.21 & \multirow{5}{*}{0}  & 5.9001e--03 & -- & 0.37 \\
 $2^{-3}$ &  & 5.7608e--03 & 0.4019 & 0.37 & & 4.6835e--03 & 0.3352 & 0.68 \\
 $2^{-4}$ &  & 4.3450e--03 & 0.4069 & 0.75 & & 3.6810e--03 & 0.3475 & 1.37 \\
 $2^{-5}$ &  & 3.2724e--03 & 0.4090 & 1.35 & & 2.8698e--03 & 0.3591 & 2.65 \\  
 $2^{-6}$ &  & 2.4640e--03 & 0.4093 & 2.65 & & 2.2231e--03 & 0.3684 & 5.32 \\ \midrule
 $2^{-2}$ & \multirow{5}{*}{1} & 1.9406e--03 & --     & 0.22 & \multirow{5}{*}{2}  & 2.1673e--05   & -- & 1.06  \\      
 $2^{-3}$ &  & 1.2144e--03 & 0.6762 & 0.35 &  & 1.0252e--05 & 1.0800 &  2.13         \\
 $2^{-4}$ &  & 7.4541e--04 & 0.7042 & 0.73 &  & 4.7644e--06 & 1.1055 & 4.03           \\
 $2^{-5}$ &  & 4.5062e--04 & 0.7261 & 1.36 &  & 2.1831e--06 & 1.1259 & 7.95           \\ 
 $2^{-6}$ &  & 2.6918e--04 & 0.7433 & 2.66 &  & 9.8961e--07 & 1.1415 & 16.04           \\ \midrule
 $2^{-2}$ & \multirow{5}{*}{2} & 3.2061e--04 & -- & 0.56 & \multirow{5}{*}{4}  & 1.4155e--07 & --  & 1.74 \\
 $2^{-3}$ & & 1.7125e--04 & 0.9047 & 1.10 &   & 5.0125e--08 & 1.4978 & 3.32          \\
 $2^{-4}$ & & 9.0232e--05 & 0.9244 & 1.97 &   & 2.5470e--08 & 0.9767 & 6.49       \\
 $2^{-5}$ & & 4.7019e--05 & 0.9404 & 3.91 &   & 2.7468e--08 & --     & 13.07      \\
 $2^{-6}$ & & 2.4284e--05 & 0.9532 & 7.85 &   & 3.6517e--08 & --     & 26.02       \\ \bottomrule
\end{tabular}
\end{table}

For \textbf{Case II)}, we let $\Omega = (0, 1]$ and $\epsilon^p = 10^{-7}$. The obtained results are presented in Table \ref{tab:Conv_CaseII_IMEX-E}, where we observe that both schemes achieve the theoretical convergence rates for the global error.

\begin{table}[!htbp]\label{tab:Conv_CaseII_IMEX-E}
    \footnotesize
	\centering
	\caption{The global errors and convergence orders of the methods IMEX($p$) for solving Case II) with no correction term for $p = 0$ and $M=2$ correction terms for $p = 1$, with $\sigma = \delta = \lbrace 1-\alpha,\,1\rbrace$.}
	\begin{tabular}{@{}lllllllll@{}}
	    \multicolumn{9}{c}{\textbf{IMEX(0)}} \\
		\toprule
		{}&\multicolumn{2}{c}{$\alpha = 0.2$}&{}&\multicolumn{2}{c}{$\alpha = 0.5$}&{}&\multicolumn{2}{c}{$\alpha = 0.8$} \\
		\cline{2-3}\cline{5-6}\cline{8-9}
		$h$  & $\mbox{err}(h)$ & $\mbox{Order}_2$ & {} & $\mbox{err}(h)$ & $\mbox{Order}_2$& {} & $\mbox{err}(h)$ & $\mbox{Order}_2$ \\ \midrule
		$2^{-6}$  & 1.9855e--02 &   --    & {} & 1.8557e--02 &  --      & {} & 1.7421e--02 & -- \\
        $2^{-7}$  & 9.9111e--03 &  1.0024 & {} & 9.2800e--03 &  0.9998  & {} & 8.7136e--03 & 0.9994   \\
        $2^{-8}$  & 4.9517e--03 &  1.0011 & {} & 4.6425e--03 &  0.9992  & {} & 4.3602e--03 & 0.9989   \\
        $2^{-9}$  & 2.4750e--03 &  1.0005 & {} & 2.3226e--03 &  0.9991  & {} & 2.1821e--03 & 0.9987   \\
        $2^{-10}$ & 1.2373e--03 &  1.0002 & {} & 1.1619e--03 &  0.9992  & {} & 1.0921e--03 & 0.9986   \\
        \bottomrule
    \end{tabular}

    \vspace*{0.3cm}
    
	\begin{tabular}{@{}lllllllll@{}}
	    \multicolumn{9}{c}{\textbf{IMEX(1)}} \\
		\toprule
		{}&\multicolumn{2}{c}{$\alpha = 0.2$}&{}&\multicolumn{2}{c}{$\alpha = 0.5$}&{}&\multicolumn{2}{c}{$\alpha = 0.8$} \\
		\cline{2-3}\cline{5-6}\cline{8-9}
		$h$  & $\mbox{err}(h)$ & $\mbox{Order}_2$ & {} & $\mbox{err}(h)$ & $\mbox{Order}_2$& {} & $\mbox{err}(h)$ & $\mbox{Order}_2$ \\ \midrule
		$2^{-6}$  & 5.3875e--04 & --     & {} & 4.4167e--04  & --      & {} & 3.7370e--04 &      -- \\
        $2^{-7}$  & 1.3610e--04 & 1.9849 & {} & 1.1153e--04  & 1.9856  & {} & 9.4359e--05 &  1.9856   \\
        $2^{-8}$  & 3.4206e--05 & 1.9924 & {} & 2.8033e--05  & 1.9922  & {} & 2.3740e--05 &  1.9909     \\
        $2^{-9}$  & 8.5751e--06 & 1.9960 & {} & 7.0286e--06  & 1.9958  & {} & 5.9679e--06 &  1.9920     \\
        $2^{-10}$ & 2.1498e--06 & 1.9960 & {} & 1.7586e--06  & 1.9988  & {} & 1.4862e--06 &  2.0056     \\
        \bottomrule
    \end{tabular}
\end{table}

For \textbf{Case III)}, we consider $\Omega = (0, 50]$, and we set a numerical tolerance $\epsilon^p = 10^{-6}$ for the Picard iteration. Figures \ref{fig:Zeng_4_1III} and \ref{fig:Zeng_4_1III1} illustrate the obtained highly oscillatory solutions. We also perform a convergence analysis utilizing a benchmark solution with $h = 2^{-11}$ and $M = 3$ correction terms and evaluate the global error. The obtained results are presented in Table \ref{tab:Conv_CaseIII_IMEX-E}, where the expected first- and second-order convergence rates are obtained, respectively, with $p=0$ without correction terms, and $p=1$ using $M=3$ correction terms. The computational times for the developed IMEX schemes are illustrated in Figure \ref{fig:CompTime_Ex2_CaseIII} including the initial phase for computation of correction weights. We observe the computational complexity of $\mathcal{O}(N \log N)$ for the developed schemes even for nonlinear problems, with a small difference between the first- and second-order schemes.

\begin{figure}[!htbp]
\subfigure[$\alpha = 0.2$]{ \centering
\includegraphics[width=0.48\textwidth,height=0.38\textwidth]{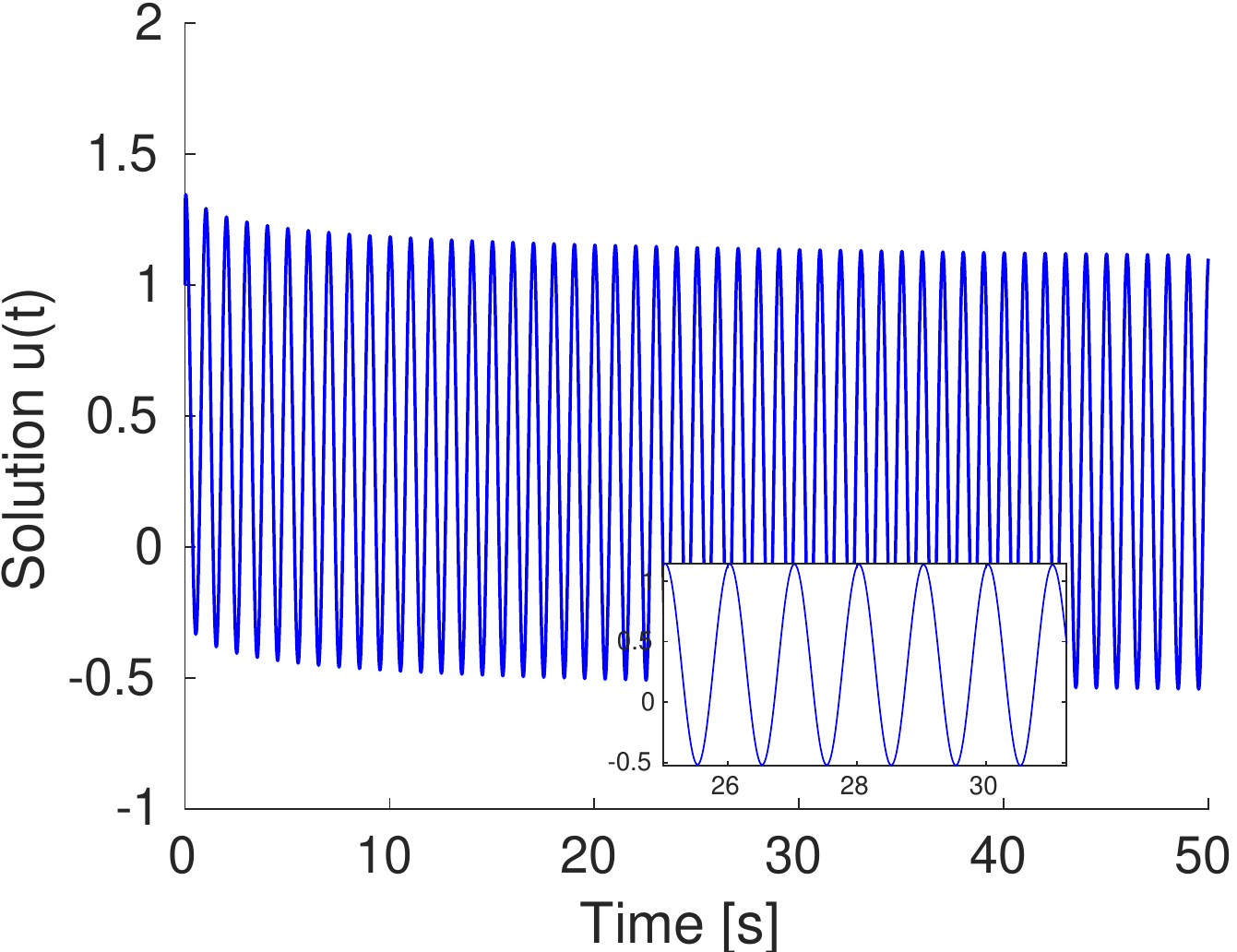}}
\subfigure[$\alpha = 0.5$]{ \centering
\includegraphics[width=0.48\textwidth,height=0.38\textwidth]{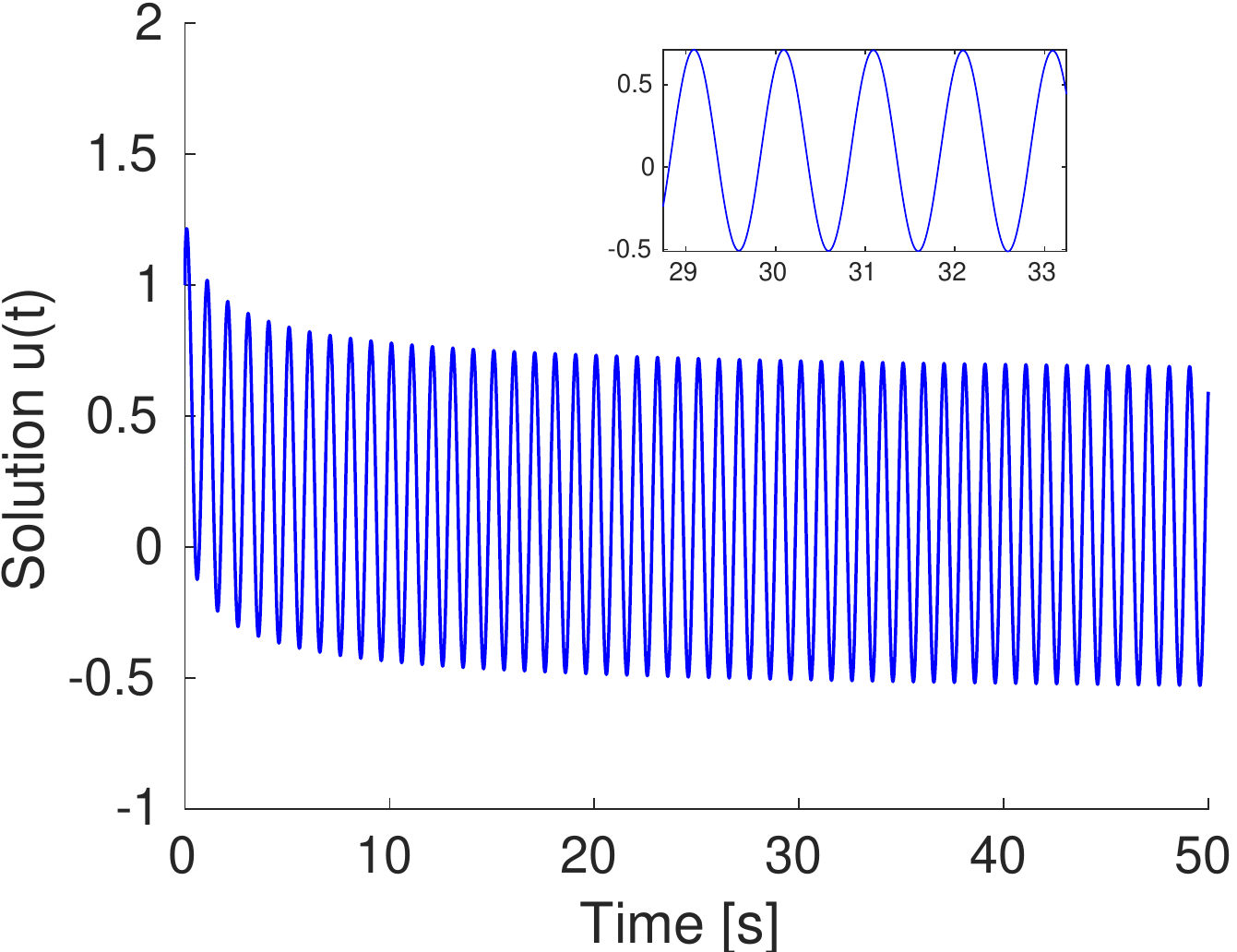}}
\caption{Solution \textit{versus} time $t$ for Case III) using $h = 0.005$ and $M=1$ correction term. \label{fig:Zeng_4_1III}}
\end{figure}

\begin{figure}[!htbp]
\subfigure[$\alpha = 0.7$]{\centering
\includegraphics[width=0.48\textwidth,height=0.38\textwidth]{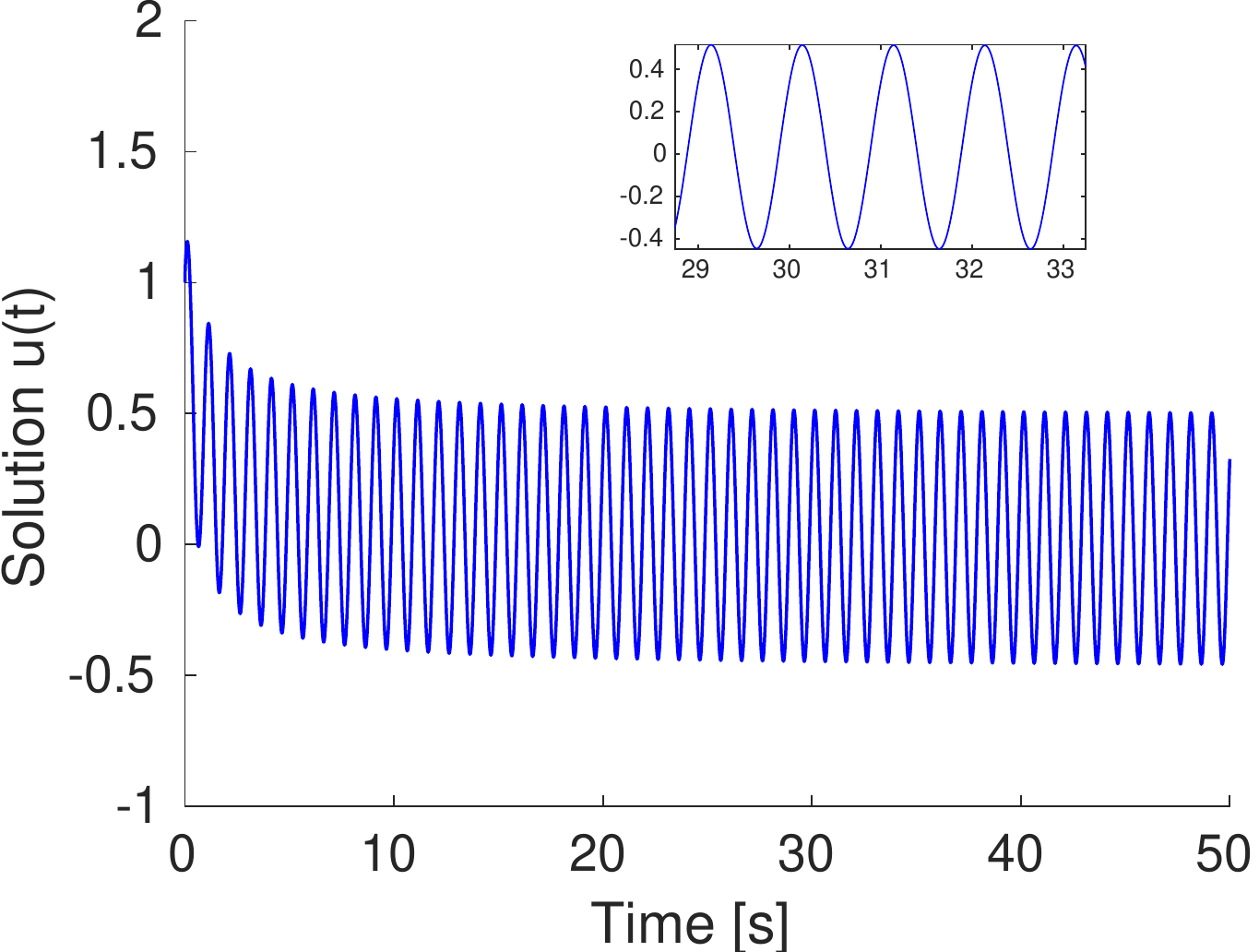}\label{fig:Zeng_4_1III1}}
\subfigure[]{\centering
\includegraphics[width=0.48\textwidth,height=0.38\textwidth]{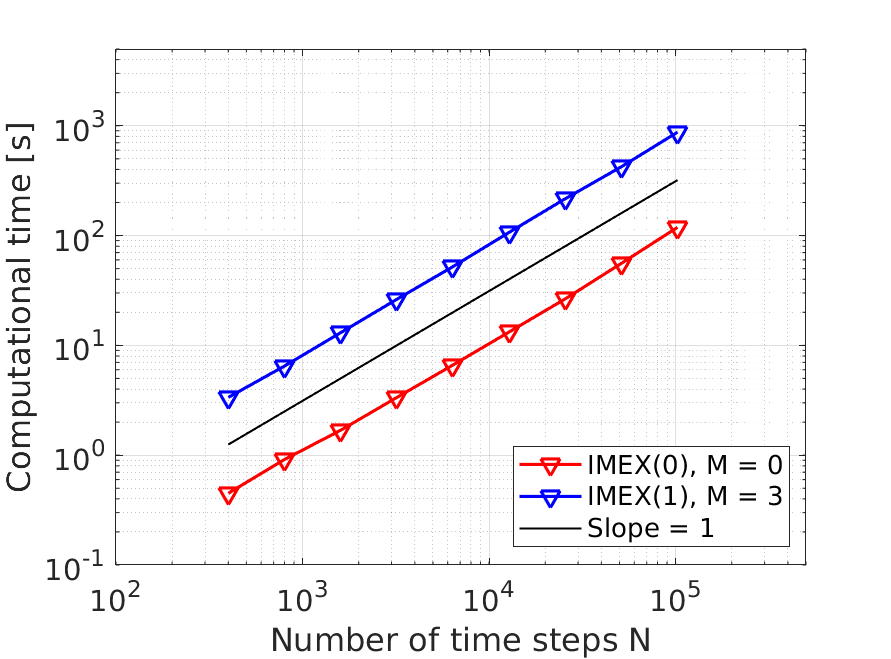}\label{fig:CompTime_Ex2_CaseIII}}
\caption{\textit{(a)} Solution \textit{versus} time $t$ for Case III) using $h = 0.005$ with $M = 1$ correction term. \textit{(b)} Computational time \textit{versus} number of time steps $N$ for the developed IMEX schemes. }
\end{figure}

\begin{table}[!htbp]
    \footnotesize
	\centering
	\caption{Convergence results for the IMEX(p) scheme solving Case III) without corrections for $p = 0$ and $M=3$ correction terms for $p = 1$, with $\sigma = \delta =  \lbrace \alpha,\,2\alpha,\,1+\alpha \rbrace$. \label{tab:Conv_CaseIII_IMEX-E}}
	\begin{tabular}{@{}lllllllll@{}}
	    \multicolumn{9}{c}{\textbf{IMEX(0)}} \\
		\toprule
		{}&\multicolumn{2}{c}{$\alpha = 0.2$}&{}&\multicolumn{2}{c}{$\alpha = 0.5$}&{}&\multicolumn{2}{c}{$\alpha = 0.7$} \\
		\cline{2-3}\cline{5-6}\cline{8-9}
		$h$  & $\mbox{err}(h)$ & $\mbox{Order}_2$ & {} & $\mbox{err}(h)$ & $\mbox{Order}_2$& {} & $\mbox{err}(h)$ & $\mbox{Order}_2$ \\ \midrule
		$2^{-3}$ & 4.4883e--01 &   --    & {} & 3.3552e--01 & --     & {} & 2.5798e--01 &      -- \\
        $2^{-4}$ & 2.2466e--01 &  0.9984 & {} & 1.6681e--01 & 1.0082 & {} & 1.3311e--01 &  0.9546  \\
        $2^{-5}$ & 1.1217e--01 &  1.0021 & {} & 8.3553e--02 & 0.9975 & {} & 6.7629e--02 &  0.9769  \\
        $2^{-6}$ & 5.6354e--02 &  0.9931 & {} & 4.2141e--02 & 0.9875 & {} & 3.4015e--02 &  0.9915  \\
        $2^{-7}$ & 2.8246e--02 &  0.9965 & {} & 2.1163e--02 & 0.9937 & {} & 1.7091e--02 &  0.9929  \\
        \bottomrule
    \end{tabular}

    \vspace*{0.3cm}
    
	\begin{tabular}{@{}lllllllll@{}}
	   \multicolumn{9}{c}{\textbf{IMEX(1)}} \\
		\toprule
		{}&\multicolumn{2}{c}{$\alpha = 0.2$}&{}&\multicolumn{2}{c}{$\alpha = 0.5$}&{}&\multicolumn{2}{c}{$\alpha = 0.7$} \\
		\cline{2-3}\cline{5-6}\cline{8-9}
		$h$  & $\mbox{err}(h)$ & $\mbox{Order}_2$ & {} & $\mbox{err}(h)$ & $\mbox{Order}_2$& {} & $\mbox{err}(h)$ & $\mbox{Order}_2$ \\ \midrule
		$2^{-3}$ & 3.3534e-01 & --      & {} & 1.9664e-01 & --      & {} & 2.5260e-01 &      -- \\
        $2^{-4}$ & 9.5452e-02 &  1.8128 & {} & 8.1541e-02 &  1.2699 & {} & 7.1427e-02 &  1.8223 \\
        $2^{-5}$ & 2.2387e-02 &  2.0921 & {} & 2.0062e-02 &  2.0230 & {} & 2.0503e-02 &  1.8007 \\
        $2^{-6}$ & 5.4892e-03 &  2.0280 & {} & 4.5814e-03 &  2.1306 & {} & 4.8812e-03 &  2.0705 \\
        $2^{-7}$ & 1.8634e-03 &  1.5586 & {} & 1.0402e-03 &  2.1389 & {} & 1.1154e-03 &  2.1296 \\
        \bottomrule
    \end{tabular}
\end{table}
\end{example}

\section{Conclusions}
\label{section6}

We developed two new first- and second-order IMEX schemes for accurate and efficient solution of stiff/nonlinear FDEs with singularities. Both of the schemes are based on the linear multi-step FAMM developed by Zayernouri and Matzavinos \cite{Zayernouri2016MS}, followed by an extrapolation formula from which we obtain the \textit{so-called} IMEX($p$) scheme. In order to handle the inherent singularities of the FDEs, we introduced 4 sets of correction terms for the IMEX($p$) schemes. The convergence and linear stability of the developed schemes is also analyzed. A fast solution for the developed schemes is attained by employing a fast-inversion approach developed by Lu \textit{et al.} \cite{Lux2015} on the resulting nonlinear Toeplitz system, leading to a computational complexity of $\mathcal{O}(N \log N)$. Based on our computational results, we observed that:
\begin{itemize}
    \item When considering a linear problem, the fast implementation of the scheme was significantly faster than the original FAMM by Zayernouri and Matzavinos \cite{Zayernouri2016MS}, without even the presence of a \textit{break-even} point.
    
    \item Both IMEX($p$) schemes achieved global first- (for $p=0$) and second-order (for $p=1$) convergence rates for stiff/nonlinear, highly-oscillatory and singular solutions, given the choice of appropriate sets of correction terms.
    
    \item The computational performance was slightly better for the IMEX(0) scheme. We also remark that such scheme is simpler to implement and generally requires a smaller number of correction terms due to lower regularity requirements to attain first-order accuracy.
\end{itemize}

The main advantages of the developed IMEX schemes in comparison to other works are: larger stability regions when compared to the IMEX schemes developed by Cao \textit{et al.} \cite{Cao2016}; and also a fast solution alternative when compared to the original fractional Adams-Bashforth/Moulton methods developed by Zayernouri and Matzavinos \cite{Zayernouri2016MS}, and the IMEX schemes by Cao \textit{et al.} \cite{Cao2016}. When compared to the matrix-based fast solver for FDEs developed by Lu \cite{Lux2015}, the developed framework in this work handles the numerical solution of nonlinear and singular FDEs instead of only linear ones.


The developed schemes could be used for, \textit{e.g.}, fractional visco-elastic models under complex loading conditions and long-time integration \cite{Jaishankar2013}. Regarding additional constitutive effects, such as fractional visco-elasto-plastic models \cite{Suzuki2016,Suzuki2017Thesis}, and plasticity-driven damage formulations \cite{Suzuki2014}, the developed methods could be potentially applied under simple monotone loads. Furthermore, the introduction of additional sets of correction terms motivates the use of data-infused self-singularity-capturing approaches \cite{Suzuki2018SC}, which would decrease the number of correction terms per set.

\appendix
\section{Discretization Coefficients for the History Load Term}
We present the coefficients $\gamma_{k,j}^{(p)}~(0 \le j \le k,~0 \le k \le N,~p=0,1)$ defined in \eqref{2.15*}:
\label{appendixA}
\[\gamma_{0,0}^{(0)} = 0,~~~\gamma_{0,0}^{(1)} = 0,~~~\gamma_{1,0}^{(1)} = \frac{\mathcal{A}_{1,1} - \mathcal{A}_{1,0}}{h},\]
\[\gamma_{1,1}^{(1)} = \frac{\mathcal{A}_{1,0} - \mathcal{A}_{1,1}}{h},~~~\gamma_{2,0}^{(1)} = \frac{-2\mathcal{A}_{2,0} + \mathcal{A}_{2,1} - \mathcal{A}_{2,2}}{2h} + \frac{\mathcal{B}_{2,1} - \mathcal{B}_{2,2}}{(2-\alpha)h^2},\]
\[\gamma_{2,1}^{(1)} = \frac{\mathcal{A}_{2,0} - \mathcal{A}_{2,1} + 2\mathcal{A}_{2,2}}{h} - \frac{2\mathcal{B}_{2,1} - 2\mathcal{B}_{2,2}}{(2-\alpha)h^2},~~~\gamma_{2,2}^{(1)} = \frac{ \mathcal{A}_{2,1} - 3\mathcal{A}_{2,2}}{2h} + \frac{\mathcal{B}_{2,1} - \mathcal{B}_{2,2}}{(2-\alpha)h^2},\]
for $k \ge 1$,
\begin{align*}
\gamma_{k,j}^{(0)} = \left\{ {\begin{array}{*{20}{l}}
\dfrac{\mathcal{A}_{k,1} - \mathcal{A}_{k,0}}{h},&j=0,\\[2\jot]
\dfrac{\mathcal{A}_{k,j-1} - 2\mathcal{A}_{k,j} + \mathcal{A}_{k,j+1}}{h},~~~& 1\le j \le k-1,\\[2\jot]
\dfrac{\mathcal{A}_{k,k-1} - \mathcal{A}_{k,k}}{h},&j=k,
\end{array}} \right.
\end{align*}
and, for $k \ge 3$,
\begin{align*}
\gamma_{k,j}^{(1)} = \left\{ {\begin{array}{*{20}{l}}
\dfrac{-2\mathcal{A}_{k,0} + \mathcal{A}_{k,1} - \mathcal{A}_{k,2}}{2h} + \dfrac{\mathcal{B}_{k,1} - \mathcal{B}_{k,2}}{(2-\alpha)h^2},&j=0,\\[3\jot]
\dfrac{2\mathcal{A}_{k,0} \!-\! 2\mathcal{A}_{k,1} \!+\! 3\mathcal{A}_{k,2} \!-\! \mathcal{A}_{k,3}}{2h} \!+\! \dfrac{-2\mathcal{B}_{k,1} \!+\! 3\mathcal{B}_{k,2} \!-\! \mathcal{B}_{k,3}}{(2-\alpha)h^2},&j=1,\\[3\jot]
\dfrac{\mathcal{A}_{k,j-1} - 3\mathcal{A}_{k,j} + 3\mathcal{A}_{k,j+1} - \mathcal{A}_{k,j+2}}{2h} \\[3\jot]
~~~+ \dfrac{\mathcal{B}_{k,j-1} - 3\mathcal{B}_{k,j} + 3\mathcal{B}_{k,j+1} - \mathcal{B}_{k,j+2}}{(2-\alpha)h^2},& 2\le j \le k-2,\\[3\jot]
\dfrac{\mathcal{A}_{k,k-2} \!-\! 3\mathcal{A}_{k,k-1} \!+\! 4\mathcal{A}_{k,k}}{2h} \!+\! \dfrac{\mathcal{B}_{k,k-2} \!-\! 3\mathcal{B}_{k,k-1} \!+\! 2\mathcal{B}_{k,k}}{(2-\alpha)h^2},~~~&j=k-1,\\[3\jot]
\dfrac{\mathcal{A}_{k,k-1} - 3\mathcal{A}_{k,k}}{2h} + \dfrac{\mathcal{B}_{k,k-1} - \mathcal{B}_{k,k}}{(2-\alpha)h^2},&j=k.
\end{array}} \right.
\end{align*}

\section{Proofs}
\label{appendix}

\subsection{Proof of Lemma \ref{lemma2.3}}
Before proof of Lemma \ref{lemma2.3}, we need some preparatory results by introducing the notations:
\begin{align}\label{A.1}
a_l^{(\alpha)} = (l+\theta+1)^{1-\alpha} - (l+\theta)^{1-\alpha},~~~l \ge 0,
\end{align}
\begin{align}\label{A.2}
b_l^{(\alpha)} = \frac{(l+\theta+1)^{2-\alpha} - (l+\theta)^{2-\alpha}}{2-\alpha} - \frac{(l+\theta+1)^{1-\alpha} + (l+\theta)^{1-\alpha}}{2},~~~l \ge 0,
\end{align}
and
\begin{align}\label{A.3}
c_l^{(\alpha)} = \left\{ {\begin{array}{*{20}{l}}
a_0^{(\alpha)} + b_0^{(\alpha)},&l=0,\\[3\jot]
a_l^{(\alpha)} + b_l^{(\alpha)} - b_{l-1}^{(\alpha)},~~~& 1\le l \le k-2,\\[3\jot]
a_l^{(\alpha)} - b_{l-1}^{(\alpha)},&l=k-1.
\end{array}} \right.
\end{align}
where $\theta \in [0,1]$. The following lemma states the properties of the above defined notations.

\begin{lemma}\label{lemmaA.1}
For any $\alpha~(0 < \alpha \le 1)$ and $\big\{ a_l^{(\alpha)} \big\}$, $\big\{ b_l^{(\alpha)} \big\}$ and $\big\{ c_l^{(\alpha)} \big\}$ defined in \eqref{A.1}-\eqref{A.3}, respectively, it holds that
\begin{itemize}
\item $a_0^{(\alpha)} > a_1^{(\alpha)} > a_2^{(\alpha)} > \cdots > a_l^{(\alpha)} > 0~\mbox{as}~l \rightarrow \infty,~a_0^{(\alpha)} \le v_0$;
\vskip 0.1 cm
\item $b_0^{(\alpha)} > b_1^{(\alpha)} > b_2^{(\alpha)} > \cdots > b_l^{(\alpha)} > 0~\mbox{as}~l \rightarrow \infty$;
\vskip 0.1 cm
\item $c_2^{(\alpha)} > c_3^{(\alpha)} > c_4^{(\alpha)} > \cdots > c_{k-1}^{(\alpha)} > 0,~\left| c_l^{(\alpha)} \right| \le v_0~(l=0~\mbox{or}~1),~c_2^{(\alpha)} \le v_0$,
\end{itemize}
where $v_0 > 0$ is a constant.
\end{lemma}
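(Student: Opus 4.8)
The plan is to reduce every claim to elementary monotonicity properties of $f(s)=s^{1-\alpha}$ and its derivatives, exploiting that the three sequences are nothing but first differences, trapezoidal quadrature errors, and a second-difference-type combination of $f$ evaluated at the shifted nodes $x=l+\theta$. Throughout I write $f(s)=s^{1-\alpha}$, so $f'(s)=(1-\alpha)s^{-\alpha}>0$ is strictly decreasing and $f''(s)=-\alpha(1-\alpha)s^{-1-\alpha}<0$ for $\alpha\in(0,1)$, and I let $F(s)=s^{2-\alpha}/(2-\alpha)$ be the antiderivative $F'=f$. With $x=l+\theta$ one has $a_l^{(\alpha)}=f(x+1)-f(x)$ and, recognizing the bracketed term as a trapezoidal quadrature error, $b_l^{(\alpha)}=\int_x^{x+1}f(s)\,ds-\tfrac12\big(f(x)+f(x+1)\big)$.

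First I would dispatch the bullet on $\{a_l^{(\alpha)}\}$. Writing $a_l^{(\alpha)}=\int_x^{x+1}f'(s)\,ds$ with $f'>0$ gives positivity; since $f'$ is strictly decreasing, shifting the unit interval to the right as $l$ increases yields $a_{l+1}^{(\alpha)}<a_l^{(\alpha)}$, while $a_l^{(\alpha)}\le f'(l+\theta)=(1-\alpha)(l+\theta)^{-\alpha}\to0$. The bound $a_0^{(\alpha)}\le(\theta+1)^{1-\alpha}\le2^{1-\alpha}\le v_0$ is immediate. For $\{b_l^{(\alpha)}\}$, positivity follows from the concavity of $f$ (the trapezoid lies below the graph), quantitatively $b_l^{(\alpha)}=\tfrac{\alpha(1-\alpha)}{12}\,\xi_l^{-1-\alpha}$ for some $\xi_l\in(x,x+1)$ by the Peano form of the trapezoidal error; this also forces $b_l^{(\alpha)}\to0$. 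For monotonicity I would view $b_l^{(\alpha)}=\phi(x)$ and differentiate: $\phi'(x)=(1-\alpha)\big[\int_x^{x+1}s^{-\alpha}\,ds-\tfrac12((x+1)^{-\alpha}+x^{-\alpha})\big]$, which is $(1-\alpha)$ times a trapezoidal error for the \emph{convex} integrand $s^{-\alpha}$ and is therefore negative, so $\phi$ decreases and $b_{l+1}^{(\alpha)}<b_l^{(\alpha)}$.

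The crux is the bullet on $\{c_l^{(\alpha)}\}$, where the positive term $a_l^{(\alpha)}$ competes with the negative increment $b_l^{(\alpha)}-b_{l-1}^{(\alpha)}$. For the interior range $2\le l\le k-2$ I would collapse the combination into a single integral. Using $F'=f$ one checks $c_l^{(\alpha)}=a_l^{(\alpha)}+b_l^{(\alpha)}-b_{l-1}^{(\alpha)}=\tfrac12\,\Delta^2 f+\Delta^2 F$, where $\Delta^2 g:=g(x+1)-2g(x)+g(x-1)$, and then applies the Peano-kernel identity $\Delta^2 g=\int_{-1}^{1}(1-|t|)\,g''(x+t)\,dt$ to obtain
\begin{align*}
c_l^{(\alpha)}=\int_{-1}^{1}(1-|t|)\,G(x+t)\,dt,\qquad G(s):=f'(s)+\tfrac12 f''(s)=(1-\alpha)s^{-\alpha}\Big(1-\tfrac{\alpha}{2s}\Big).
\end{align*}
Since $l\ge2$ forces $x+t\ge x-1\ge1$, one has $G(s)>0$ for $s\ge1$ (as $\tfrac{\alpha}{2s}\le\tfrac12<1$), giving $c_l^{(\alpha)}>0$; and $G'(s)=\alpha(1-\alpha)s^{-\alpha-1}\big(\tfrac{\alpha+1}{2s}-1\big)<0$ on $[1,\infty)$, so $G$ is strictly decreasing and $c_l^{(\alpha)}>c_{l+1}^{(\alpha)}$ throughout the interior range. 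For the terminal index I would treat $c_{k-1}^{(\alpha)}=a_{k-1}^{(\alpha)}-b_{k-2}^{(\alpha)}$ directly: the mean-value lower bound $a_{k-1}^{(\alpha)}\ge(1-\alpha)(x+1)^{-\alpha}$ against the upper bound $b_{k-2}^{(\alpha)}\le\tfrac{\alpha(1-\alpha)}{12}(x-1)^{-1-\alpha}$ gives, for $x=k-1+\theta\ge2$, a ratio bounded below by $\tfrac{4}{\alpha}>1$, hence $c_{k-1}^{(\alpha)}>0$; and writing $c_{k-1}^{(\alpha)}=\big(a_{k-1}^{(\alpha)}+b_{k-1}^{(\alpha)}-b_{k-2}^{(\alpha)}\big)-b_{k-1}^{(\alpha)}$ and comparing the parenthesized interior-form value with $c_{k-2}^{(\alpha)}$ via the same monotonicity of $G$ yields $c_{k-2}^{(\alpha)}>c_{k-1}^{(\alpha)}$. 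The bounds $|c_0^{(\alpha)}|,|c_1^{(\alpha)}|,c_2^{(\alpha)}\le v_0$ follow at once from the boundedness of $a_0^{(\alpha)},a_1^{(\alpha)},a_2^{(\alpha)},b_0^{(\alpha)},b_1^{(\alpha)},b_2^{(\alpha)}$ already established.

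The main obstacle is precisely the interior estimate for $c_l^{(\alpha)}$: neither positivity nor monotonicity is visible from the defining three-term combination, and a naive termwise comparison fails because $2b_l^{(\alpha)}-b_{l-1}^{(\alpha)}-b_{l+1}^{(\alpha)}$ has the ``wrong'' sign (the sequence $\{b_l^{(\alpha)}\}$ is convex). The resolution is the Peano-kernel reduction to the single scalar function $G$, after which both properties reduce to the signs of $G$ and $G'$ on $[1,\infty)$; the only subtlety to watch is that all these sign statements require the argument to be at least $1$, which is exactly why the claims are stated from $l=2$ onward and why the terminal index must be handled by hand.
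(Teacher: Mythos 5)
Your proposal is correct, and it takes a genuinely different route from the paper's. For $\{a_l^{(\alpha)}\}$ the two arguments coincide (both write $a_l^{(\alpha)}$ as an integral of the decreasing function $f'(s)=(1-\alpha)s^{-\alpha}$ over a shifted unit interval). But for the two hard claims --- positivity and monotonicity of $\{b_l^{(\alpha)}\}$ and of the interior $\{c_l^{(\alpha)}\}$ --- the paper gives no self-contained argument at all: it states that the proofs are ``similar to that for Lemma 2.1 and Lemma 2.2 in \cite{Gao2014}'' and only works out the boundedness of $|c_0^{(\alpha)}|$, $|c_1^{(\alpha)}|$, $c_2^{(\alpha)}$ by mean-value-theorem manipulations. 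You instead prove everything in place: you recognize $b_l^{(\alpha)}$ as the trapezoidal quadrature error of the concave $f(s)=s^{1-\alpha}$ (positivity), get its monotonicity by differentiating in the shift variable and reading the result as the trapezoidal error of the convex $f'$, and --- the key move --- collapse $c_l^{(\alpha)}=a_l^{(\alpha)}+b_l^{(\alpha)}-b_{l-1}^{(\alpha)}$ into $\tfrac12\Delta^2 f+\Delta^2 F$ and then, via the Peano-kernel identity $\Delta^2 g=\int_{-1}^{1}(1-|t|)\,g''(x+t)\,dt$, into a single integral of $G=f'+\tfrac12 f''=(1-\alpha)s^{-\alpha}\bigl(1-\tfrac{\alpha}{2s}\bigr)$ against a nonnegative kernel, so that positivity and monotonicity of the interior terms reduce to the signs of $G$ and $G'$ on $[1,\infty)$. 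You also handle the terminal index $c_{k-1}^{(\alpha)}=a_{k-1}^{(\alpha)}-b_{k-2}^{(\alpha)}$ explicitly (the ratio bound $4/\alpha>1$), a case the paper leaves buried in the citation. What each buys: the paper's route is short because it leans on known results; yours is self-contained, and the kernel representation makes transparent exactly why the claims start at $l=2$. Two cosmetic caveats, shared with the paper's own statement: at $\alpha=1$ all three sequences vanish identically, so the strict inequalities must be read for $\alpha\in(0,1)$; and the mean-value form $b_l^{(\alpha)}=\tfrac{\alpha(1-\alpha)}{12}\xi_l^{-1-\alpha}$ is not literally available for $l=0$, $\theta=0$ (where $f''$ fails to be integrable at the endpoint) --- harmless, since there you only need the concavity argument for positivity and the crude bound $b_0^{(\alpha)}\le\int_{\theta}^{\theta+1}f(s)\,ds\le 2$ for boundedness, and the decay estimate is only invoked for $l\ge 1$.
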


\begin{proof}
From the definition of $a_l^{(\alpha)}$,  we can verify that $a_0^{(\alpha)} = (\theta+1)^{1-\alpha} - \theta^{1-\alpha}$ can be bounded by a constant $v_1 > 0$ and
\begin{align*}
a_l^{(\alpha)} = (1-\alpha) \int_{l}^{l+1} (x+\theta)^{-\alpha} dx,~~~l = 0,1,2,\ldots,
\end{align*}
where $(x+\theta)^{-\alpha} > 0$ is a monotone decreasing function. Then it is no difficult to verify that
$$a_0^{(\alpha)} > a_1^{(\alpha)} > a_2^{(\alpha)} > \cdots > a_l^{(\alpha)} > 0.$$
For the second conclusion and the first part of the third conclusion, the proofs are similar to that for Lemma 2.1 and Lemma 2.2 in \cite{Gao2014}. Besides, in view of the definition \eqref{A.3} of $c_l^{(\alpha)}$, we have that
\begin{align*}
& \left| c_0^{(\alpha)} \right| = \left| \frac{(\theta+1)^{1-\alpha} - 3\theta^{1-\alpha}}{2} + \frac{(\theta+1)^{2-\alpha} - \theta^{2-\alpha}}{2-\alpha} \right| \nonumber\\
= & \left| \frac{(\theta+1)^{1-\alpha} - 3\theta^{1-\alpha}}{2} + \frac{(2-\alpha)(\theta+\xi)^{1-\alpha}}{2-\alpha} \right| \le \frac{3\left[ (\theta+1)^{1-\alpha} + \theta^{1-\alpha}\right]}{2},
\end{align*}
where $\xi \in (0,1)$ and the mean value theorem has been used. It means $\left| c_0^{(\alpha)} \right|$ can be bounded by a constant $v_2 > 0$. Similarly, we can get there exists a constant $v_3 > 0$, such that
\begin{align*}
\left| c_1^{(\alpha)} \right| \le \frac{3(\theta+2)^{1-\alpha} + 4(\theta+1)^{1-\alpha} + \theta^{1-\alpha}}{2} \le v_3.
\end{align*}
For $c_2^{(\alpha)}$, it follows
\begin{align*}
&c_2^{(\alpha)} = \frac{(\theta+3)^{1-\alpha} \!-\! 2(\theta+2)^{1-\alpha} \!+\! (\theta+1)^{1-\alpha}}{2} \!+\! \frac{(\theta+3)^{2-\alpha} \!-\! 2(\theta\!+\!2)^{2-\alpha} \!+\! (\theta\!+\!1)^{2-\alpha}}{2-\alpha} \nonumber\\
= & \frac{(\theta+3)^{1-\alpha} - 2(\theta+2)^{1-\alpha} + (\theta+1)^{1-\alpha}}{2} + \frac{(2-\alpha)(\theta+\eta_2)^{1-\alpha} - (2-\alpha)(\theta+\eta_1)^{1-\alpha}}{2-\alpha} \nonumber\\
\le& \frac{3(\theta+3)^{1-\alpha} - 2(\theta+2)^{1-\alpha} - (\theta+1)^{1-\alpha}}{2} \le v_4,
\end{align*}
where $\eta_1 \in (1,2),\eta_2 \in (2,3)$ and $v_4 > 0$ is a constant. Hence, by setting $v_0 = \max\limits_{1 \le l \le 4} v_l$ and summarizing the above results, all this completes the proof.
~~~\end{proof}

Next, we present the proof for Lemma \ref{lemma2.3}.
\begin{proof}
Firstly, we consider the situation of $p=0$. From \eqref{2.15*}, when $j=0$, we have that
\begin{align*}
&\gamma_{k,0}^{(0)} = \frac{\mathcal{A}_{k,1} - \mathcal{A}_{k,0}}{h} = \frac{1}{h} \int_{t_k}^{t_{k+1}} (t_{k+1}-v)^{\alpha-1} \left[(v-t_1)^{1-\alpha} - (v-t_0)^{1-\alpha}\right] dv \nonumber\\
= & \frac{(t_{k+\tilde{\theta}}-t_1)^{1-\alpha} - (t_{k+\tilde{\theta}}-t_0)^{1-\alpha}}{h} \int_{t_k}^{t_{k+1}} (t_{k+1}-v)^{\alpha-1} dv = -\frac{a_{k-1}^{(\alpha)}}{\alpha} < 0,
\end{align*}
where $t_{k+\tilde{\theta}} = (k+\tilde{\theta}) h,~\tilde{\theta} \in [0,1]$ and the first mean value theorem for integrals has been used. For $j=1,2,\ldots,k-1$, one has
\begin{align*}
& \gamma_{k,j}^{(0)} =\frac{\mathcal{A}_{k,j-1} - 2\mathcal{A}_{k,j} + \mathcal{A}_{k,j+1}}{h} \nonumber\\
= & \frac{1}{h} \int_{t_k}^{t_{k+1}} (t_{k+1}-v)^{\alpha-1} \left[(v-t_{j-1})^{1-\alpha} - 2(v-t_j)^{1-\alpha} + (v-t_{j+1})^{1-\alpha}\right] dv \nonumber\\
= & \frac{(t_{k+\tilde{\theta}}-t_{j-1})^{1-\alpha} - 2(t_{k+\tilde{\theta}}-t_j)^{1-\alpha} + (t_{k+\tilde{\theta}}-t_{j+1})^{1-\alpha}}{h}\int_{t_k}^{t_{k+1}} (t_{k+1}-v)^{\alpha-1}dv \nonumber\\
= & \frac{a_{k-j}^{(\alpha)} - a_{k-j-1}^{(\alpha)}}{\alpha} < 0.
\end{align*}
For $j=k$, it holds that
\begin{align*}
& \gamma_{k,k}^{(0)} = \frac{\mathcal{A}_{k,k-1} - \mathcal{A}_{k,k}}{h} = \frac{1}{h} \int_{t_k}^{t_{k+1}} (t_{k+1}-v)^{\alpha-1} \left[(v-t_{k-1})^{1-\alpha} - (v-t_{k})^{1-\alpha}\right] dv \nonumber\\
= & \frac{(t_{k+\tilde{\theta}}-t_{k-1})^{1-\alpha} - (t_{k+\tilde{\theta}}-t_{k})^{1-\alpha}}{h} \int_{t_k}^{t_{k+1}} (t_{k+1}-v)^{\alpha-1} dv = \frac{a_0^{(\alpha)}}{\alpha} > 0,
\end{align*}
and $\gamma_{k,k}^{(0)} = \dfrac{a_0^{(\alpha)}}{\alpha} \le \dfrac{v_0}{\alpha}$, where Lemma \ref{lemmaA.1} has been used.
Similarly, for $\gamma_{k,j}^{(1)}$, by using the first mean value theorem for integrals, we obtain
\begin{align*}
\gamma_{k,j}^{(1)} = \left\{ {\begin{array}{*{20}{l}}
-\dfrac{c_{k-1}^{(\alpha)}}{\alpha},&j=0,\\[3\jot]
\dfrac{-c_{k-j-1}^{(\alpha)} + c_{k-j}^{(\alpha)}}{\alpha},~~~& 1\le j \le k-1,\\[3\jot]
\dfrac{c_{0}^{(\alpha)}}{\alpha},&j=k.
\end{array}} \right.
\end{align*}
This, together with Lemma \ref{lemmaA.1}, imply that $\gamma_{k,j}^{(1)} < 0~(0 \le j \le k-3)$ and
\[\left| \gamma_{k,k-2}^{(1)} \right| = \left| \frac{-c_{1}^{(\alpha)} + c_{2}^{(\alpha)}}{\alpha} \right| \le \frac{2 v_0}{\alpha},\]
\[\left| \gamma_{k,k-1}^{(1)} \right| = \left| \frac{-c_{0}^{(\alpha)} + c_{1}^{(\alpha)}}{\alpha} \right| \le \frac{2 v_0}{\alpha},~~~\left|\gamma_{k,k}^{(1)} \right| = \left| \frac{c_{0}^{(\alpha)}}{\alpha}\right| \le \frac{v_0}{\alpha}.\]
Suppose that $u(t) = 1$ for $t \in [0,T]$, it follows from \eqref{2.15*} that
\begin{align*}
\mathcal{H}^k (t_{k+1}) = \frac{1}{\Gamma(\alpha) \Gamma(2-\alpha)} \sum_{j=0}^{k} \gamma_{k,j}^{(0)} = \frac{1}{\Gamma(\alpha) \Gamma(2-\alpha)} \sum_{j=0}^{k} \gamma_{k,j}^{(1)} = 0,
\end{align*}
so we get $\sum\limits_{j=0}^k \gamma_{k,j}^{(0)} = 0$ and $\sum\limits_{j=0}^k \gamma_{k,j}^{(1)} = 0$. Hence the lemma is proven.
~~~\end{proof}

\subsection{Proof of Lemma \ref{lemma2.1}}

\begin{proof}
It follows from \eqref{2.11} that
\begin{align*}
&\bigg| {}_{t_k}I_t^\alpha u(t) \Big|_{t=t_{k+1}} - h^\alpha \sum_{j=0}^p \beta_j^{(p)} u(t_{k+1-j}) \bigg| \nonumber\\
= & \frac{1}{\Gamma(\alpha)} \bigg| \int_{t_k}^{t_{k+1}} (t_{k+1}-v)^{\alpha-1} \bigg[u(v) - \sum_{j=0}^p u(t_{k+1-j}) \prod_{i=0,i \ne j}^p \frac{v - t_{k+1-i}}{t_{k+1-j} - t_{k+1-i}} \bigg] dv \bigg| \nonumber\\
\le & \frac{1}{\Gamma(\alpha)} \bigg| \int_{t_k}^{t_{k+1}} (t_{k+1}-v)^{\alpha-1} u^{(p+1)}(\varsigma) \prod_{i=0}^p (v-t_{k+1-i}) dv \bigg| \nonumber\\
\le & \frac{h^{p+1} |u^{(p+1)}(\varsigma)|}{\Gamma(\alpha)} \left| \int_{t_k}^{t_{k+1}} (t_{k+1}-v)^{\alpha-1} dv \right| \le v_5 h^{\alpha+p+1} t_{k+1}^{\sigma-p-1},
\end{align*}
where $\varsigma \in (t_k,t_{k+1})$ and $v_5 > 0$ is a constant independent of $h$. This completes the proof.
\end{proof}

\subsection{Proof of Lemma \ref{lemma2.2}}

\begin{proof}
By \eqref{2.12}, we have for $k = 2,3,\ldots,N-1,~N \ge 3$ (the cases for $k = 0,1$ are easy to check, so we omit these case here) that
\begin{align*}
&\left|\mathcal{H}^k (t_{k+1}) - \mathcal{H}_0^k (t_{k+1})\right| \nonumber\\
=& \frac{1}{\Gamma(\alpha) \Gamma(1-\alpha)} \bigg| \int_{t_k}^{t_{k+1}} \frac{1}{(t_{k+1}-v)^{1-\alpha}} \sum_{j=0}^{k-1} \int_{t_j}^{t_{j+1}} \frac{u'(s) - (\mit\Pi_{1,j} u(s))'}{(v-s)^{\alpha}} ds dv \bigg| \nonumber\\
\le & \frac{1}{\Gamma(\alpha) \Gamma(1-\alpha)} \bigg| \int_{t_k}^{t_{k+1}} \frac{1}{(t_{k+1}-v)^{1-\alpha}} \int_{0}^{t_{1}} \frac{u'(s) - (\mit\Pi_{1,0} u(s))'}{(v-s)^{\alpha}} ds dv \bigg| \nonumber\\
& + \frac{1}{\Gamma(\alpha) \Gamma(1-\alpha)} \bigg| \int_{t_k}^{t_{k+1}} \frac{1}{(t_{k+1}-v)^{1-\alpha}} \sum_{j=1}^{\hat{k} - 1} \int_{t_j}^{t_{j+1}} \frac{u'(s) - (\mit\Pi_{1,j} u(s))'}{(v-s)^{\alpha}} ds dv \bigg| \nonumber\\
& + \frac{1}{\Gamma(\alpha) \Gamma(1-\alpha)} \bigg| \int_{t_k}^{t_{k+1}} \frac{1}{(t_{k+1}-v)^{1-\alpha}} \sum_{j=\hat{k}}^{k-1} \int_{t_j}^{t_{j+1}} \frac{u'(s) - (\mit\Pi_{1,j} u(s))'}{(v-s)^{\alpha}} ds dv \bigg| \nonumber\\
:= & \frac{1}{\Gamma(\alpha) \Gamma(1-\alpha)} \left(I_1 + I_2 + I_3 \right),
\end{align*}
where $\hat{k} \in (1,k)$. For $I_1$, by using the integration by parts, one gets that
\begin{align*}
& I_1 = \left| \int_{t_k}^{t_{k+1}} (t_{k+1}-v)^{\alpha-1} \int_{0}^{t_{1}} (v-s)^{-\alpha} d \big[u(s) - \mit\Pi_{1,0} u(s)\big] dv \right| \nonumber\\
= & \alpha \left| \int_{t_k}^{t_{k+1}} (t_{k+1}-v)^{\alpha-1} \int_{0}^{t_{1}} (v-s)^{-\alpha-1} \left[u(s) - \frac{s-t_1}{- t_1}u(0) - \frac{s}{t_1}u(t_1) \right] ds dv \right| \nonumber\\
= & \alpha\left| \int_{t_k}^{t_{k+1}} (t_{k+1}-v)^{\alpha-1} \int_{0}^{t_{1}} (v-s)^{-\alpha-1} \left[\frac{s-t_1}{- t_1} \int_{0}^s u'(\tau) d\tau - \frac{s}{t_1} \int_{s}^{t_1} u'(\tau) d\tau \right] ds dv \right| \nonumber\\
= & \alpha \sigma\left| \int_{t_k}^{t_{k+1}} (t_{k+1}\!-\!v)^{\alpha-1} \int_{0}^{t_{1}} (v-s)^{-\alpha-1} \left[\frac{s-t_1}{- t_1} \int_{0}^s \tau^{\sigma - 1} d\tau - \frac{s}{t_1} \int_{s}^{t_1} \tau^{\sigma - 1} d\tau \right] ds dv \right| \nonumber\\
\le & \alpha \Bigg| \int_{t_k}^{t_{k+1}} (t_{k+1} - v)^{\alpha-1} \int_{0}^{t_{1}} (v-s)^{-\alpha-1} s^{\sigma} ds dv \Bigg| \nonumber\\
&+ \alpha \Bigg| \int_{t_k}^{t_{k+1}} (t_{k+1} - v)^{\alpha-1} \int_{0}^{t_{1}} (v-s)^{-\alpha-1} (t_1^\sigma - s^{\sigma}) ds dv \Bigg|\nonumber\\
\le & 3\alpha t_1^{\sigma+1} (t_k-t_1)^{-\alpha-1}\left| \int_{t_k}^{t_{k+1}} (t_{k+1}-v)^{\alpha-1} dv \right| = 3 h^{\sigma+\alpha+1} t_{k - 1}^{-\alpha-1}.
\end{align*}
On the other hand, for $I_2$, by using the mean value theorem and the Euler-Maclaurin formula, we arrive at
\begin{align*}
&I_2 = \bigg| \int_{t_k}^{t_{k+1}} (t_{k+1}-v)^{\alpha-1} \sum_{j=1}^{\hat{k}-1} \int_{t_j}^{t_{j+1}} \frac{2s-t_j-t_{j+1}}{2(v-s)^{\alpha}} u''(\xi_j) ds dv \bigg| \nonumber\\
\le &\sigma(\sigma-1)h \bigg| \int_{t_k}^{t_{k+1}} (t_{k+1}-v)^{\alpha-1} \sum_{j=1}^{\hat{k}-1} t_j^{\sigma-2} \int_{t_j}^{t_{j+1}} (v-s)^{-\alpha} dsdv \bigg| \nonumber\\
= & \frac{\sigma(\sigma-1)h}{1-\alpha} \bigg| \int_{t_k}^{t_{k+1}} (t_{k+1}-v)^{\alpha-1} \sum_{j=1}^{\hat{k}-1} t_j^{\sigma-2} \left[(v-t_j)^{1-\alpha} - (v-t_{j+1})^{1-\alpha}\right]dv \bigg| \nonumber\\
\le & \sigma(\sigma-1)h \bigg| \int_{t_k}^{t_{k+1}} (t_{k+1}-v)^{\alpha-1} \sum_{j=1}^{\hat{k}-1} t_j^{\sigma-2} (v-t_{j+1})^{-\alpha}dv \bigg| \nonumber\\
\le & \sigma(\sigma-1)h \sum_{j=1}^{\hat{k}-1} t_j^{\sigma-2} (t_k-t_{j+1})^{-\alpha} \left| \int_{t_k}^{t_{k+1}} (t_{k+1}-v)^{\alpha-1} dv \right| \nonumber\\
\le & \frac{\sigma(\sigma-1)}{\alpha} h^{\alpha+1} t_k^{\sigma-\alpha-1},
\end{align*}
where $\xi_j \in (t_j,t_{j+1})~(j=1,2,\ldots,\hat{k} - 1)$. Next, it holds that
\begin{align*}
&I_3 = \bigg| \int_{t_k}^{t_{k+1}} (t_{k+1}-v)^{\alpha-1} \sum_{j=\hat{k}}^{k-1} \int_{t_j}^{t_{j+1}} \frac{2s-t_j-t_{j+1}}{2(v-s)^{\alpha}} u''(\xi_j) ds dv \bigg| \nonumber\\
\le &\sigma(\sigma-1)h \bigg| \int_{t_k}^{t_{k+1}} (t_{k+1}-v)^{\alpha-1} \sum_{j=\hat{k}}^{k-1} t_j^{\sigma-2} \int_{t_j}^{t_{j+1}} (v-s)^{-\alpha} dsdv \bigg| \nonumber\\
= &\frac{\sigma(\sigma-1)t_{\hat{k}}^{\sigma - 2}h}{1-\alpha} \left| \int_{t_k}^{t_{k+1}} (t_{k+1}-v)^{\alpha-1} \left[(v-t_{\hat{k}})^{1-\alpha} - (v-t_{k})^{1-\alpha}\right] dv \right| \nonumber\\
= & \frac{\sigma(\sigma-1)t_{\hat{k}}^{\sigma - 2}h}{1-\alpha} \left[(t_{k+\hat{\theta}}-t_{\hat{k}})^{1-\alpha} - (t_{k+\hat{\theta}}-t_{k})^{1-\alpha}\right] \left| \int_{t_k}^{t_{k+1}} (t_{k+1}-v)^{\alpha-1} dv \right| \nonumber\\
\le & \frac{\sigma(\sigma-1)t_{\hat{k}}^{\sigma - 2}h^{1+\alpha}}{\alpha(1-\alpha)} \left[(t_{k+\hat{\theta}}-t_{\hat{k}})^{1-\alpha} - (t_{k+\hat{\theta}}-t_{k})^{1-\alpha}\right] \nonumber\\
\le & \frac{\sigma(\sigma-1) T}{\alpha(1-\alpha)} \left[\frac{1}{(k-\hat{k}+\hat{\theta})^\alpha} - \frac{1}{\hat{\theta}^\alpha}\right]h t_{\hat{k}}^{\sigma - 2},
\end{align*}
where $\hat{\theta} \in [0,1]$, $\xi_j \in (t_j,t_{j+1})~(j=\hat{k},\hat{k}+1,\ldots,k-1)$. Then for a suitable $\hat{k}$, there exists a constant $v_6 > 0$ independent of $h$ such that
\begin{align*}
\left|\mathcal{H}^k (t_{k+1}) - \mathcal{H}_0^k (t_{k+1})\right| \le & \frac{1}{\Gamma(\alpha) \Gamma(1-\alpha)} \Bigg\{3 h^{\sigma+\alpha+1} t_{k - 1}^{-\alpha-1} + \frac{\sigma(\sigma-1)}{\alpha} h^{\alpha+1} t_k^{\sigma-\alpha-1} \nonumber\\
& + \frac{\sigma(\sigma-1) T}{\alpha(1-\alpha)} \left[\frac{1}{(k-\hat{k}+\hat{\theta})^\alpha} - \frac{1}{\hat{\theta}^\alpha} \right] h t_{\hat{k}}^{\sigma - 2} \Bigg\} \nonumber\\
\le & v_6\left(h^{\sigma+\alpha+1} t_{k + 1}^{-\alpha-1} + h^{\alpha+1} t_{k+1}^{\sigma-\alpha-1} + h\right).
\end{align*}
Similarly, we can get there exists a constant $v_7 > 0$ independent of $h$ such that
\begin{align*}
\left|\mathcal{H}^k (t_{k+1}) - \mathcal{H}_{1}^k (t_{k+1})\right| \le v_7 \left(h^{\sigma+\alpha+1} t_{k + 1}^{-\alpha-1} + h^{\alpha+2} t_{k+1}^{\sigma-\alpha-2} + h^2 \right).
\end{align*}
Therefore, when setting $C_3 = \max\left\{v_7,v_8 \right\}$, the lemma is proved.
\end{proof}

\subsection{Proof of Lemma \ref{lemma2.4}}

\begin{proof}
From Lemma \ref{lemma2.1} we know that
\begin{align*}
\frac{h^\alpha t_k^{\sigma_r}}{\Gamma(\alpha+1)} {}_2F_1\left(-\sigma_r,1;\alpha+1;-\frac{1}{k}\right) - h^\alpha \sum_{j=0}^p \beta_j^{(p)} t_{k+1-j}^{\sigma_r} = \mathcal{O}(h^{\alpha+p+1} t_{k+1}^{\sigma_r-p-1}),
\end{align*}
which is equivalent to
\begin{align*}
\frac{k^{\sigma_r}}{\Gamma(\alpha+1)} {}_2F_1\left(-\sigma_r,1;\alpha+1;-\frac{1}{k}\right) - \sum_{j=0}^p \beta_j^{(p)} (k+1-j)^{\sigma_r} = \mathcal{O}((k+1)^{\sigma_r-p-1}).
\end{align*}
Hence, by \eqref{2.20}, we have
\begin{align*}
\sum_{j=1}^{m_u} W_{k,j}^{(\alpha,\sigma,p)} j^{\sigma_r} = \mathcal{O}((k+1)^{\sigma_r-p-1}),~~~r=1,2,\ldots,m_u.
\end{align*}
Similarly, by using Lemma \ref{lemma2.1} and \ref{lemma2.2}, we can obtain
\[\sum_{j=1}^{m_f} W_{k,j}^{(\alpha,\delta,p)} j^{\delta_r} = \mathcal{O}((k+1)^{\delta_r-p-1}),~~~r=1,2,\ldots,m_f,\]
\[\sum_{j=1}^{\tilde{m}_u} \tilde{W}_{k,j}^{(\alpha,\sigma,p)} j^{\sigma_r} = \mathcal{O}((k + 1)^{-\alpha-1}) + \mathcal{O}((k+1)^{\sigma_r-p-1}),~~~r=1,2,\ldots,\tilde{m}_u.\]
Moreover, for $W_{k,j}^{(\delta,p)}$ in \eqref{3.9}, we can get that
\[\sum_{j=1}^{\tilde{m}_f} W_{k,j}^{(\delta,0)} j^{\delta_r} = (k+1)^{\delta_r} - k^{\delta_r} = \mathcal{O}((k+1)^{\delta_r - 1}),~~~r=1,2,\ldots,\tilde{m}_f,\]
\[\sum_{j=1}^{\tilde{m}_f} W_{k,j}^{(\delta,1)} j^{\delta_r} = (k+1)^{\delta_r} - 2k^{\delta_r} + (k-1)^{\delta_r} = \mathcal{O}((k+1)^{\delta_r - 2}),~~~r=1,2,\ldots,\tilde{m}_f,\]
which ends the proof.
\end{proof}

\subsection{Proof of Theorem \ref{theorem:convergence}}

\begin{proof}
Let $e_{k+1} = u(t_{k+1}) - u_{k+1}$. When $p=0$, subtracting \eqref{3.13} from \eqref{3.14} and using the Lipschitz condition \eqref{3.2} yield
\begin{align}\label{3.16}
\|e_{k+1}\|_{\infty} \le & \| e_{k}\|_{\infty} + |\lambda| h^\alpha \bigg( \beta_0^{(0)} \|e_{k+1}\|_\infty + \sum_{j=1}^{m_u} \left|W_{k,j}^{(\alpha,\sigma,0)}\right| \|e_{j} \|_{\infty} \bigg)\nonumber\\
& - \frac{1}{\Gamma(\alpha) \Gamma(2-\alpha)} \sum_{j=0}^{k} \left|\gamma_{k,j}^{(0)}\right| \left\|e_j\right\|_\infty - \sum_{j=1}^{\tilde{m}_u} \left|\tilde{W}_{k,j}^{(\alpha,\sigma,0)}\right| \|e_{j}\|_{\infty} \nonumber\\
& + L \beta_0^{(0)} h^\alpha \bigg(\|e_{k}\|_\infty + \sum_{j=1}^{\tilde{m}_f} \left|W_{k,j}^{(\delta,0)}\right| \|e_j\|_{\infty}\bigg) \nonumber\\
& + L h^\alpha \sum_{j=1}^{m_f} \left|W_{k,j}^{(\alpha,\delta,0)}\right| \|e_j\|_\infty + R_{k+1} \nonumber\\
\le & |\lambda|\beta_0^{(0)} h^\alpha \| e_{k+1} \|_{\infty} + \| e_{k}\|_{\infty} + L \beta_0^{(0)} h^\alpha \|e_{k}\|_\infty + \sum_{j=1}^M \tilde{W}_{k,j} \|e_{j}\|_{\infty} \nonumber\\
& - \frac{1}{\Gamma(\alpha) \Gamma(2-\alpha)} \sum_{j=0}^{k} \left|\gamma_{k,j}^{(0)}\right| \left\|e_j\right\|_\infty + R_{k+1},
\end{align}
where
\[\tilde{W}_{k,j} = |\lambda| h^\alpha \left|W_{k,j}^{(\alpha,\sigma,0)}\right| + L h^\alpha \left|W_{k,j}^{(\alpha,\delta,0)}\right| + \left|\tilde{W}_{k,j}^{(\alpha,\sigma,0)}\right| + L \beta_0^{(0)} h^\alpha \left|W_{k,j}^{(\delta,0)}\right|, \]
and
\[R_{k+1} = R_{k+1}^u + R_{k+1}^f + \tilde{R}_{k+1}^u + h^\alpha \beta_0^{(p)} \tilde{R}_{k+1}^f \le v_9 h^q_0,\] 
with $q_0 = \min\left\{1, \sigma_{\tilde{m}_u+1}, \sigma_{m_u+1}+\alpha, \delta_{m_f+1}+\alpha, \delta_{\tilde{m}_f+1}+\alpha \right\}$ and $v_9 > 0$ is a constant independent of $h$. It follows from Lemma \ref{lemma2.4} there exists a constant $v_{10} > 0$ such that 
\[\tilde{W}_{k,j} \le v_{10},~~~\mbox{when}~\sigma_{\tilde{m}_u} \le 1,~\sigma_{m_u},\delta_{m_f},\delta_{\tilde{m}_f} \le \alpha+1.\]
We rewrite \eqref{3.16} as
\begin{align*}
(1-|\lambda|\beta_0^{(0)} h^\alpha) \| e_{k+1} \|_{\infty} \le & (1 + L \beta_0^{(0)} h^\alpha) \|e_{k}\|_\infty + \sum_{j=1}^M \tilde{W}_{k,j} \|e_{j}\|_{\infty} \nonumber\\
& - \frac{1}{\Gamma(\alpha) \Gamma(2-\alpha)} \sum_{j=0}^{k} \left|\gamma_{k,j}^{(0)}\right| \left\|e_j\right\|_\infty + R_{k+1}.
\end{align*}
Since by using Lemma \ref{lemma2.3} we have
\[\sum_{j=0}^{k} \left|\gamma_{k,j}^{(0)} \right| = 2 \gamma_{k,k}^{(0)} \le 2C_1.\]
Then, when $|\lambda|\beta_0^{(p)} h^\alpha < 1$, we can obtain that
\begin{align*}
\| e_{k+1} \|_{\infty} \le & \frac{1}{1\!-\!|\lambda|\beta_0^{(0)} h^\alpha} \exp\left[1 \!+\! L \beta_0^{(0)} h^\alpha \!-\! \frac{2C_1}{\Gamma(\alpha) \Gamma(2\!-\!\alpha)}\right]\! \left(\sum_{j=1}^M \tilde{W}_{k,j} \|e_{j}\|_{\infty} \!+\! R_{k+1} \right) \nonumber\\
\le & v_{11} \left(\sum_{j=1}^M \tilde{W}_{k,j} \|e_{j}\|_{\infty} + R_{k+1} \right) \le v_{11} \left( v_9 \sum_{j=1}^M \|e_{j}\|_{\infty} + v_{10} h^{q_0} \right),
\end{align*}
where the discrete Gronwall inequality in \cite{Heywood1990} has been used. For $p=1$, we can also obtain that
\begin{align*}
\| e_{k+1} \|_{\infty} \le v_{12} \left(\sum_{j=1}^M \|e_{j}\|_{\infty} + h^{q_1} \right),
\end{align*}
with $q_1 = \min\left\{2, \sigma_{\tilde{m}_u+1} + 1, \sigma_{m_u+1}+\alpha + 1, \delta_{m_f+1}+\alpha + 1, \delta_{\tilde{m}_f+1}+\alpha + 1 \right\}$ and $v_{12}$ is a positive constant independent of $h$. Therefore, this completes the proof.
\end{proof}



%

%
%



%
%
%

\bibliographystyle{siamplain}
\bibliography{SC_references}

\end{document}